\documentclass[12pt]{article}
\usepackage{amsfonts}
\usepackage{amsmath, amsthm, full page, tikz-cd, comment,
amssymb, amscd, graphicx, enumitem, stmaryrd, enumitem%, showkeys
}
\usepackage{xcolor}
\allowdisplaybreaks
\newtheorem{thm}{Theorem}[section]
\newtheorem{defn}[thm]{Definition}
\newtheorem{prop}[thm]{Proposition}

\newtheorem{rema}[thm]{Remark}

\newcommand{\nn}{\nonumber \\}

 \newcommand{\res}{\mbox{\rm Res}}

\newcommand{\wt}{\mbox{\rm wt}\,}
\newcommand{\swt}{\mbox{\rm {\scriptsize wt}}\,}

\newcommand{\Y}{\mathcal{Y}}
\newcommand{\C}{\mathbb{C}}
\newcommand{\Z}{\mathbb{Z}}
\newcommand{\R}{\mathbb{R}}

\newcommand{\N}{\mathbb{N}}

\renewcommand{\i}{\mathbf{i}}

\renewcommand{\l}{\llfloor}
\renewcommand{\r}{\rrfloor}

 \makeatletter
\newlength{\@pxlwd} \newlength{\@rulewd} \newlength{\@pxlht}
\catcode`.=\active \catcode`B=\active \catcode`:=\active \catcode`|=\active
\def\sprite#1(#2,#3)[#4,#5]{
   \edef\@sprbox{\expandafter\@cdr\string#1\@nil @box}
   \expandafter\newsavebox\csname\@sprbox\endcsname
   \edef#1{\expandafter\usebox\csname\@sprbox\endcsname}
   \expandafter\setbox\csname\@sprbox\endcsname =\hbox\bgroup
   \vbox\bgroup
  \catcode`.=\active\catcode`B=\active\catcode`:=\active\catcode`|=\active
      \@pxlwd=#4 \divide\@pxlwd by #3 \@rulewd=\@pxlwd
      \@pxlht=#5 \divide\@pxlht by #2
      \def .{\hskip \@pxlwd \ignorespaces}
      \def B{\@ifnextchar B{\advance\@rulewd by \@pxlwd}{\vrule
         height \@pxlht width \@rulewd depth 0 pt \@rulewd=\@pxlwd}}
      \def :{\hbox\bgroup\vrule height \@pxlht width 0pt depth
0pt\ignorespaces}
      \def |{\vrule height \@pxlht width 0pt depth 0pt\egroup
         \prevdepth= -1000 pt}
   }
\def\endsprite{\egroup\egroup}
\catcode`.=12 \catcode`B=11 \catcode`:=12 \catcode`|=12\relax
\makeatother

\makeatletter
\newcommand{\raisemath}[1]{\mathpalette{\raisem@th{#1}}}
\newcommand{\raisem@th}[3]{\raisebox{#1}{$#2#3$}}
\makeatother

\makeatletter%subalign
\newcommand{\subalign}[1]{%
	\vcenter{%
		\Let@ \restore@math@cr \default@tag
		\baselineskip\fontdimen10 \scriptfont\tw@
		\advance\baselineskip\fontdimen12 \scriptfont\tw@
		\lineskip\thr@@\fontdimen8 \scriptfont\thr@@
		\lineskiplimit\lineskip
		\ialign{\hfil$\m@th\scriptstyle##$&$\m@th\scriptstyle{}##$\hfil\crcr
			#1\crcr
		}%
	}%
}
\makeatother

\def\hboxtr{\FormOfHboxtr} % Only necessary if
%\kern... is wanted
\sprite{\FormOfHboxtr}(25,25)[0.5 em, 1.2 ex] % Resolution ca. 200x340 dpi.

:BBBBBBBBBBBBBBBBBBBBBBBBB |
:BB......................B |
:B.B.....................B |
:B..B....................B |
:B...B...................B |
:B....B..................B |
:B.....B.................B |
:B......B................B |
:B.......B...............B |
:B........B..............B |
:B.........B.............B |
:B..........B............B |
:B...........B...........B |
:B............B..........B |
:B.............B.........B |
:B..............B........B |
:B...............B.......B |
:B................B......B |
:B.................B.....B |
:B..................B....B |
:B...................B...B |
:B....................B..B |
:B.....................B.B |
:B......................BB |
:BBBBBBBBBBBBBBBBBBBBBBBBB |

\endsprite

 % Only necessary if
%\kern... is wanted
\sprite{\FormOfShboxtr}(25,25)[0.3 em, 0.72 ex] % Resolution ca. 200x340 dpi.

:BBBBBBBBBBBBBBBBBBBBBBBBB |
:BB......................B |
:B.B.....................B |
:B..B....................B |
:B...B...................B |
:B....B..................B |
:B.....B.................B |
:B......B................B |
:B.......B...............B |
:B........B..............B |
:B.........B.............B |
:B..........B............B |
:B...........B...........B |
:B............B..........B |
:B.............B.........B |
:B..............B........B |
:B...............B.......B |
:B................B......B |
:B.................B.....B |
:B..................B....B |
:B...................B...B |
:B....................B..B |
:B.....................B.B |
:B......................BB |
:BBBBBBBBBBBBBBBBBBBBBBBBB |

\endsprite

\title{ {\bf  Cofiniteness and $P(z)$-tensor product bifunctors  
in orbifold theories associated to  abelian but not-necessarily-finite groups} }
\date{}
\author{Yi-Zhi Huang}
%\pagestyle{myheadings}
%\pagenumbering{arabic}

\begin{document}

\bibliographystyle{alpha}
\maketitle
\begin{abstract}
Let $V$ be a M\"{o}bius vertex algebra and $G$ an abelian group of automorphisms of 
$V$. We construct $P(z)$-tensor product bifunctors for 
the category of $C_{n}$-cofinite grading-restricted generalized 
$g$-twisted $V$-modules (without $g$-actions) for $g\in G$
and the category of  $C_{n}$-cofinite grading-restricted generalized 
$g$-twisted $V$-modules with $G$-actions for $g\in G$. In this paper, an automorphism $g$ of 
$V$ can be of infinite order and does not have to act  semisimply on $V$, and the group $G$ can be an infinite
abelian group containing nonsemisimple automorphisms of $V$. 
\end{abstract}

\renewcommand{\theequation}{\thesection.\arabic{equation}}
\renewcommand{\thethm}{\thesection.\arabic{thm}}
\setcounter{thm}{0}
\setcounter{equation}{0}
\section{Introduction}

Two-dimensional orbifold conformal field theories play an important role in the construction,
classification and applications of two-dimensional conformal field theories. 
The moonshine module $V^{\natural}$ constructed in mathematics by 
Frenkel, Lepowsky and Meurman \cite{FLM1} \cite{FLM2} \cite{FLM3}
is the first example of 
two-dimensional orbifold conformal field theories. The systematic study of two-dimensional 
orbifold conformal field theories in physics was started by Dixon, 
Harvey, Vafa, and Witten  
\cite{DHVW1} \cite{DHVW2}. In \cite{H-log-twisted-mod}, the author introduced 
$g$-twisted modules for a vertex operator algebra $V$ 
for an automorphism of $V$ of infinite order. 
In \cite{H-orbifold}, the author initiated a program to construct 
general two-dimensional orbifold conformal field theories starting from a vertex operator
algebra $V$ and a group $G$ of automorphisms of $V$. Here the group $G$ is not necessarily 
finite abelian. 
A number of conjectures and open problems,
including those on the convergence, associativity, commutativity, modular invariance 
of twisted intertwining operators, and on the corresponding $G$-crossed 
braided tensor category structures were formulated and discussed in \cite{H-orbifold}. 

Recently,  progress in this program have been made  by Du and the author \cite{DH} on 
twisted intertwining operators and $P(z)$-tensor product bifunctors for categories of suitable 
twisted modules for a vertex operator algebra $V$ under certain
assumptions (see below for more discussions on these assumptions,
by Tan on the cofiniteness of suitable twisted $V$-modules \cite{T2} and on differential equations 
satisfied by products of twisted intertwining operators in the mostly untwisted case \cite{T1} 
and in the finite abelian case \cite{T3}, and 
by Du on the associativity of twisted intertwining operators under suitable 
convergence and extension assumptions \cite{D}. The present paper is also one step in this program. 

We want to emphasize the importance of $P(z)$-tensor product bifunctors. 
In the untwisted case, the braided and even modular tensor 
category structures has been constructed on suitable  categories 
of modules for a vertex operator algebra satisfying certain conditions
(see \cite{tensor1}--\cite{tensor3}, \cite{tensor4}, \cite{H-rigidity},
\cite{H-finite-length}, 
\cite{HLZ1}--\cite{HLZ8}, \cite{H-C1-vtc}). However, from these braided or even modular tensor 
category structures, we cannot reconstruct the vertex operator algebra.
For example, the modular tensor category for the moonshine module 
$V^{\natural}$ is equivalent to the tensor category of finite-dimensional 
vector spaces over $\C$. It is impossible to reconstruct the moonshine module from 
this trivial tensor category. On the other hand, the constructions in 
\cite{tensor1}--\cite{tensor3}, \cite{tensor4}, \cite{H-rigidity}, \cite{H-finite-length}, 
\cite{HLZ1}--\cite{HLZ8}, and \cite{H-C1-vtc} give vertex tensor categories (see 
\cite{HL-vtc} for a definition of vertex tensor category). It is a conjecture 
that one can reconstruct vertex operator algebras from vertex tensor categories. 
In the twisted case,  it is conjectured in \cite{H-orbifold} that the category of grading-restricted generalized 
$g$-twisted modules for a $C_{2}$-cofinite vertex operator algebra of positive energy and 
for $g$ in a finite group $G$ of automorphisms has a natural $G$-crossed braided tensor category
structure  in the sense of 
of \cite{Tu}. To prove this conjecture, we have to construct  first a $G$-crossed vertex tensor category
structure and then use the same procedure as in the untwisted case to obtain the 
$G$-crossed braided tensor category structure. 
$P(z)$-tensor product bifunctors
are one set of the most important data of a $G$-crossed vertex tensor category.

In \cite{DH},  $P(z)$-tensor product bifunctors for suitable categories of grading-restricted 
generalized $V$-modules are constructed based on suitable assumptions
(see Assumption 4.4 in \cite{DH}).  The results in \cite{DH} are twisted generalizations of 
the results in the untwisted case in \cite{tensor1}, \cite{tensor3}, \cite{HLZ3} and \cite{HLZ4}.
Theorem 4.8 and Corollary 4.9 in \cite{DH} give some strong conditions 
on categories of twisted $V$-modules such that Assumption 4.4 in \cite{DH} holds for such categories. 
But these conditions are very strong and are not easy to verify. 
We would like to find conditions on a vertex operator algebra $V$ and categories of suitable twisted modules 
that are easy to verify or have been verified in the literature such that $P(z)$-tensor product bifunctors
exist in the category. 

In this paper, we construct $P(z)$-tensor product bifunctors on 
categories of  suitable twisted modules satisfying some cofiniteness conditions
in the case that the automorphisms
involved commute but might be of infinite orders. 
Instead of trying to generalize the conditions in Theorem 4.8 and Corollary 4.9 in \cite{DH},
in this paper, we prove generalizations and analogues of some results 
in \cite{H-C1-vtc} for lower-bounded generalized twisted $V$-modules 
and twisted intertwining operators among such $g$-twisted $V$-modules for $g$
 in an abelian group $G$ of automorphisms 
of $V$.  In \cite{H-C1-vtc}, a precisely formulated generalization of an inequality of Nahm \cite{N} is proved and 
$P(z)$-tensor rpoduct bifunctors are constructed 
using some results in  \cite{HLZ3} and \cite{HLZ4} and this inequality. In fact, in \cite{N}, Nahm introduced 
a notion of quasi-rational module for a $\mathcal{W}$-algebra.  In mathematics, 
$\mathcal{W}$-algebras are suitable vertex operator algebras and 
quasi-rational modules means $C_{1}$-cofinite modules. 
For a suitable module $W$ for a vertex operator aLgebra $V$ with the vertex operator map $Y_{W}$,
the $C_{1}$-subspace  $C_{1}(W)$ of $W$ is the subspace of $V$ spanned by 
$\res_{x}x^{-1}Y_{W}(v, x)w$ for $v\in V_{+}=\coprod_{n\in \Z_{+}}V_{(n)}$ and $w\in W$.
The $V$-module $W$ is $C_{1}$-cofinite if $\dim W/C_{1}(W)<\infty$.
Assuming that  a suitable fusion product $W_{12}$ 
of two $C_{1}$-cofinite $V$-modules $W_{1}$ and $W_{2}$ 
exists, Nahm derived in \cite{N}  an inequality
\begin{equation}\label{Nahm-ineq}
\dim (W_{12}/C_{1}(W_{12}))
\le \dim (W_{1}/C_{1}(W_{1})) \dim (W_{2}/C_{1}(W_{2})).
\end{equation}
Nahm did not give a construction of a fusion product in \cite{N}. 
Instead, he derived the inequality \eqref{Nahm-ineq} from some basic 
physical assumptions on conformal field theories. A generalization of 
the inequality \eqref{Nahm-ineq} to surjective intertwining operators is formulated precisely and 
proved mathematically  in \cite{H-C1-vtc}. In particular, if 
$W_{12}$ is taken to be the $P(z)$-tensor product $W_{1}\boxtimes_{P(z)}W_{2}$
constructed in  \cite{H-C1-vtc}, the inequality \eqref{Nahm-ineq} holds. 
In \cite{YZ}, Yang and Zhu proved a twisted Nahm inequality for 
quotients of lower-bounded generalized twisted $V$-modules 
by their $C_{2}$-subspaces\footnote{In \cite{YZ},  for a lower-bounded generalized 
$g$-twisted $V$-module for finite-order $g$, a subspace $C_{1}(W)$ of $W$ is introduced. In fact, $C_{1}(W)$ in \cite{YZ}
is $C_{2}(W)$ in \cite{T1} and  the present paper, and $C_{1}$-cofiniteness in \cite{YZ} is 
called $C_{2}$-cofiniteness in \cite{T1} and the present paper. There is also a notion of $C_{1}$-cofiniteness in 
\cite{T1}. Note that 
by the main result proved by Tan in \cite{T1},  finitely generated lower-bounded generalized twisted $V$-modules 
are $C_{2}$-cofinite in the sense of \cite{T1} and the present paper if $V$ is $C_{2}$-cofinite and of positive energy. 
Because of this result
and the examples of twisted modules for the Heisenberg vertex operator algebras,
in the author's opinion,  $C_{2}(W)$ and $C_{2}$-cofinite are the correct notation and term to use.}
 (see below for the definition)
in the case that the automorphisms involved commute and are of finite order. 
Using this inequality, it is proved in \cite{YZ} that the category of $C_{2}$-cofinite 
lower-bounded generalized twisted $V$-modules twisted by commuting automorphisms  of finite orders 
is closed under a suitable fusion product. 

In this paper, we construct $P(z)$-tensor product bifunctors for categories of 
$C_{n}$-cofinite grading-restricted generalized $g$-twisted $V$-modules for 
$g$ in an abelain group $G$ of automorphisms of $V$. The group $G$ does not have
to be finite and can contain automorphisms that do not act semisimply on $V$. 
Our method is the same as the one in \cite{H-C1-vtc}. 

Here we give more detailed descriptions of our results. 
Let $V$ be a grading-restricted vertex algebra. For an automorphism $g$ of $V$,
we have $V=\coprod_{\alpha\in P_{V}^{g}}V^{[\alpha]}$, where $P_{V}^{g}\subset [0, 1)+\i\R\subset \C$
and $V^{[\alpha]}$ for $\alpha\in P_{V}^{g}$ is the generalized eigenspace of $g$ with 
the eigenvalue $e^{2\pi \i \alpha}$. For $n\in 2+\N$ and a generalized $g$-twisted $V$-module 
$W$ with the twisted vertex operator map $Y_{W}$, let the $C_{n}$-subspace $C_{n}(W)$ 
of $W$ be the subspace of 
$W$ spanned by elements of the form 
$\res_{x}x^{\alpha-n}Y_{W}(v, x)w$ for $v\in V^{[\alpha]}$, $\alpha\in P_{V}^{g}$, and $w\in W$.  Let
$g_{1}$ and $g_{2}$ be 
commuting automorphisms of $V$,  $W_{1}$ and $W_{2}$ lower-bounded generalized 
$g_{1}$- and $g_{2}$-twisted $V$-modules, respectively, and
$W_{3}$ a generalized $g_{1}g_{2}$-twisted $V$-module. Assuming that 
there exists a  surjective twisted intertwining operator of type $\binom{W_{3}}{W_{1}W_{2}}$,
we prove 
\begin{equation}\label{twisted-Cn-ineq}
\dim (W_{3}/C_{\min(p, q)}(W_{3}))\le \dim (W_{1}/C_{p}(W_{1}))
\dim (W_{2}/C_{q}(W_{2}))
\end{equation}
 for $p, q\in 2+\N$.  Here $g_{1}$ and $g_{2}$ can be of infinite order and do not have to act
semisimply on $V$. 
This inequality is proved using the same method as in \cite{H-C1-vtc}, but
with the Jacobi identity for (untwisted) intertwining operators  in \cite{H-C1-vtc} replaced by 
a Jacobi identity for twisted intertwining operators
in the case that the automorphisms of $V$ involved commute with each other. 

For a generalized twisted $V$-module, we say that
$W$ is $C_{n}$-cofinite if $\dim W/C_{n}(W)<\infty$. We now assume that $V$ has in addition  a M\"{o}bius vertex 
algebra or quasi-vertex operator algebra structure (see \cite{FHL} and \cite{HLZ1}). In particular, we have 
an operator $L_{V}(1)$ on $V$ and for a lower-bounded generalized twisted $V$-mdoule $W$, 
we have a contragredient $W'$ (see 
 \cite{H-twisted-int}). 
Let $G$ be an abelian group of automorphisms of $V$. 
Using some results in \cite{DH},  the inequality above  in the case $p=q=n\in 2+\N$, and other results proved in this paper
on $C_{n}$-cofinite lower-bounded generalized twisted $V$-modules and twisted intertwining operators, 
we construct $P(z)$-tensor product bifunctors for the category of $C_{n}$-cofinite grading-restricted generalized 
$g$-twisted $V$-modules (without $g$-actions) for $g\in G$
and the category of  $C_{n}$-cofinite grading-restricted generalized 
$g$-twisted $V$-modules with $G$-actions for $g\in G$. 
Though the second category is in fact a subcategory of the first category, 
the $P(z)$-tensor product bifunctor for the second category is not the restriction of 
the $P(z)$-tensor product bifunctor for the first category to the second tensor category. 
We also emphasize again that 
in this paper, an automorphism $g$ of 
$V$ can be of infinite order and does not have to act semisimply on $V$, and the group $G$ can be an infinite
abelian group containing nonsemisimple automorphisms of $V$.

However, even in the abelian case discussed in this paper, 
the results and the method in \cite{H-C1-vtc} cannot be generalized directly to obtain results on 
$C_{1}$-cofinite lower-bounded generalized twisted $V$-modules (see, for example, 
\cite{T1} for a definition of $C_{1}$-cofinite twisted modules) and the corresponding 
twisted intertwining operators. So 
the $C_{1}$-cofinite case is still an open problem.

Since Tan proved that for a $C_{2}$-cofinite vertex operator algebra $V$ of  positive energy,
finitely-generated weak twisted  $V$-modules are $C_{n}$-cofinite for $n\ge 2$, the construction
of the $P(z)$-tensor product bifunctors in this paper can be applied to module categories 
for many vertex operator algebras 
that have been proved to be $C_{2}$-cofinite. One important class of examples 
is  the simple affine  vertex operator algebras 
at positive integral levels. They are $C_{2}$-cofinite and the Lie groups corresponding to the finite-dimensional 
simple Lie algebras are groups of automorphisms of these affine vertex operator algebras. 
The results of the present paper  can be applied to such a vertex operator algebra 
and an infinite abelian subgroup of the corresponding Lie group. Another class of examples is 
related to $\mathcal{W}$-algebras obtained as the kernels of suitable screening operators. 
In fact, exponentials of screening operators are automorphisms of infinite orders of suitable vertex operator algebras 
and the kernels of these screening operators are exactly the fixed point subalgebras 
of the vertex operator algebras that one starts with (see \cite{H-log-twisted-mod}). The results 
of the present paper can be used to initiate a study of  
such $\mathcal{W}$-algebras using the orbifold theory associated to infinite abelian groups. 

The present paper is organized as follows: In the next section (Section 2), we give the 
definitions of various notions of twisted modules and twisted intertwining operators
among twisted modules twisted by commuting automorphisms. 
We define twisted intertwining operators using a Jacobi identity, which is equivalent to the 
duality property in \cite{DH} in this abelian case (but we will prove this equivalence in another paper). 
In  Section 3, we prove some basic properties of 
 lower-bounded generalized twisted modules. The inequality \eqref{twisted-Cn-ineq} is proved in
Section 4. The constructions of the $P(z)$-tensor product bifunctors 
are given in Section 5: In Subsection 5.1, we give the construction of 
the $P(z)$-tensor product bifunctors for the category of $C_{n}$-cofinite grading-restricted generalized 
$g$-twisted $V$-modules (without $g$-actions) for $g\in G$.
In Subsection 5.2, we give the construction of 
the $P(z)$-tensor product bifunctors for the category of $C_{n}$-cofinite grading-restricted generalized 
$g$-twisted $V$-modules with $G$-actions for $g\in G$.

In this paper, we fix a grading-restricted vertex algebra $V$.  In Section 5, we will assume in addition
that $V$ is a M\"{o}bius vertex algebra or quasi-vertex operator algebra (see \cite{FHL} and \cite{HLZ1}).
Since we sometimes use $i$ as an index, we will denote the imaginary number $\sqrt{-1}$ by $\i$. 

\paragraph{Acknowledgment} 
The author is grateful to Jishen Du and Daniel Tan for many discussions on 
twisted modules, twisted intertwining operators, and orbifold theories.

\setcounter{equation}{0}
\section{Twisted modules and twisted intertwining operators}

We give the definitions of various notions of twisted module, module maps between these twisted modules,
and twisted intertwining operators among various types of twisted modules twisted by commuting 
automorphisms of $V$ in this section. 
We define twisted intertwining operators in this case using a Jacobi identity. We then derive 
a suitable component form of the Jacobi identity that will be needed in later sections. 

For an automorphism $g$ of $V$, $V$ can be decomposed as a direct sum of generalized eigenspaces
of  $g$. An eigenvalue of $g$ on $V$ is of the form $e^{2\pi i \alpha}$
for some $\alpha\in \C$. Since $e^{2\pi i (\alpha+m)}=e^{2\pi i \alpha}$ for $m\in \Z$, we can always find 
such an $\alpha\in \C$ satisfying $\Re(\alpha)\in [0, 1)$. 

For $\alpha, \beta\in \C$ such that $\Re(\alpha), \Re(\beta)\in [0, 1)$, either $\Re(\alpha+\beta)\in [0, 1)$ or 
$\Re(\alpha+\beta-1)\in [0, 1)$. Let
$$\epsilon(\alpha, \beta)=\left\{\begin{array}{ll}0&\Re(\alpha+\beta)\in [0, 1)\\
1&\Re(\alpha+\beta-1)\in [0, 1).\end{array}\right.$$
Then for $\alpha, \beta\in \C$ such that $\Re(\alpha), \Re(\beta)\in [0, 1)$, 
we have $\Re(\alpha+\beta-\epsilon(\alpha, \beta))\in [0, 1)$.
In general, for $\alpha_{1}, \dots, \alpha_{k}\in \C$ such that $\Re(\alpha_{1}), \dots, \Re(\alpha_{k})\in [0, 1)$, 
there exists a unique 
$\epsilon(\alpha_{1}, \dots, \alpha_{k})
\in \N$ satisfying $\epsilon(\alpha_{1}, \dots, \alpha_{k})\le k-1$
 such that $\Re(\alpha_{1}+\dots +\alpha_{k}-\epsilon(\alpha_{1}, \dots, \alpha_{k}))\in [0, 1)$.
For simplicity, we also denote $\alpha_{1}+\dots +\alpha_{k}-\epsilon(\alpha_{1}, \dots, \alpha_{k})$
by $\sigma(\alpha_{1}, \dots, \alpha_{k})$. 

Let 
$$P_{V}^{g}=\{\alpha\in \C\mid 
\Re(\alpha)\in [0, 1), e^{2\pi \i \alpha}\;\text{is an eigenvalues of}\; g\}.$$
For $\alpha\in P_{V}^{g}$, we use 
$V^{[\alpha]}$ to denote the generalized eigenspace of $g$ with eigenvalue $e^{2\pi \i \alpha}$.
Then 
$$V=\coprod_{\alpha\in P_{V}^{g}}V^{[\alpha]}=\coprod_{n\in \Z}\coprod_{\alpha\in P_{V}^{g}}
V_{(n)}^{[\alpha]},$$
where $V_{(n)}^{[\alpha]}=V_{(n)}\cap V^{[\alpha]}$ for $n\in \Z$ and $\alpha\in P_{V}^{g}$. 

From Section 2 of \cite{HY} and Lemma 3.4 in \cite{H-twist-vo}, we know that there exist a semisimple operator
$\mathcal{S}_{g}$ and a local nilpotent operator $\mathcal{N}_{g}$ on $V$ such that 
$g=e^{2\pi \i(\mathcal{S}_{g}+\mathcal{N}_{g})}=e^{2\pi \i\mathcal{L}_{g}}$, 
where as in \cite{H-twist-vo}, $\mathcal{L}_{g}=\mathcal{S}_{g}+\mathcal{N}_{g}$,
 and $V^{[\alpha]}$ is the eigenspace 
of $\mathcal{S}_{g}$ with eigenvalue $\alpha$ for $\alpha\in P_{V}^{g}$. 
For $\alpha, \beta\in P_{V}^{g}$,
 $u\in V^{[\alpha]}$, and $v\in V^{[\beta]}$, since $e^{2\pi \i \mathcal{S}_{g}}$ is also an automorphism of 
$V$ by Proposition 3.5 in \cite{H-twist-vo},
 we have $e^{2\pi \i \mathcal{S}_{g}}Y_{V}(u,  x)v=Y_{V}(e^{2\pi \i \mathcal{S}_{g}}u, x)e^{2\pi \i \mathcal{S}_{g}}v
=e^{2\pi \i (\alpha+\beta)}Y_{V}(u,  x)v$. If there exist $u\in V^{[\alpha]}$, $v\in V^{[\beta]}$ such that 
$Y_{V}(u, x)v\ne 0$, then there exists $n\in \Z$ such that $u_{n}v=\res_{x}x^{n}Y_{V}(u, x)v\ne 0$ 
is a generalized eigenvector of 
$g$ with eigenvalue $e^{2\pi \i (\alpha+\beta)}$. In this case, if $\Re(\alpha+\beta)\in [0, 1)$, then
$\alpha+\beta\in P_{V}^{g}$ and if $\Re(\alpha+\beta-1)\in [0, 1)$, $\alpha+\beta-1\in P_{V}^{g}$.

Let $g_{1}$ and $g_{2}$ be automorphisms of $V$. Then we have 
$g_{1}=e^{2\pi \i(\mathcal{S}_{g_{1}}+\mathcal{N}_{g_{1}})}=e^{2\pi \i\mathcal{L}_{g_{1}}}$ and 
$g_{1}=e^{2\pi \i(\mathcal{S}_{g_{2}}+\mathcal{N}_{g_{2}})}=e^{2\pi \i\mathcal{L}_{g_{2}}}$. 
Since $g_{1}$ and $g_{2}$ commute, 
the operators $\mathcal{S}_{g_{1}}$, $\mathcal{N}_{g_{1}}$, $\mathcal{S}_{g_{2}}$,
and $\mathcal{N}_{g_{2}}$ also commute. In fact, since $g_{1}$ and $g_{2}$ commute,  $V$ 
can be decomposed as a direct sum of common generalized eigenspaces $V^{[\alpha_{1}, \alpha_{2}]}$
for $\alpha_{1}\in P_{V}^{g_{1}},  \alpha_{2}\in P_{V}^{g_{2}}$
of $g_{1}$ and $g_{2}$ with eigenvalues $e^{2\pi \i \alpha_{1}}$ and $e^{2\pi \i \alpha_{2}}$,
respectively. Then $\mathcal{S}_{g_{1}}$ and $\mathcal{S}_{g_{2}}$ act on $V^{[\alpha_{1}, \alpha_{2}]}$
as the numbers $\alpha_{1}$ and $\alpha_{2}$, respectively. In particular, 
$\mathcal{S}_{g_{1}}$ and $\mathcal{S}_{g_{2}}$ commute. On the other hand, since 
$V^{[\alpha_{1}, \alpha_{2}]}$ is invariant under $g_{1}$ and $g_{2}$, 
$\mathcal{S}_{g_{1}}$ and $\mathcal{S}_{g_{2}}$ act on $g_{1}v$ or $g_{2}v$ for $v\in V^{[\alpha_{1}, \alpha_{2}]}$
are equal to $g_{1}v$ or $g_{2}v$ multiplied by $\alpha_{1}$ and $\alpha_{2}$, respectively. 
Thus we see that $\mathcal{S}_{g_{1}}$ and $\mathcal{S}_{g_{2}}$ , $g_{1}$, and $g_{2}$  commute.
and in particular, 
 $\mathcal{S}_{g_{1}}$, $\mathcal{S}_{g_{2}}$ , $\mathcal{L}_{g_{1}}$, and $\mathcal{L}_{g_{2}}$
commute.
Then  $\mathcal{S}_{g_{1}}$, $\mathcal{S}_{g_{2}}$, $\mathcal{N}_{g_{1}}=\mathcal{L}_{g_{1}}-\mathcal{S}_{g_{1}}$,
and $\mathcal{N}_{g_{2}}=\mathcal{L}_{g_{2}}-\mathcal{S}_{g_{2}}$ commute.

We first give the definitions of various $g$-twisted $V$-modules for an automorphism
$g$ of $V$. The definitions below 
are generalizations or modifications of the definitions of suitable  $g$-twisted $V$-modules 
given in \cite{H-log-twisted-mod}, \cite{HY}, and \cite{H-twist-vo}. It is important to note that 
$g$ can be of infinite order and does not have to act  semisimply on $V$. 

\begin{defn}\label{defn-twisted-mod}
{\rm  A {\it weak  $g$-twisted 
$V$-module without a $g$-action}  is a vector space $W$
 equipped with operators $L_{W}(0)$ and $L_{W}(-1)$ on $W$,  and a linear map
\begin{eqnarray*}
Y_{W}^g: V\otimes W &\to& W\{x\}[\log  x],\\
v \otimes w &\mapsto &Y_{W}^g(v, x)w=\sum_{n\in \C}\sum_{k\in \N}
v_{n, k}x^{-n-1}(\log x)^{k}
\end{eqnarray*}
called {\it twisted vertex operator map}, satisfying the following conditions:
\begin{enumerate}

\item The {\it lower-truncation condition}: For $v\in V$ and $w\in W$, $v_{n, k}w=0$ for $\Re(n)$ sufficiently large. 

\item The {\it equivariance property}: For $p \in \mathbb{Z}$, $z
\in \mathbb{C}^{\times}$,  $v \in V$ and $w \in W$, 
$$(Y^{g}_{W})^{p + 1}(gv,
z)w = (Y^{g}_{W})^{p}(v, z)w,$$
where for $p \in \mathbb{Z}$, $(Y^{g}_{W})^{ p}(v, z)$
is the $p$-th analytic branch of $Y_{W}^g(v, x)$.

\item The {\it identity property}: For $w \in W$, $Y_{W}^g({\bf 1}, x)w
= w$.

\item The {\it Jacobi identity}: For $u, v\in V$, 
\begin{align}\label{jacobi-1}
x_0^{-1}&\delta\left(\frac{x_1 - x_2}{x_0}\right)
Y_{W}^{g}(u, x_1)
Y_{W}^{g}(v, x_2)-  x_0^{-1}\delta\left(\frac{- x_2 + x_1}{x_0}\right)
Y_{W}^{g}(v, x_2)
Y_{W}^{g}(u, x_1)\nn
&= x_1^{-1}\delta\left(\frac{x_2+x_0}{x_1}\right)
Y_{W}^{g}\left(Y_{V}\left(\left(\frac{x_2+x_0}{x_1}\right)^{\mathcal{L}_{g}}
u, x_0\right)v, x_2\right).
\end{align}

\item The {\it $L(0)$-commutator formula}: 
$$[L_{W}(0), Y_{W}(v, x)]=z\frac{d}{dx}Y_{W}(v, x)+Y_{W}(L_{V}(0)v, x)$$
for $v\in V$. 

\item The  {\it $L(-1)$-derivative property} and {\it $L(-1)$-commutator formula}: For $v \in V$,
\[
\frac{d}{dx}Y_{W}(v, x) =Y_{W}(L_{V}(-1)v, x)=[L_{W}(-1), Y_{W}(v, x)].
\]

\end{enumerate}
A {\it weak $g$-twisted $V$-module with a $g$-action} is a weak 
$g$-twisted $V$-module without a $g$-action equipped with a $\langle g\rangle$-action (where $\langle g\rangle$ 
denote the cyclic group generated by $g$) satisfying the following 
{\it $g$-compatibility condition}: The $g$-action on $W$ commutes with
$L_{W}(0)$ and $L_{W}(-1)$ and
$gY_{W}(u,x)w=Y_{W}(gu,x)gw$  for $u\in V$ and $w\in W$.
Let $G$ be a group of automorphisms of $V$ such that $g\in G$. A {\it weak $g$-twisted $V$-module with 
a $G$-action} is a weak $g$-twisted $V$-module $W$ with a $g$-action equipped with a
$G$-module structure extending the $g$-action satisfying 
 the  following {\it $G$-compatibility condition}:  The $G$-action on $W$ commutes with $L_{W}(0)$,
$L_{W}(-1)$ and $hY_{W}(u,x)w=Y_{W}(hu,x)hw$  for $h\in G$, $u\in V$ and $w\in W$.
A {\it generalized  $g$-twisted 
$V$-module without a $g$-action} is a weak $g$-twisted $V$-module  without a $g$-action
with a ${\C}$-grading
$$W = \coprod_{n \in \C}W_{[n]}$$
 (graded by 
{\it weights}) satisfying the $g$-compatibility condition above and 
the following {\it $L(0)$-grading condition}:
For $w\in W_{[n]}=\coprod_{\alpha\in P_{W}^{g}} W_{[n]}^{[\alpha]}$, $n\in \C$, there exists
$K\in \Z_{+}$ such that 
$(L_{W}(0)-n)^{K}w=0$. 
A {\it generalized  $g$-twisted 
$V$-module with a $g$-action} 
is a generalized  $g$-twisted 
$V$-module without a $g$-action with a $\C\times \C/\Z$-grading
$$W = \coprod_{n \in \C}\coprod_{\alpha\in \C/\Z} W_{[n]}^{[\alpha]}$$
 (graded by 
{\it weights} and {\it $g$-weights}) satisfying the following 
 {\it $g$-grading condition}: For $\alpha\in \C/\Z$,
$w \in W^{[\alpha]}=\coprod_{n\in \C}W_{[n]}^{[\alpha]}$, there exist $\Lambda\in \Z_{+}$ 
such that $(g-e^{2\pi \i \alpha})^{\Lambda}w=0$. 
A {\it lower-bounded  generalized $g$-twisted $V$-module with or without a $g$-action} 
is a generalized $g$-twisted
$V$-module  $W$ with or without a $g$-action, respectively, such that
$W_{[n]} = 0$ when $\Re(n)<N$  for some $N\in \Z$. A 
{\it grading-restricted generalized $g$-twisted $V$-module with or without a $g$-action}  is 
a lower-bounded generalized $g$-twisted $V$-module $W$ with or without a $g$-action, respectively,  such that  for each $n \in
\mathbb{C}$, $\dim W_{[n]}<\infty$.  A {\it quasi-finite-dimensional  generalized $g$-twisted $V$-module
with or without a $g$-action}
is a generalized $V$-module $W$ 
with or without a $g$-action, respectively, such that for $N\in \Z$,  $\dim\left(\coprod_{\Re(n) \le N}
\coprod_{\alpha\in \C/\Z} W_{[n]}^{[\alpha]}\right)<\infty$. For a group $G$ of automorphisms 
of $V$ such that $g\in G$,  
a {\it generalized  $g$-twisted 
$V$-module with a $G$-action}, a {\it lower-bounded  generalized $g$-twisted $V$-module 
with  a $G$-action},  a {\it grading-restricted generalized $g$-twisted $V$-module with a $G$-action},
and a {\it quasi-finite-dimensional  generalized $g$-twisted $V$-module
with a $G$-action} are defined by combining the definitions above 
accordingly. }
\end{defn}

We also need to define $V$-module maps between weak and generalized
$g$-twisted modules without  $g$-actions, with $g$-actions, or with $G$-actions
 for a fixed automorphism $g$ of $V$. 

\begin{defn}
{\rm For weak $g$-twisted $V$-modules  $W_{1}$ and $W_{2}$ without $g$-actions, a {\it $V$-module map from $W_{1}$ to 
$W_{2}$} is a linear map$ f: W_{1}\to W_{2}$ such that $f\circ Y_{W_{1}}(v, x)=Y_{W_{2}}(v, x)\circ f$,
$f\circ L_{W_{1}}(0)=L_{W_{2}}(0)\circ f$, and  $f\circ L_{W_{1}}(-1)=L_{W_{2}}(-1)\circ f$.
For weak $g$-twisted $V$-modules  $W_{1}$ and $W_{2}$ with $g$-actions, a {\it $V$-module map from $W_{1}$ to 
$W_{2}$} is a $V$-module map $f: W_{1}\to W_{2}$
when $W_{1}$ and $W_{2}$ are viewed as 
weak $g$-twisted $V$-modules without $g$-actions satisfying the additional condition 
$f\circ g=g\circ f$. Let $G$ be a group of automorphisms of $V$ containing $g$. For 
weak $g$-twisted $V$-modules  $W_{1}$ and $W_{2}$ with $G$-actions, a {\it $V$-module map from $W_{1}$ to 
$W_{2}$} is a $V$-module map $f: W_{1}\to W_{2}$ when $W_{1}$ and $W_{2}$ are viewed as 
weak $g$-twisted $V$-modules with $g$-actions such that $f$ is also a $G$-module map.
For generalized $g$-twisted $V$-modules  $W_{1}$ and $W_{2}$ without $g$-actions, with $g$-actions 
or with $G$-actions, 
a {\it $V$-module map from $W_{1}$ to  $W_{2}$} is a $V$-module map from $W_{1}$ to 
$W_{2}$ when $W_{1}$ and $W_{2}$ are viewed as weak $g$-twisted $V$-modules without $g$-actions, with $g$-actions 
or with $G$-actions, respectively. }
\end{defn}

In this paper, we mostly study lower-bounded, grading-restricted, quasi-finite-dimensional 
$g$-twisted $V$-modules with  and without $g$-actions, and also with $G$-actions for 
a group of automorphisms of $V$. Since by definition, generalized $g$-twisted $V$-modules 
with $g$-actions and with $G$-actions are also generalized $g$-twisted $V$-modules  without $g$-actions, 
the results obtained in this paper for suitable  generalized $g$-twisted $V$-modules  without $g$-actions 
hold also for the corresponding 
generalized $g$-twisted $V$-modules with $g$-actions and with $G$-actions when $G$ is abelian.
Since a large part of the results in the paper is about suitable $g$-twisted $V$-modules without $g$-actions,
for simplicity, we will omit ``without a $g$-action'' in the term  ``a weak $g$-twisted $V$-module without a $g$-action''
to call such a twisted $V$-module simply a weak $g$-twisted $V$-module.
Similarly, we will omit ``without a $g$-action''
in the terms ``a generalized $g$-twisted $V$-module without a $g$-action,'' ``a lower-bounded 
generalized $g$-twisted $V$-module without a $g$-action,''  ``a grading-restricted 
generalized $g$-twisted $V$-module without a $g$-action,'' ``and a quasi-finite-dimensional 
generalized $g$-twisted $V$-module.'' If a result is specifically about 
suitable $g$-twisted $V$-modules with $g$- or $G$-actions, we will always 
explicitly say so by using the words ``with $g$-actions''  or ``with $G$-actions.''

For a weak $g$-twisted $V$-module $W$, let 
$$Y_{W, 0}(v, x)=\sum_{n\in \C}v_{n, 0}x^{-n-1}$$
for $v\in V$. Then by Lemma 2.3 in \cite{HY}, we have 
\begin{equation}\label{HY-lem-2.3}
Y_{W}(v, x)=Y_{W, 0}(x^{-\mathcal{N}_{g}}v, x)
\end{equation}
for $v\in V$. For simplicity, we shall also denote $v_{n, 0}$ sometimes simply by $v_{n}$ for $v\in V$ and $n\in \C$. 

For a generalized  $g$-twisted 
$V$-module 
$$W = \coprod_{n \in \C}\coprod_{\alpha\in \C/\Z} W_{[n]}^{[\alpha]}$$
with a $g$-action, let $W^{[\alpha]}=\coprod_{n\in \C}W_{[n]}^{[\alpha]}$ for $\alpha\in \C/\Z$.
If $W^{[\alpha]}\ne 0$ for $\alpha\in \C/\Z$, then $W^{[\alpha]}$ is the generalized eigenspace 
of the action of $g$ on $W$ with eigenvalue $e^{2\pi \i \alpha}$. For  $\alpha\in \C/\Z$,
we can always find a unique complex number in $\alpha$ 
such that the real part of this complex number is in $[0, 1)$. By abusing the notation, we  denote 
this number by $\alpha$ and will always use $\alpha, \beta, \gamma, \dots$ to denote such unique
representatives of elements of $\C/\Z$. For $\alpha\in \C$ satisfying $\Re(\alpha)\in [0, 1)$, 
We use $W^{[\alpha]}$ to denote the space $W^{[\alpha+\Z]}$. 
Let 
$$P_{W}^{g}=\{\alpha\in \C\mid \Re(\alpha)\in [0, 1),\; W^{[\alpha]}\ne 0\}.$$
Then we have 
$$W = \coprod_{n \in \C}\coprod_{\alpha\in P_{W}^{g}} W_{[n]}^{[\alpha]}$$

General twisted intertwining operators, especially those among twisted modules twisted 
by noncommuting automorphisms,  were introduced and studied in \cite{H-twisted-int} and \cite{DH}. 
In the noncommuting case studied in these works, one must use the complex analytic approach. 
On the other hand, in the case of twisted intertwining operators among 
twisted modules twisted 
by commuting automorphisms, twisted intertwining operators can be 
defined and studied using a Jacobi identity formulated using the formal variable approach. 
In this paper, we study only the case of commuting automorphisms and we need the Jacobi identity.
So here we give a formal variable definition of twisted intertwining operators.  
This formal variable definition can be derived from the analytic definition 
in  \cite{H-twisted-int} and \cite{DH}.  We will discuss the connection between the 
complex analytic approach and the formal variable approach in another paper.

\begin{defn}
{\rm Let $V$ be a grading-restricted vertex algebra, $g_{1}$ and $g_{2}$ commuting automorphisms of $V$,
and $W_{1}$, $W_{2}$, $W_{3}$ weak $g_{1}$-, $g_{2}$, $g_{1}g_{2}$-twisted $V$-modules,
respectively.  
A {\it twisted intertwining operator of type $\binom{W_{3}}{W_{1}W_{2}}$} is a linear map
\begin{align*}
\Y: W_{1}\otimes W_{2}&\to W_{3}\{x\}[\log x]\nn
w_{1}\otimes w_{2}&\mapsto \Y(w_{1}, x)w_{2}=\sum_{k=0}^{K}\sum_{n\in \C}\Y_{n, k}(w_{1})w_{2}x^{-n-1}(\log x)^{k}
\end{align*}
satisfying the following axioms:
\begin{enumerate}

\item The {\it lower-truncation property}: For $w_{1}\in W_{1}$, $w_{2}\in W_{2}$,
and $k=0, \dots, K$, there exists $N\in \N$ such that $\Y_{n, k}(w_{1})w_{2}=0$ when $\Re(n)> N$.

\item The {\it Jacobi identity}: For $v\in V$, $w_{1}\in W_{1}$, and $w_{2}\in W_{2}$, 
\begin{align}\label{jacobi-twisted-int-0}
&x_{0}^{-1}\delta\left(\frac{x_{1}-x_{2}}{x_{0}}\right)
Y_{W_{3}}\left(\left(\frac{x_{1}-x_{2}}{x_{0}}\right)^{\mathcal{L}_{g_{1}}}v, x_{1}\right)\Y(w_{1}, x_{2})\nn
&\quad\quad - 
x_{0}^{-1}\delta\left(\frac{x_{2}-x_{1}}{-x_{0}}\right)
\Y(w_{1}, x_{2})Y_{W_{2}}\left(e^{\pi \i\mathcal{L}_{g_{1}}}\left(\frac{x_{2}-x_{1}}{x_{0}}\right)^{\mathcal{L}_{g_{1}}}
v, x_{1}\right)\nn
&\quad=
x_{1}^{-1}\delta\left(\frac{x_{2}+x_{0}}{x_{1}}\right)
\Y\left(Y_{W_{1}}\left(\left(\frac{x_{2}+x_{0}}{x_{1}}\right)^{\mathcal{L}_{g_{2}}}v, x_{0}\right)w_{1}, x_{2}\right).
\end{align}

\item The {\it $L(0)$-commutator formula}: For $w_{1}\in W_{1}$, 
\begin{align*}
L_{W_{3}}(0)\Y(w_{1}, x)-\Y(w_{1}, x)L_{W_{2}}(0)=x\frac{d}{dx}\Y(w_{1}, x)+\Y(L_{W_{1}}(0)w_{1}, x).
\end{align*}

\item The {\it $L(-1)$-derivative property}: For $w_{1}\in W_{1}$, 
\begin{align*}
\frac{d}{dx}\Y(w_{1}, x)=\Y(L_{W_{1}}(-1)w_{1}, x)=L_{W_{3}}(-1)\Y(w_{1}, x)-\Y(w_{1}, x)L_{W_{2}}(-1).
\end{align*}
\end{enumerate}
In the case that $W_{1}$, $W_{2}$, $W_{3}$ are weak $g_{1}$-, $g_{2}$, $g_{1}g_{2}$-twisted $V$-modules
with $G$-actions when $G$ is abelian and 
contains $g_{1}$ and $g_{2}$, 
a {\it twisted intertwining operator of  type $\binom{W_{3}}{W_{1}W_{2}}$} is a 
twisted intertwining operator of  type $\binom{W_{3}}{W_{1}W_{2}}$ when 
$W_{1}$, $W_{2}$, $W_{3}$ are viewed as weak $g_{1}$-, $g_{2}$, $g_{1}g_{2}$-twisted $V$-modules
(without $g_{1}$-, $g_{2}$, $g_{1}g_{2}$-actions) satisfying the additional 
condition 
$$h\Y(w_{1}, x)w_{2}=\Y(hw_{1}, x)hw_{2}$$
for $h\in G$, $w_{1}\in W_{1}$ and $w_{2}\in W_{2}$.
For  generalized $g_{1}$-, $g_{2}$, $g_{1}g_{2}$-twisted $V$-modules $W_{1}$, $W_{2}$, and $W_{3}$,
respectively, 
a  {\it twisted intertwining operator of type $\binom{W_{3}}{W_{1}W_{2}}$} is a 
twisted intertwining operator of the same type when $W_{1}$, $W_{2}$, and $W_{3}$ are viewed as 
weak $g_{1}$-, $g_{2}$, $g_{1}g_{2}$-twisted $V$-modules, respectively. 
For  generalized $g_{1}$-, $g_{2}$, $g_{1}g_{2}$-twisted $V$-modules $W_{1}$, $W_{2}$, and $W_{3}$
with $G$-actions, respectively,
a  {\it twisted intertwining operator of type $\binom{W_{3}}{W_{1}W_{2}}$} is a 
twisted intertwining operator of the same type when $W_{1}$, $W_{2}$, and $W_{3}$ are viewed as 
weak $g_{1}$-, $g_{2}$, $g_{1}g_{2}$-twisted $V$-modules with $G$-actions, respectively. }
\end{defn}

\begin{rema}
{\rm In the second term in the Jacobi identity \eqref{jacobi-twisted-int-0}, 
we choose the value of $(-1)^{\mathcal{L}_{g_{1}}}$
to be $e^{\pi \i\mathcal{L}_{g_{1}}}$. We can also choose the value of $(-1)^{\mathcal{L}_{g_{1}}}$
to be $e^{-\pi \i\mathcal{L}_{g_{1}}}$. Different choices of the values 
corresponding to different choices of single-valued branches of the multivalued analytic function 
to which the products and iterates of twisted vertex operators and the intertwining operator converge. 
These choices are related to the crossed braiding of the tensor category to be constructed. 
See \cite{T3} for a discussion of both choices in the case that $g_{1}$ and $g_{2}$ are of finite orders.}
\end{rema}

The definition of twisted intertwining operator 
above is given using formal variables. We will need the evaluation of the 
formal series $\Y(w_{1}, x)$ for $w_{1}\in W_{1}$ at $z\in \C^{\times}$ in later sections. Since $\Y(w_{1}, x)$ 
contains nonintegral powers and the logarithm of $x$, the evaluation depends on 
a choice of the values of the logarithm of $z$. Let $l_{p}(z)=\log |z|+\i \arg z+2\pi \i p$,
where $0\le \arg z<2\pi$. We also use the notation $\log z=l_{0}(z)$. 
For $w_{1}\in W_{1}$ and $w_{2}\in W_{2}$, we have
$$\Y(w_{1}, x)w_{2}=\sum_{k=0}^{K}\sum_{n\in \C}\Y_{n, k}(w_{1})w_{2}x^{-n-1}(\log x)^{k}.$$
Then for $p\in \Z$, we define the $p$-th value $\Y^{p}(w_{1}, z)w_{2}$ of $\Y(w_{1}, x)w_{2}$ at $x=z$ to be 
$$\Y^{p}(w_{1}, z)w_{2}=\sum_{k=0}^{K}\sum_{n\in \C}\Y_{n, k}(w_{1})w_{2}e^{(-n-1)l_{p}(z)}l_{p}(z)^{k}
\in \overline{W}_{3}=\prod_{n\in \C}(W_{3})_{[n]}.$$
In the case $p=0$, we denote $\Y^{0}(w_{1}, w_{1})w_{2}$ simply by 
$\Y(w_{1}, z)w_{2}$. 

We now derive a version of the Jacobi identity expressed in terms of the components of 
the twisted vertex operators. We need this version in later sections. Recall that we use $v_{n}$ to denote 
$v_{n, 0}$ for $v\in V$ and $n\in \C$. 

\begin{prop}
Let $V$ be a grading-restricted vertex algebra, $g_{1}$ and $g_{2}$ commuting automorphisms of $V$,
$W_{1}$, $W_{2}$, $W_{3}$ weak $g_{1}$-, $g_{2}$-, $g_{1}g_{2}$-twisted $V$-modules and 
$\Y$  a twisted intertwining operator of type $\binom{W_{3}}{W_{1}W_{2}}$. Then for $\alpha_{1}\in P_{V}^{g_{1}}$, 
$\alpha_{2}\in P_{V}^{g_{2}}$, $v\in V^{[\alpha_{1}, \alpha_{2}]}$, $w_{1}\in W_{1}$, 
\begin{align}\label{jacobi-twisted-int-2}
&\sum_{k\in \N}\sum_{l\in \N}\binom{\alpha_{1}-n_{1}}{k}(-x)^{k+l}
\left(\binom{\mathcal{N}_{g_{1}}}{l}v\right)_{\alpha_{1}+\alpha_{2}-n_{1}-n_{2}-k-l}\Y(w_{1}, x)\nn
&\quad \quad-\sum_{k\in \N}\sum_{l\in \N}\binom{\alpha_{1}-n_{1}}{k}
e^{\pi \i (\alpha_{1}-n_{1}-k-l)}x^{\alpha_{1}-n_{1}-k-l}
\Y(w_{1}, x)\left(e^{\pi \i\mathcal{N}_{g_{1}}}\binom{\mathcal{N}_{g_{1}}}{l}
x^{\mathcal{N}_{g_{1}}}v\right)_{\alpha_{2}-n_{2}+k+l}\nn
&\quad=\sum_{k\in \N}\sum_{l\in \N}\binom{\alpha_{2}-n_{2}}{k}
x^{\alpha_{2}-n_{2}-k-l}
\Y\left(\left(\binom{\mathcal{N}_{g_{2}}}{l}x^{\mathcal{N}_{g_{2}}}v\right)_{\alpha_{1}-n_{1}+k+l}w_{1}, x\right).
\end{align}
\end{prop}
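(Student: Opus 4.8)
The plan is to extract the component form \eqref{jacobi-twisted-int-2} directly from the formal Jacobi identity \eqref{jacobi-twisted-int-0} by taking the appropriate residue in $x_1$ and $x_0$, exactly as one does in the untwisted case but keeping careful track of the factors $\left(\tfrac{x_1-x_2}{x_0}\right)^{\mathcal{L}_{g_1}}$, $e^{\pi\i\mathcal{L}_{g_1}}\left(\tfrac{x_2-x_1}{x_0}\right)^{\mathcal{L}_{g_1}}$, and $\left(\tfrac{x_2+x_0}{x_1}\right)^{\mathcal{L}_{g_2}}$. First I would apply $\res_{x_1}$ to \eqref{jacobi-twisted-int-0}. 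On the left-hand side the two $\delta$-functions $x_0^{-1}\delta\!\left(\tfrac{x_1-x_2}{x_0}\right)$ and $x_0^{-1}\delta\!\left(\tfrac{x_2-x_1}{-x_0}\right)$ become $x_1^{-1}$ after multiplying by a generating function and taking $\res_{x_1}$ in the standard way (using the basic $\delta$-function substitution $x_1^{-1}\delta\!\left(\tfrac{x_2+x_0}{x_1}\right)F(x_1)=x_0^{-1}\delta\!\left(\tfrac{x_1-x_2}{x_0}\right)F(x_2+x_0)$ and its companion), so that $Y_{W_3}$ and $Y_{W_2}$ get evaluated with the variable $x_1$ replaced by $x_2+x_0$ and $-x_0+x_2$ respectively; on the right-hand side $\res_{x_1}\bigl(x_1^{-1}\delta(\tfrac{x_2+x_0}{x_1})(\cdots)\bigr)$ is immediate.

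Next I would substitute $v\in V^{[\alpha_1,\alpha_2]}$, which is the common eigenspace on which $\mathcal{S}_{g_1}$ acts as $\alpha_1$ and $\mathcal{S}_{g_2}$ acts as $\alpha_2$, and write $\mathcal{L}_{g_i}=\mathcal{S}_{g_i}+\mathcal{N}_{g_i}$. Then $\left(\tfrac{x_2+x_0}{x_1}\right)^{\mathcal{L}_{g_2}}v=\left(\tfrac{x_2+x_0}{x_1}\right)^{\alpha_2}\left(\tfrac{x_2+x_0}{x_1}\right)^{\mathcal{N}_{g_2}}v$, and since $\mathcal{N}_{g_2}$ is locally nilpotent, $\left(\tfrac{x_2+x_0}{x_1}\right)^{\mathcal{N}_{g_2}}v=\sum_{l\in\N}\binom{\mathcal{N}_{g_2}}{l}\bigl(\log\tfrac{x_2+x_0}{x_1}\bigr)^{l}v$ — but actually we want to keep it in the form that expands into powers, so I would instead use the formula $Y_{W_1}(x^{\mathcal{L}_{g_2}}\,\cdot\,, x)$-type manipulations together with \eqref{HY-lem-2.3}: the key identity is that the twisted vertex operator absorbs the $\mathcal{N}$-part, i.e. $Y_{W}(v,x)=Y_{W,0}(x^{-\mathcal{N}_g}v,x)$, so that the nonintegral/log structure is repackaged into $x^{\mathcal{N}}$-shifts of the argument together with binomial coefficients $\binom{\alpha-n}{k}$ coming from expanding $(x_2+x_0)^{\alpha-n-1}$ or $(-x_0+x_2)^{\alpha-n-1}$ as a power series in $x_0$. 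After setting $x_2=x$ and $x_0$ the residue variable, comparing coefficients of $x_0^{-n_1-1}$ (with $n_1$ the component index for $W_1$) and of $x^{-n_2}$-type powers matched against the component index $n_2$ for the argument-shift, the three terms organize exactly into the double sums over $k$ (from the binomial expansion of the $\alpha-n$ power) and $l$ (from the local nilpotence expansion of $\binom{\mathcal{N}}{l}x^{\mathcal{N}}$) displayed in \eqref{jacobi-twisted-int-2}.

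The bookkeeping of the sign factor is where I would be most careful: in the middle term, $e^{\pi\i\mathcal{L}_{g_1}}\left(\tfrac{x_2-x_1}{x_0}\right)^{\mathcal{L}_{g_1}}v$ becomes $e^{\pi\i\alpha_1}e^{\pi\i\mathcal{N}_{g_1}}\left(\tfrac{x_2-x_1}{x_0}\right)^{\alpha_1}\left(\tfrac{x_2-x_1}{x_0}\right)^{\mathcal{N}_{g_1}}v$, and after the $\delta$-substitution replacing $x_1$ by $-x_0+x_2$ the factor $\left(\tfrac{x_2-x_1}{x_0}\right)^{\alpha_1}$ evaluates on a power $(-x_0+x_2)^{-n_1-1}$ to produce the $e^{\pi\i(\alpha_1-n_1-k-l)}x^{\alpha_1-n_1-k-l}$ prefactor (the $e^{\pi\i(\cdots)}$ recording the branch of $(-1)^{\alpha_1-n_1-k-l}$), which must be consistent with the convention fixed in the Remark. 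I do not expect any deep obstacle here — the result is a purely formal consequence of the defining Jacobi identity — but the main point of care is (i) keeping the two locally-nilpotent expansions ($\mathcal{N}_{g_1}$ on one side, $\mathcal{N}_{g_2}$ on the other) separate and correctly paired with the correct module's $C_n$-type indices, and (ii) matching the residue extractions so that the component indices $\alpha_1-n_1$, $\alpha_2-n_2$ on the coefficients come out exactly as written, including the shifts $\alpha_1+\alpha_2-n_1-n_2-k-l$, $\alpha_2-n_2+k+l$, $\alpha_1-n_1+k+l$ in the three terms. Once the residues and coefficient comparisons are set up, the remaining manipulations are the standard $\delta$-function identities plus the binomial expansion, so the verification is mechanical.
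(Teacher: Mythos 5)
Your proposal is correct and follows essentially the same route as the paper's proof: split $\mathcal{L}_{g_i}=\mathcal{S}_{g_i}+\mathcal{N}_{g_i}$ with $\mathcal{S}_{g_i}$ acting as the scalar $\alpha_i$ on $V^{[\alpha_1,\alpha_2]}$, use \eqref{HY-lem-2.3} to push the $\mathcal{N}$-parts into the arguments, extract components with $\res_{x_0}\res_{x_1}x_0^{-n_1}x_1^{-n_2}$ (after clearing the $x_0^{\alpha_1}x_1^{\alpha_2}$ factors), and expand the scalar powers binomially to produce the $k$-sums and the nilpotent powers to produce the $l$-sums, with the $e^{\pi\i(\alpha_1-n_1-k-l)}$ branch in the middle term coming from $e^{\pi\i\alpha_1}(x_2-x_1)^{\alpha_1-n_1}$ exactly as you describe. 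The remaining work is the mechanical bookkeeping you already identify.
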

\begin{proof}
For $\alpha_{1}\in P_{V}^{g_{1}}$
and $\alpha_{2}\in P_{V}^{g_{2}}$, let $V^{[\alpha_{1}, \alpha_{2}]}=V^{]\alpha_{1}]}\cap V^{[\alpha_{2}]}$.
Then $V^{[\alpha_{1}, \alpha_{2}]}$ the space of common eigenvectors of $g_{1}$ and $g_{2}$ with eigenvalues 
$e^{2\pi \i\alpha_{1}}$ and $e^{2\pi \i\alpha_{2}}$, respectively, and we have
$V=\coprod_{\alpha_{1}\in P_{V}^{g_{1}}, \alpha_{2}\in P_{V}^{g_{2}}}V^{[\alpha_{1}, \alpha_{2}]}$.
For $v\in V^{[\alpha_{1}, \alpha_{2}]}_{g_{1}, g_{2}}$ and $w_{1}\in W_{1}$, 
we have $\mathcal{S}_{g_{1}}v=\alpha_{1}v$ and $\mathcal{S}_{g_{2}}v=\alpha_{2}v$.
Then 
\begin{align*}
\left(\frac{x_{1}-x_{2}}{x_{0}}\right)^{\mathcal{L}_{g_{1}}}v&=\left(\frac{x_{1}-x_{2}}{x_{0}}\right)^{\alpha_{1}}
\left(\frac{x_{1}-x_{2}}{x_{0}}\right)^{\mathcal{N}_{g_{1}}}v,\\
e^{\pi \i\mathcal{L}_{g_{1}}}\left(\frac{x_{2}-x_{1}}{x_{0}}\right)^{\mathcal{L}_{g_{1}}}
v&=e^{\pi \i\alpha_{1}}\left(\frac{x_{2}-x_{1}}{x_{0}}\right)^{\alpha_{1}}
e^{\pi \i\mathcal{N}_{g_{1}}}\left(\frac{x_{2}-x_{1}}{x_{0}}\right)^{\mathcal{N}_{g_{1}}}v,\\
\left(\frac{x_{2}+x_{0}}{x_{1}}\right)^{\mathcal{L}_{g_{2}}}v
&=\left(\frac{x_{2}+x_{0}}{x_{1}}\right)^{\alpha_{2}}\left(\frac{x_{2}+x_{0}}{x_{1}}\right)^{\mathcal{N}_{g_{2}}}v.
\end{align*}
Using these formulas, we see that for $v\in V^{[\alpha_{1}, \alpha_{2}]}_{g_{1}, g_{2}}$ and $w_{1}\in W_{1}$, 
the Jacobi identity \eqref{jacobi-twisted-int-0} becomes
\begin{align}\label{jacobi-twisted-int-0.1}
&x_{0}^{-1}\delta\left(\frac{x_{1}-x_{2}}{x_{0}}\right)\left(\frac{x_{1}-x_{2}}{x_{0}}\right)^{\alpha_{1}}
Y_{W_{3}}\left(\left(\frac{x_{1}-x_{2}}{x_{0}}\right)^{\mathcal{N}_{g_{1}}}v, x_{1}\right)\Y(w_{1}, x_{2})\nn
&\quad\quad - 
x_{0}^{-1}\delta\left(\frac{x_{2}-x_{1}}{-x_{0}}\right)e^{\pi \i\alpha_{1}}\left(\frac{x_{2}-x_{1}}{x_{0}}\right)^{\alpha_{1}}
\Y(w_{1}, x_{2})Y_{W_{2}}\left(e^{\pi \i\mathcal{N}_{g_{1}}}\left(\frac{x_{2}-x_{1}}{x_{0}}\right)^{\mathcal{N}_{g_{1}}}
v, x_{1}\right)\nn
&\quad=x_{1}^{-1}\delta\left(\frac{x_{2}+x_{0}}{x_{1}}\right)\left(\frac{x_{2}+x_{0}}{x_{1}}\right)^{\alpha_{2}}
\Y\left(Y_{W_{1}}\left(\left(\frac{x_{2}+x_{0}}{x_{1}}\right)^{\mathcal{N}_{g_{2}}}v, x_{0}\right)w_{1}, x_{2}\right).
\end{align}
By \eqref{HY-lem-2.3}, we have 
\begin{align*}
Y_{W_{3}}\left(\left(\frac{x_{1}-x_{2}}{x_{0}}\right)^{\mathcal{N}_{g_{1}}}v, x_{1}\right)
&=Y_{W_{3}; 0}((1-x_{1}^{-1}x_{2})^{\mathcal{N}_{g_{1}}}x_{0}^{-\mathcal{N}_{g_{1}}}
x_{1}^{-\mathcal{N}_{g_{2}}}v, x_{1}),\\
Y_{W_{2}}\left(e^{\pi \i\mathcal{N}_{g_{1}}}\left(\frac{x_{2}-x_{1}}{x_{0}}\right)^{\mathcal{N}_{g_{1}}}
v, x_{1}\right)
&=Y_{W_{2}, 0}(e^{\pi \i\mathcal{N}_{g_{1}}}(1-x_{1}x_{2}^{-1})^{\mathcal{N}_{g_{1}}}
x_{0}^{-\mathcal{N}_{g_{1}}}
x_{1}^{-\mathcal{N}_{g_{2}}}x_{2}^{\mathcal{N}_{g_{1}}}v, x_{1}),\\
Y_{W_{1}}\left(\left(\frac{x_{2}+x_{0}}{x_{1}}\right)^{\mathcal{N}_{g_{2}}}v, x_{0}\right)
&=Y_{W_{1}, 0}((1+x_{0}x_{2}^{-1})^{\mathcal{N}_{g_{2}}}x_{0}^{-\mathcal{N}_{g_{1}}}x_{1}^{-\mathcal{N}_{g_{2}}}
x_{2}^{\mathcal{N}_{g_{2}}}v, x_{0})
\end{align*}
Using these formulas, and replacing $v$ by $x_{0}^{\mathcal{N}_{g_{1}}}x_{1}^{\mathcal{N}_{g_{2}}}v$,
we see that \eqref{jacobi-twisted-int-0.1} becomes 
\begin{align}\label{jacobi-twisted-int-0.2}
&x_{0}^{-1}\delta\left(\frac{x_{1}-x_{2}}{x_{0}}\right)\left(\frac{x_{1}-x_{2}}{x_{0}}\right)^{\alpha_{1}}
Y_{W_{3}; 0}((1-x_{1}^{-1}x_{2})^{\mathcal{N}_{g_{1}}}
v, x_{1})\Y(w_{1}, x_{2})\nn
&\quad\quad - 
x_{0}^{-1}\delta\left(\frac{x_{2}-x_{1}}{-x_{0}}\right)e^{\pi \i\alpha_{1}}\left(\frac{x_{2}-x_{1}}{x_{0}}\right)^{\alpha_{1}}
\Y(w_{1}, x_{2})Y_{W_{2}, 0}(e^{\pi \i\mathcal{N}_{g_{1}}}(1-x_{1}x_{2}^{-1})^{\mathcal{N}_{g_{1}}}
x_{2}^{\mathcal{N}_{g_{1}}}v, x_{1})\nn
&\quad=x_{1}^{-1}\delta\left(\frac{x_{2}+x_{0}}{x_{1}}\right)\left(\frac{x_{2}+x_{0}}{x_{1}}\right)^{\alpha_{2}}
\Y(Y_{W_{1}, 0}((1+x_{0}x_{2}^{-1})^{\mathcal{N}_{g_{2}}}x_{2}^{\mathcal{N}_{g_{2}}}v, x_{0})
w_{1}, x_{2}).
\end{align}
Multiplying $x_{0}^{\alpha_{1}}x_{1}^{\alpha_{2}}$ to both sides of \eqref{jacobi-twisted-int-0.1},
we obtain the following version of the Jacobi identity:
\begin{align}\label{jacobi-twisted-int}
&x_{0}^{-1}\delta\left(\frac{x_{1}-x_{2}}{x_{0}}\right)(x_{1}-x_{2})^{\alpha_{1}}x_{1}^{\alpha_{2}}
Y_{W_{3}; 0}((1-x_{1}^{-1}x_{2})^{\mathcal{N}_{g_{1}}}v, x_{1})
\Y(w_{1}, x_{2})\nn
&\quad\quad - 
x_{0}^{-1}\delta\left(\frac{x_{2}-x_{1}}{-x_{0}}\right)e^{\pi \i \alpha_{1}}(x_{2}-x_{1})^{\alpha_{1}}x_{1}^{\alpha_{2}}
\Y(w_{1}, x_{2})Y_{W_{2}, 0}(e^{\pi \i\mathcal{N}_{g_{1}}}(1-x_{1}x_{2}^{-1})^{\mathcal{N}_{g_{1}}}
x_{2}^{\mathcal{N}_{g_{1}}}v, x_{1})\nn
&\quad=
x_{1}^{-1}\delta\left(\frac{x_{2}+x_{0}}{x_{1}}\right)x_{0}^{\alpha_{1}}(x_{2}+x_{0})^{\alpha_{2}}
\Y(Y_{W_{1}, 0}((1+x_{0}x_{2}^{-1})^{\mathcal{N}_{g_{2}}}x_{2}^{\mathcal{N}_{g_{2}}}v, x_{0})w_{1}, x_{2}).
\end{align}

Taking $\res_{x_{0}}\res_{x_{1}}x_{0}^{-n_{1}}x_{1}^{-n_{2}}$ 
of the first term in the left-hand side of the Jacobi identity \eqref{jacobi-twisted-int}, we obtain
\begin{align*}
&\res_{x_{0}}\res_{x_{1}}x_{0}^{-n_{1}}x_{1}^{-n_{2}}
x_{0}^{-1}\delta\left(\frac{x_{1}-x_{2}}{x_{0}}\right)(x_{1}-x_{2})^{\alpha_{1}}x_{1}^{\alpha_{2}}
Y_{W_{3}; 0}((1-x_{1}^{-1}x_{2})^{\mathcal{N}_{g_{1}}}
v, x_{1})\Y(w_{1}, x_{2})\nn
&\quad =\res_{x_{1}}x_{1}^{-n_{2}}
(x_{1}-x_{2})^{\alpha_{1}-n_{1}}x_{1}^{\alpha_{2}}
Y_{W_{3}; 0}((1-x_{1}^{-1}x_{2})^{\mathcal{N}_{g_{1}}}
v, x_{1})\Y(w_{1}, x_{2})\nn
&\quad =\sum_{k\in \N}\sum_{l\in \N}\binom{\alpha_{1}-n_{1}}{k}
\res_{x_{1}}x_{1}^{\alpha_{1}+\alpha_{2}-n_{1}-n_{2}-k-l}(-x_{2})^{k+l}
Y_{W_{3}; 0}\left(\binom{\mathcal{N}_{g_{1}}}{l}v, x_{1}\right)\Y(w_{1}, x_{2})\nn
&\quad=\sum_{k\in \N}\sum_{l\in \N}\binom{\alpha_{1}-n_{1}}{k}(-x_{2})^{k+l}
\left(\binom{\mathcal{N}_{g_{1}}}{l}v\right)_{\alpha_{1}+\alpha_{2}-n_{1}-n_{2}-k-l}\Y(w_{1}, x_{2}).
\end{align*}
Taking $\res_{x_{0}}\res_{x_{1}}x_{0}^{-n_{1}}x_{1}^{-n_{2}}$ 
of the second term in the left-hand side of the Jacobi identity \eqref{jacobi-twisted-int}, we obtain
\begin{align*}
&-\res_{x_{0}}\res_{x_{1}}x_{0}^{-n_{1}}x_{1}^{-n_{2}}x_{0}^{-1}\delta\left(\frac{x_{2}-x_{1}}{-x_{0}}\right)
e^{\pi \i \alpha_{1}}(x_{2}-x_{1})^{\alpha_{1}}
x_{1}^{\alpha_{2}}\cdot\nn
&\quad\quad\quad\quad\quad\quad
\cdot \Y(w_{1}, x_{2})Y_{W_{2}, 0}(e^{\pi \i\mathcal{N}_{g_{1}}}(1-x_{1}x_{2}^{-1})^{\mathcal{N}_{g_{1}}}
x_{2}^{\mathcal{N}_{g_{1}}}v, x_{1})\nn
&\quad=-\res_{x_{1}}x_{1}^{-n_{2}}
e^{\pi \i (\alpha_{1}-n_{1})}(x_{2}-x_{1})^{\alpha_{1}-n_{1}}
x_{1}^{\alpha_{2}}\cdot\nn
& \quad\quad\quad\quad\quad\quad
\cdot \Y(w_{1}, x_{2})Y_{W_{2}, 0}(e^{\pi \i\mathcal{N}_{g_{1}}}(1-x_{1}x_{2}^{-1})^{\mathcal{N}_{g_{1}}}
x_{2}^{\mathcal{N}_{g_{1}}}v, x_{1})\nn
&\quad=-\sum_{k\in \N}\sum_{l\in \N}\binom{\alpha_{1}-n_{1}}{k}\res_{x_{1}}
e^{\pi \i (\alpha_{1}-n_{1})}x_{2}^{\alpha_{1}-n_{1}-k-l}
(-1)^{k+l}x_{1}^{\alpha_{2}-n_{2}+k+l}\cdot\nn
&\quad\quad\quad\quad\quad\quad
\cdot \Y(w_{1}, x_{2})Y_{W_{2}, 0}\left(e^{\pi \i\mathcal{N}_{g_{1}}}\binom{\mathcal{N}_{g_{1}}}{l}
x_{2}^{\mathcal{N}_{g_{1}}}v, x_{1}\right)\nn
&\quad=-\sum_{k\in \N}\sum_{l\in \N}\binom{\alpha_{1}-n_{1}}{k}
e^{\pi \i (\alpha_{1}-n_{1}-k-l)}x_{2}^{\alpha_{1}-n_{1}-k-l}\cdot\nn
&\quad\quad\quad\quad\quad\quad
\cdot
\Y(w_{1}, x_{2})\left(e^{\pi \i\mathcal{N}_{g_{1}}}\binom{\mathcal{N}_{g_{1}}}{l}
x_{2}^{\mathcal{N}_{g_{1}}}v\right)_{\alpha_{2}-n_{2}+k+l}.
\end{align*}
Taking $\res_{x_{0}}\res_{x_{1}}x_{0}^{-n_{1}}x_{1}^{-n_{2}}$ 
of the right-hand side of the Jacobi identity \eqref{jacobi-twisted-int}, we obtain
\begin{align*}
&\res_{x_{0}}\res_{x_{1}}x_{0}^{-n_{1}}x_{1}^{-n_{2}}
x_{1}^{-1}\delta\left(\frac{x_{2}+x_{0}}{x_{1}}\right)x_{0}^{\alpha_{1}}(x_{2}+x_{0})^{\alpha_{2}}\cdot\nn
&\quad\quad\quad\quad\quad\quad\cdot
\Y(Y_{W_{1}, 0}((1+x_{0}x_{2}^{-1})^{\mathcal{N}_{g_{2}}}x_{2}^{\mathcal{N}_{g_{2}}}v, x_{0})w_{1}, x_{2})\nn
&\quad=\res_{x_{0}}x_{0}^{\alpha_{1}-n_{1}}(x_{2}+x_{0})^{\alpha_{2}-n_{2}}
\Y(Y_{W_{1}, 0}((1+x_{0}x_{2}^{-1})^{\mathcal{N}_{g_{2}}}x_{2}^{\mathcal{N}_{g_{2}}}v, x_{0})w_{1}, x_{2})\nn
&\quad=\sum_{k\in \N}\sum_{l\in \N}\binom{\alpha_{2}-n_{2}}{k}\res_{x_{0}}x_{0}^{\alpha_{1}-n_{1}+k+l}
x_{2}^{\alpha_{2}-n_{2}-k-l}
\Y\left(Y_{W_{1}, 0}\left(\binom{\mathcal{N}_{g_{2}}}{l}x_{2}^{\mathcal{N}_{g_{2}}}v, x_{0}\right)w_{1}, x_{2}\right)\nn
&\quad=\sum_{k\in \N}\sum_{l\in \N}\binom{\alpha_{2}-n_{2}}{k}
x_{2}^{\alpha_{2}-n_{2}-k-l}
\Y\left(\left(\binom{\mathcal{N}_{g_{2}}}{l}x_{2}^{\mathcal{N}_{g_{2}}}v\right)_{\alpha_{1}-n_{1}+k+l}w_{1}, x_{2}\right).
\end{align*}
Using these calculations and \eqref{jacobi-twisted-int}, and changing $x_{2}$ to $x$, we obtain
 \eqref{jacobi-twisted-int-2}.
\end{proof}

\begin{rema}
{\rm In the special case that $g_{1}$ and $g_{2}$ acts semisimply on $V$, 
 \eqref{jacobi-twisted-int-2} becomes
\begin{align*}\label{jacobi-twisted-int-3}
&\sum_{k\in \N}\binom{\alpha_{1}-n_{1}}{k}(-x)^{k}v_{\alpha_{1}+\alpha_{2}-n_{1}-n_{2}-k}\Y(w_{1}, x)\nn
&\quad \quad -\sum_{k\in \N}\binom{\alpha_{1}-n_{1}}{k}e^{\pi \i (\alpha_{1}-n_{1}-k)}
x^{\alpha_{1}-n_{1}-k}
\Y(w_{1}, x)v_{\alpha_{2}-n_{2}+k}\nn
&\quad=\sum_{k\in \N}\binom{\alpha_{2}-n_{2}}{k}x^{\alpha_{2}-n_{2}-k}
\Y(v_{\alpha_{1}-n_{1}+k}w_{1}, x).
\end{align*}}
\end{rema}

\setcounter{equation}{0}

\section{Cofiniteness of twisted modules}

In this section, we prove some basic properties of generalized twisted $V$-modules. 

\begin{defn}
{\rm Let $W$ be a weak $g$-twisted $V$-module. For each $n \in 2+\N$, we define $C_n(W)$ to be
the subspace of $W$ spanned by the elements of the form 
$u_{\alpha-n}w$ for  $u\in V^{[\alpha]}_{+}$, $\alpha \in P_{V}^{g}$, $w \in W$. 
We say that $W$ is {\it $C_n$-cofinite} if $\dim W/C_n(W) < \infty$. }
\end{defn}

As in the case of (untwisted) weak modules, we also have the following useful fact:

\begin{prop}
Let $W$ be a weak $g$-twisted $V$-module. If $W$ is $C_{n}$-cofinite for some $n\in 2+\N$, 
then it is also $C_{m}$-cofinite for $m=2, \dots, n$. 
\end{prop}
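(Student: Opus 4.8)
The plan is to show the inclusion $C_{m}(W) \supseteq C_{n}(W)$ whenever $2 \le m \le n$, which immediately gives a surjection $W/C_{n}(W) \twoheadrightarrow W/C_{m}(W)$ and hence the finiteness of $\dim W/C_{m}(W)$ from that of $\dim W/C_{n}(W)$. Since $C_{n}(W)$ is spanned by elements $u_{\alpha-n}w$ with $u \in V^{[\alpha]}_{+}$, $\alpha \in P_{V}^{g}$, $w \in W$, it suffices to express each such generator as an element of $C_{m}(W)$, and by induction on $n-m$ it is enough to treat the case $m = n-1$, i.e.\ to show $C_{n}(W) \subseteq C_{n-1}(W)$ for $n \ge 3$.

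First I would use the $L(-1)$-derivative property, which in the twisted setting still reads $\frac{d}{dx}Y_{W}(v,x) = Y_{W}(L_{V}(-1)v,x)$. Taking the appropriate residue, this yields the standard relation $(L_{V}(-1)u)_{m} = -m\, u_{m-1}$ on components; more precisely, for $u \in V^{[\alpha]}$ one has $(L_{V}(-1)u)_{\alpha - (n-1)} = -(\alpha-(n-1))\, u_{\alpha - n}$, where I must keep careful track of how the shift by $\alpha$ interacts with the grading convention used in Definition~3.1 (the $C_{n}$-subspace is defined via $u_{\alpha-n}w$, and $L_{V}(-1)$ preserves $V^{[\alpha]}$ since it commutes with $g$). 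Provided the scalar $\alpha - (n-1)$ is nonzero, we can then solve $u_{\alpha-n}w = -(\alpha-(n-1))^{-1}(L_{V}(-1)u)_{\alpha-(n-1)}w$, and since $L_{V}(-1)u \in V^{[\alpha]}_{+}$ as well (here we need $u \in V_{+}$ and the fact that $L_{V}(-1)$ raises weight by $1$, so it maps $V_{+}$ into $V_{+}$), the right-hand side lies in $C_{n-1}(W)$.

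The one case requiring care is when $\alpha - (n-1) = 0$, i.e.\ $\alpha = n-1 \in \Z_{+}$; but recall that $\alpha \in P_{V}^{g}$ means $\Re(\alpha) \in [0,1)$, so $\alpha$ cannot equal the integer $n-1 \ge 2$. Hence the scalar is always nonzero and the obstruction does not actually arise — this is the place where the constraint $\Re(\alpha) \in [0,1)$ on the twisted grading does the essential work, exactly as in the untwisted case the constraint $n \ge 2$ ensures $-(n-1) \ne 0$. I would also double-check that for $u \in V^{[\alpha]}_{+}$ with $\alpha \ne 0$ the element $L_{V}(-1)u$ is still in the $+$-part: since $V_{+} = \coprod_{k \in \Z_{+}} V_{(k)}$ and $L_{V}(-1)$ shifts weight up by one, this is clear, and the case $\alpha = 0$ (genuinely untwisted directions) is handled by the classical argument verbatim.

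The main obstacle, such as it is, is purely bookkeeping: getting the component-index conventions consistent with Definition~3.1 and with the normalization $Y_{W}(v,x) = Y_{W,0}(x^{-\mathcal{N}_{g}}v,x)$ from \eqref{HY-lem-2.3}, so that the residue extraction producing $(L_{V}(-1)u)_{\alpha-(n-1)} = -(\alpha-n+1)u_{\alpha-n}$ is literally correct with the $v_{n}$ versus $v_{n,0}$ notation in force. Once that is pinned down, the proof is a one-line consequence of the $L(-1)$-derivative property plus the observation $\alpha - (n-1) \ne 0$, followed by the induction on $n-m$ and the resulting surjection of quotients. I would present it in exactly that order: reduce to $m = n-1$, derive the component identity, invert the nonzero scalar, conclude $C_{n}(W) \subseteq C_{n-1}(W)$, then induct and pass to quotients.
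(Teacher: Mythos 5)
Your approach is exactly the paper's: the paper's entire proof is the single sentence ``As in the untwisted case, this result follows from the $L(-1)$-derivative property,'' and your reduction to $C_{n}(W)\subseteq C_{n-1}(W)$ via inverting the nonzero scalar $\alpha-(n-1)$ (nonzero because $\Re(\alpha)\in[0,1)$) is the intended argument. One correction to the bookkeeping you flagged: because $Y_{W}(u,x)$ carries $\log x$ through $x^{-\mathcal{N}_{g}}$, differentiating gives
$(L_{V}(-1)u)_{\alpha-n+1,0}=-(\alpha-n+1)\,u_{\alpha-n,0}+u_{\alpha-n,1}
=-(\alpha-n+1)\,u_{\alpha-n,0}-(\mathcal{N}_{g}u)_{\alpha-n,0}$,
so the identity you assert is \emph{not} literally $(L_{V}(-1)u)_{\alpha-(n-1)}=-(\alpha-n+1)u_{\alpha-n}$ when $g$ acts non-semisimply (the case this paper is specifically designed to cover). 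This is harmless: the extra term $(\mathcal{N}_{g}u)_{\alpha-n}w$ is again a $C_{n}$-generator with $u$ replaced by $\mathcal{N}_{g}u\in V^{[\alpha]}_{+}$, and since $\mathcal{N}_{g}$ is locally nilpotent the operator $(\alpha-n+1)+\mathcal{N}_{g}$ is invertible on $V^{[\alpha]}_{+}$, so $u_{\alpha-n}w=-\bigl(L_{V}(-1)((\alpha-n+1)+\mathcal{N}_{g})^{-1}u\bigr)_{\alpha-n+1}w\in C_{n-1}(W)$. With that one adjustment your proof is complete and agrees with the paper's.
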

\begin{proof}
As in the untwisted case, this result follows from the $L(-1)$-derivative property. 
\end{proof}

A $C_{n}$-cofinite lower-bounded
generalized $g$-twisted  $V$-module has the following properties:

\begin{prop}\label{C-2-gr-wk}
Let $W= \coprod_{m \in \C}W_{[n]}$
be a $C_{n}$-cofinite lower-bounded generalized
$g$-twisted $V$-module. Then $W$ has the following properties:
\begin{enumerate}
\item $W$ is quasi-finite-dimensional. 

\item There exists a finite-dimensional subspace $M$ of $W$
such that $W$ is spanned by elements of the form 
\begin{equation}\label{C-2-gr-wk-0.1}
v^{(1)}_{\alpha_{1}-n}
\cdots v^{(i)}_{\alpha_{i}-n}w
\end{equation}
for $i\in \N$,
$v^{(1)}\in V^{[\alpha_{1}]}_{+}, \dots, v^{(i)}\in V^{[\alpha_{i}]}_{+}$ and $w\in M$. 
In particular, $W$ is finitely generated. 

\item  If $W$ has an action of another automorphism $h$ of $V$ such that the $h$-action commutes
with $L_{W}(0)$, then $W$ can be 
decomposed as a direct sum of generalized eigenspaces of the action of $h$. 
\end{enumerate}
\end{prop}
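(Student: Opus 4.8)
The plan is to establish the three properties in order, since each builds on the previous one. The starting point is Proposition 3.3 applied with $m=2$: a $C_n$-cofinite lower-bounded generalized $g$-twisted $V$-module is also $C_2$-cofinite, so I may assume $\dim W/C_2(W)<\infty$ throughout. For part (1), I would first observe that $C_2(W)$ is spanned by elements $u_{\alpha-2}w$ with $u\in V^{[\alpha]}_+$, $\alpha\in P_V^g$, $w\in W$; by the $L_W(0)$-commutator formula and the grading on $V$, such an element $u_{\alpha-2}w$ with $u\in V_{(k)}^{[\alpha]}$ ($k\ge 1$) has weight strictly greater than the weight of $w$ — more precisely, if $w$ has weight $m$, then $u_{\alpha-2}w$ has weight $m+k-\alpha+2-1 = m + k - \alpha + 1$, and since $k\ge 1$ and $\Re(\alpha)\in[0,1)$, the real part of this weight exceeds $\Re(m)+1$. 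Picking coset representatives of a basis of $W/C_2(W)$ of minimal total weight, a standard induction on weight (exactly as in Gaberdiel--Neitzke / the untwisted $C_2$-cofiniteness argument) shows every homogeneous subspace $W_{[m]}$ is spanned by finitely many PBW-type monomials applied to these representatives, hence $\dim\left(\coprod_{\Re(n)\le N}W_{[n]}^{[\alpha]}\right)<\infty$ for each $N$. This is quasi-finite-dimensionality.

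For part (2), I would let $M$ be the (finite-dimensional) span of a chosen set of homogeneous coset representatives for $W/C_2(W)$. The same weight induction used in part (1) — peeling off one factor $v_{\alpha-2}$ at a time, each application strictly raising the real part of the weight — shows $W$ is spanned by elements of the form \eqref{C-2-gr-wk-0.1} with $n$ replaced by $2$, and then by Proposition 3.3 and the $L(-1)$-derivative property one can rewrite factors $v_{\alpha-2}$ in terms of $v_{\alpha-n}$-type operators, or alternatively run the induction directly with $C_n(W)$; in any case the conclusion is that $W$ is spanned by monomials $v^{(1)}_{\alpha_1-n}\cdots v^{(i)}_{\alpha_i-n}w$ with $w\in M$. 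Since $M$ is finite-dimensional, $W$ is finitely generated as a weak $g$-twisted $V$-module.

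For part (3), suppose $h$ is an automorphism of $V$ acting on $W$ commuting with $L_W(0)$. Since $W$ is quasi-finite-dimensional by part (1), each ``truncated'' subspace $W^{\le N} := \coprod_{\Re(n)\le N}\coprod_{\alpha} W_{[n]}^{[\alpha]}$ is finite-dimensional. Because $h$ commutes with $L_W(0)$, it preserves each generalized $L_W(0)$-eigenspace $W_{[n]}$, hence preserves each finite-dimensional $W^{\le N}$ as well as the finite-dimensional quotients $W^{\le N}/W^{\le N-1}$ (or better, works inside each $W_{[n]}$ directly, which is finite-dimensional by quasi-finiteness together with grading-restriction — here I should be careful: quasi-finite-dimensionality gives $\dim W_{[n]}<\infty$ after summing over the relevant $g$-weights, which suffices). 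On each finite-dimensional $h$-invariant subspace $W_{[n]}$, the operator $h$ decomposes it into a direct sum of generalized eigenspaces by elementary linear algebra (Jordan decomposition / primary decomposition). Taking the direct sum over all $n\in\C$ and collecting generalized eigenspaces with a common eigenvalue gives the desired decomposition $W=\coprod_\mu W^{(\mu)}$ where $W^{(\mu)}$ is the generalized $h$-eigenspace for eigenvalue $\mu$.

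The main obstacle is the weight-induction bookkeeping in parts (1) and (2): one must track carefully that each application of a mode $v_{\alpha-n}$ with $v\in V_+$ strictly increases the real part of the weight, so that the induction on $\Re(\text{weight})$ terminates and produces a \emph{finite} spanning set in each homogeneous piece. The subtlety specific to the twisted, possibly-non-semisimple setting is that weights are shifted by the fractional numbers $\alpha\in P_V^g$ and that $Y_W$ involves $\log x$ and non-integral powers via \eqref{HY-lem-2.3}; I would handle this by working with $Y_{W,0}$ and the nilpotent $\mathcal{N}_g$, noting that the $\log x$ terms and the $\mathcal{N}_g$-corrections do not affect the leading weight estimate (they only contribute lower-order or equal-weight nilpotent pieces), so the same induction goes through verbatim as in the untwisted case treated in \cite{H-C1-vtc} and \cite{T1}.
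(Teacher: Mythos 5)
Your proposal is correct and follows essentially the same route as the paper: an induction on the real part of the weight, driven by the observation that a mode $v_{\alpha-n}$ of a vector $v\in V_{+}$ strictly raises the real part of the weight, yields Properties 1 and 2, and Property 3 is elementary linear algebra on the finite-dimensional $h$-invariant spaces $W_{[m]}$. The paper packages Property 1 as a dimension count on weight slices (bounding $\dim\bigl(C_{n}(W)\cap W_{[m]}\bigr)$ by $\sum_{p}\dim V_{(p)}\cdot\dim(\text{lower slices})$) rather than as a spanning-set argument, and it runs the induction directly with $C_{n}(W)$ rather than first reducing to $C_{2}(W)$; these are presentational differences only.

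One sub-claim in your Part 2 is backwards. The $L(-1)$-derivative property expresses $v_{\alpha-n}w$ in terms of elements of the form $u_{\alpha-2}w$ with $u$ built from $L(-1)^{n-2}v$ (this is precisely how the preceding proposition obtains $C_{n}(W)\subseteq C_{2}(W)$ and hence that $C_{n}$-cofiniteness implies $C_{2}$-cofiniteness); it does \emph{not} allow you to rewrite a factor $v_{\alpha-2}$ in terms of $(\cdot)_{\beta-n}$-modes, so your first alternative for upgrading the $C_{2}$-spanning set to the required $C_{n}$-form fails. Your second alternative---running the weight induction directly with $C_{n}(W)$ and a finite-dimensional complement $M$ of $C_{n}(W)$, which exists by the $C_{n}$-cofiniteness hypothesis---is exactly what the paper does and is the option you should commit to.
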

\begin{proof}
To prove Property 1, we need to prove that 
$$\dim \left(\coprod_{\Re(m)\le N} W_{[m]}\right)<\infty$$
for $N\in \Z$. 
Since $W$ is lower bounded, there exists $N_{0}\in \Z$ such that $W_{[m]}=0$ when $\Re(m)<N_{0}$. 
We use induction on $N-N_{0}$. In the case $N-N_{0}=0$, 
for $v\in V_{(l)}^{[\alpha]}$ and $w\in W_{[m]}$, where $l\in \Z_{+}$ and $m\in \C$ satisfying $\Re(m)\ge N_{0}$,
$v_{\alpha-n}w\in W_{[l-\alpha+n-1+m]}$. Since $l\in \Z_{+}$, $n\in 2+\N$, and $\Re(m)\ge N_{0}$, 
$\Re(l-\alpha+n-1+m)> N_{0}$. So nonzero elements of $W_{[l-\alpha+n-1+m]}$ such as 
$v_{\alpha-n}w$ cannot be in $\coprod_{\Re(m)=N_{0}}W_{[m]}$.
 Therefore $\dim \coprod_{\Re(m)=N_{0}}W_{[m]}
\le \dim W/C_{n}(W)<\infty$ since $W$ is $C_{n}$-cofinite. This proves Property 1 in the case $N-N_{0}=0$.

Assume that Property 1 is true for $0\le N-N_{0}\le k$. Now let $N=N_{0}+k+1$. 
By the induction assumption, we need only 
prove 
\begin{equation}\label{C-2-gr-wk-0.5}
\dim \left(\coprod_{k<\Re(m)-N_{0}\le k+1}W_{[m]}\right)
\end{equation}
For $m\in \C$ satisfying $k<\Re(m)-N_{0}\le k+1$, let 
$(C_{n}(W)\cap W_{[m]})_{\alpha, l}$ be the subspace 
of $C_{n}(W)\cap W_{[m]}$ spanned by elements of the form 
$v_{\alpha-n}w$ for $v\in V_{(m+\alpha-n+1-l)}^{[\alpha]}$ and $w\in W_{[l]}$
for $l\in \C$ satisfying $0\le \Re(l)-N_{0}\le k$ and $m+\alpha-n+1-l\in \Z_{+}$. 
For $m, l\in \C$, $\alpha\in P_{V}^{g}$ such that $m+\alpha-n+1-l\in \Z_{+}$, or equivalently,
$m+\alpha-n-l\in \N$, 
we have $\Im(l)=\Im(m)+\Im(\alpha)$ and $\Re(l)\in \Re(m)+\Re(\alpha)-n+1- \Z_{+}
=\Re(m)+\Re(\alpha)-n- \N$. Let $p=m+\alpha-n-l\in \N$. 
If $m$ and $l$ further satisfy $0\le \Re(l)-N_{0}\le k<\Re(m)-N_{0}\le k+1$, then 
we have 
$$p=m+\alpha-n-l=\Re(m)+\Re(\alpha)-n-\Re(l)\le N_{0}+k+1+\Re(\alpha)-n-N_{0}<k+1-n.$$
So we obtain $0\le p\le k-n$. In particular, in the case $k<n$, there does not exist 
such $p$, or equivalently, $(C_{n}(W)\cap W_{[m]})_{\alpha, l}=0$. In the case $k\ge n$, 
for $p\in \N$ satisfying $0\le p\le  k-n$ and $l=m+\alpha-n-p$, 
we have 
\begin{align*}
k\ge \Re(m)-N_{0}-1&=\Re(l)-\Re(\alpha)+n+p-N_{0}-1\nn
&> \Re(l)-N_{0}\nn
&=\Re(m)+\Re(\alpha)-n-p-N_{0}\nn
&\ge \Re(m)-N_{0}+\Re(\alpha)-k\nn
&>k.
\end{align*}
So summing over $l\in m+\alpha-n+1-\Z_{+}$ satisfying $0\le \Re(l)-N_{0}\le k$ 
is the same as summing over $p=0, \dots, k-n$. 
Thus in the case $k\ge n$, , we have 
\begin{align}\label{C-2-gr-wk-1}
&\dim \left(C_{n}(W)\cap  \left(\coprod_{k<\Re(m)-N_{0}\le k+1} W_{[m]}\right)\right)\nn
&=\dim \left(\coprod_{k<\Re(m)-N_{0}\le k+1} C_{n}(W)\cap W_{[m]}\right)\nn
&=\dim \left(\coprod_{k<\Re(m)-N_{0}\le k+1}\coprod_{\alpha\in P_{V}^{g}}
\coprod_{\substack{ l\in m+\alpha-n+1-\Z_{+}\\0\le \Re(l)-N_{0}\le k}}
(C_{n}(W)\cap W_{[l]})_{\alpha, l}\right)\nn
&=\sum_{k<\Re(m)-N_{0}\le k+1}\sum_{\alpha\in P_{V}^{g}}
\sum_{\substack{ l\in m+\alpha-n+1-\Z_{+}\\0\le \Re(l)-N_{0}\le k}}\dim (C_{n}(W)\cap W_{[m]})_{\alpha, l}\nn
&\quad \le \sum_{k<\Re(m)-N_{0} \le k+1}\sum_{\alpha\in P_{V}^{g}}
\sum_{\substack{ l\in m+\alpha-n+1-\Z_{+}\\0\le \Re(l)-N_{0}\le k}}
\dim V_{(m+\alpha-n+1-l)}^{[\alpha]}\dim W_{[l]}\nn
&\quad \le \sum_{k<\Re(m)-N_{0} \le k+1}\sum_{\alpha\in P_{V}^{g}}\sum_{p=0}^{k-n}
\dim V_{(p)}^{[\alpha]}\dim W_{[m+\alpha-n-p]}\nn
&\quad \le \sum_{\alpha\in P_{V}^{g}}\sum_{p=0}^{k-n}
\dim V_{(p)}^{[\alpha]}\left(\sum_{k<\Re(m)-N_{0} \le k+1} \dim W_{[m+\alpha-n-p]}\right)\nn
&\quad = \sum_{p=0}^{k-n}\left(\sum_{\alpha\in P_{V}^{g}}
\dim V_{(p)}^{[\alpha]}\right)\dim \left(\coprod_{k<\Re(m)-N_{0} \le k+1} W_{[m+\alpha-n-p]}\right)\nn
&\quad \le \sum_{p=0}^{k-n}\left(\sum_{\alpha\in P_{V}^{g}}
\dim V_{(p)}^{[\alpha]}\right)\dim \left(\coprod_{0\le \Re(s)-N_{0}\le k}W_{[s]}\right)\nn
&\quad = \sum_{p=0}^{k-n}
\dim V_{(p)}\dim \left(\coprod_{0\le \Re(s)-N_{0}\le k}W_{[s]}\right)\nn
&\quad<\infty,
\end{align}
where the last step follows from the grading-restriction property of $V$
and  the induction assumption. 

On the other hand,   the quotient 
$$\left(\coprod_{k<\Re(m)-N_{0}\le k+1} W_{[m]}\right)
\Bigg/\left(C_{n}(W)\cap  \left(\coprod_{k<\Re(m)-N_{0}\le k+1} W_{[m]}\right)\right)$$
as a subspace of the finite-dimensional space $W/C_{n}(W)$ is also finite-dimensional. 
Thus by \eqref{C-2-gr-wk-1} and  the finite-dimensionality of this quotient, we obtain 
\begin{align*}
&\dim \left(\coprod_{k<\Re(m)-N_{0}\le k+1} W_{[m]}\right)\nn
&\quad \le \dim \left(C_{n}(W)\cap  \left(\coprod_{k<\Re(m)-N_{0}\le k+1} W_{[m]}\right)\right)\nn
&\quad\quad 
+ \dim \left(\coprod_{k<\Re(m)-N_{0}\le k+1} W_{[m]}\right)
\Bigg/\left(C_{n}(W)\cap  \left(\coprod_{k<\Re(m)-N_{0}\le k+1} W_{[m]}\right)\right)\nn
&\quad<\infty,
\end{align*}
proving \eqref{C-2-gr-wk-0.5}. 

We prove Property 2 now.   Take a finite-dimensional 
subspace  $M$ of $W$ such that $W=C_{n}(W)+ M$. 
But every finite-dimensional 
subspace of $W$ must be in $\coprod_{\Re(m)\le N}W_{[m]}$
for some $N\in \N$. In particular,  there exists $N_{M}\in \N$ such that $M\subset 
\coprod_{\Re(m)\le N_{M}}W_{[m]}$ and thus 
$\coprod_{\Re(m)> N_{M}}W_{[m]}
\subset C_{n}(W)$. Since 
$\coprod_{\Re(m)\le N_{M}}W_{[m]}$ is also 
finite-dimensional, we take 
$M=\coprod_{\Re(m)\le N_{M}}W_{[m]}$ from now on. 

Denote the space spanned by
elements of the form \eqref{C-2-gr-wk-0.1} by $\widetilde{W}$. What we want to prove is 
$W=\widetilde{W}$. 
We need only prove that $W_{[m]}\subset \widetilde{W}$ 
for every $m\in\C$. For each $m\in \C$ such that $W_{[m]}\ne 0$, there exists a unique $N_{m}\in \Z$ such that 
$N_{m}-1<\Re(m)\le N_{m}$. We use induction on $N_{m}$.  
When $N_{m}\le N_{M}$,  $W_{[m]}\subset 
M\subset \widetilde{W}$.
Assume that 
for $N_{m}\le p\in N_{M}+\N$, $W_{[m]}\subset \widetilde{W}$.
Then in the case $N_{m}=p+1$, since $\Re(m)>N_{m}-1=p\ge N_{M}$ and 
$\coprod_{\Re(m)> N_{M}}W_{[m]}
\subset C_{n}(W)$, we obtain
$W_{[m]}
\subset C_{n}(W)$. Then $W_{[m]}$  is spanned by elements of the form $v_{\alpha-n}w$ for
$v\in V_{(m+\alpha-n+1-l)}^{[\alpha]}$ and $w\in W_{[l]}$
for $l\in \C$ satisfying $\Re(l)\le p$ and $m+\alpha-n+1-l\in \Z_{+}$.  Since $\Re(l)\le p$, we have 
$N_{p}\le p$. By induction assumption, $w\in \widetilde{W}$. So $w$ is a linear combination of elements 
of the form $v^{(1)}_{\alpha_{1}-n}
\cdots v^{(i)}_{\alpha_{i}-n}\tilde{w}$
for homogeneous $v^{(1)}\in V^{[\alpha_{1}]}_{+}, \dots, v^{(i)}\in V^{[\alpha_{i}]}_{+}$ 
and $\tilde{w}\in M$.
Then $W_{\l m\r}$ is spanned by elements of the form $v_{\beta-n}v^{(1)}_{\alpha_{1}-n}
\cdots v^{(i)}_{\alpha_{i}-n}\tilde{w}$ for $v\in V_{(l)}^{[\beta]}$, 
$v^{(1)}\in V^{[\alpha_{1}]}_{+}, \dots, v^{(i)}\in V^{[\alpha_{i}]}_{+}$, and  $\tilde{w}\in M$.
So $W_{[m]}\subset \widetilde{W}$, proving Property 2. 

Finally, we prove Property 3. 
Since $W$ is quasi-finite-dimensional, $W_{[m]}$ for $m\in \C$ is Since $W$ is in particular
finite-dimensional. If
$W$ has an action of another automorphism $h$ of $V$ such that the $h$-action commutes
with $L_{W}(0)$, $W_{[m]}$ is invariant under the action of $h$. So $W_{[m]}$ can be decomposed as 
a direct sum of generalized eigenspaces of the action of $h$, and thus $W$ can also be decomposed as a direct sum of 
generalized eigenspaces of the action of $h$. 
\end{proof}

\begin{rema}
{\rm From Property 1 in Proposition \ref{C-2-gr-wk}, we see that
$C_{n}$-cofinite lower-bounded generalized $g$-twisted $V$-modules, 
$C_{n}$-cofinite grading-restricted generalized $g$-twisted $V$-modules, and 
$C_{n}$-cofinite quasi-finite-dimensional generalized $g$-twisted $V$-modules are
the same. In this paper, we will call these $g$-twisted modules 
$C_{n}$-cofinite grading-restricted generalized $g$-twisted $V$-modules, but note that 
these $g$-twisted modules  are  quasi-finite-dimensional.}
\end{rema}

For the next result and the twisted Nahm inequality to be proved in the next section, 
we  need the notion of surjectivity of a twisted intertwining operator. 
Let $W_{1}$, $W_{2}$,  $W_{3}$ be weak $g_{1}$-, $g_{2}$-, $g_{3}$-twisted $V$-modules, respectively. 
A twisted intertwining operator $\Y$ of type $\binom{W_{3}}{W_{1}W_{2}}$
is said to be surjective if $W_{3}$ is spanned by 
the coefficients of $\Y(w_{1}, x)w_{2}$ for $w_{1}\in W_{1}$ and $w_{2}\in W_{2}$.
If for a weak $V$-module $W_{3}$, there is an surjective twisted intertwining 
operator $\Y$ of type $\binom{W_{3}}{W_{1}W_{2}}$, we say that 
$(W_{3}, \Y)$ is a {\it weak surjective product of $W_{1}$ and $W_{2}$}. For simplicity, we shall 
often call $W_{3}$ a weak surjective product of $W_{1}$ and $W_{2}$. 
If $W_{3}$ is generalized $g_{3}$-twisted $V$-module or grading-restricted 
generalized $g_{3}$-twisted $V$-module or other classes of $g_{3}$-twisted $V$-modules, we also use the terms
{\it generalized surjective product of  $W_{1}$ and $W_{2}$}, 
{\it grading-restricted generalized surjective product  
 of $W_{1}$ and $W_{2}$} and so on.

\begin{prop}\label{int-weak-generalized}
Let $W_{1}$ and $W_{2}$ be generalized $g_{1}$ and $g_{2}$-twisted $V$-modules, respectively,
and $W_{3}$ a weak $g_{3}$-twisted $V$-module. If $W_{3}$ is a weak surjective product of 
$W_{1}$ and $W_{2}$, then $W_{3}$ is also a generalized $g_{3}$-twisted $V$-module. 
\end{prop}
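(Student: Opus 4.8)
The plan is to obtain the required $\C$-grading on $W_{3}$ as its decomposition into generalized eigenspaces for $L_{W_{3}}(0)$, and to produce this decomposition from the surjectivity of $\Y$ together with the $L(0)$-commutator formula for twisted intertwining operators. First I would set, for $n\in\C$,
$$(W_{3})_{[n]}=\{w\in W_{3}\mid (L_{W_{3}}(0)-n)^{K}w=0\ \text{for some}\ K\in\Z_{+}\}.$$
Distinct generalized eigenspaces of a single linear operator are automatically in direct sum, and the $L(0)$-grading condition holds by the very definition of $(W_{3})_{[n]}$; moreover the remaining ingredients in the definition of a generalized $g_{3}$-twisted $V$-module are immediate from the weak module structure already carried by $W_{3}$. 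Hence everything reduces to proving $W_{3}=\coprod_{n\in\C}(W_{3})_{[n]}$, i.e.\ that every element of $W_{3}$ lies in a finite-dimensional $L_{W_{3}}(0)$-invariant subspace.

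Since $\Y$ is bilinear and $W_{1}$, $W_{2}$ are generalized twisted modules, surjectivity of $\Y$ shows that $W_{3}$ is spanned by the coefficients of $\Y(w_{1},x)w_{2}$ with $w_{1}\in(W_{1})_{[n_{1}]}$ and $w_{2}\in(W_{2})_{[n_{2}]}$ homogeneous. Fixing such $w_{1},w_{2}$, the $L(0)$-grading condition on $W_{1}$ and $W_{2}$ guarantees that
$$U_{1}=\operatorname{span}\{(L_{W_{1}}(0)-n_{1})^{j}w_{1}\mid j\ge 0\},\qquad U_{2}=\operatorname{span}\{(L_{W_{2}}(0)-n_{2})^{j}w_{2}\mid j\ge 0\}$$
are finite-dimensional and invariant under $L_{W_{1}}(0)$ and $L_{W_{2}}(0)$ respectively. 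Writing $\Y(u_{1},x)u_{2}=\sum_{k=0}^{K}\sum_{n\in\C}\Y_{n,k}(u_{1})u_{2}\,x^{-n-1}(\log x)^{k}$, I would then fix $n\in\C$ and consider
$$S_{n}=\operatorname{span}\{\Y_{n,k}(u_{1})u_{2}\mid u_{1}\in U_{1},\ u_{2}\in U_{2},\ 0\le k\le K\}\subseteq W_{3},$$
which is finite-dimensional. Extracting the coefficient of $x^{-n-1}(\log x)^{k}$ from the $L(0)$-commutator formula and using $x\frac{d}{dx}\bigl(x^{-n-1}(\log x)^{k}\bigr)=(-n-1)x^{-n-1}(\log x)^{k}+k\,x^{-n-1}(\log x)^{k-1}$ gives
\begin{align*}
L_{W_{3}}(0)\Y_{n,k}(u_{1})u_{2}&=\Y_{n,k}(u_{1})L_{W_{2}}(0)u_{2}+(-n-1)\Y_{n,k}(u_{1})u_{2}\\
&\quad+(k+1)\Y_{n,k+1}(u_{1})u_{2}+\Y_{n,k}(L_{W_{1}}(0)u_{1})u_{2}.
\end{align*}
Since $L_{W_{1}}(0)u_{1}\in U_{1}$, $L_{W_{2}}(0)u_{2}\in U_{2}$, and $\Y_{n,K+1}=0$, the right-hand side lies in $S_{n}$, so $S_{n}$ is $L_{W_{3}}(0)$-invariant.

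Consequently each coefficient $\Y_{n,k}(w_{1})w_{2}$ lies in the finite-dimensional $L_{W_{3}}(0)$-stable subspace $S_{n}$, hence is a finite sum of generalized $L_{W_{3}}(0)$-eigenvectors, i.e.\ an element of $\coprod_{m\in\C}(W_{3})_{[m]}$. As such coefficients span $W_{3}$, we obtain $W_{3}=\coprod_{n\in\C}(W_{3})_{[n]}$, as desired. The point that requires care is the logarithmic bookkeeping in the $L(0)$-commutator formula: one must observe that $x\frac{d}{dx}$ preserves the power of $x$ and only lowers the degree in $\log x$, so that each ``$x^{-n-1}$-block'' $S_{n}$ is finite-dimensional and $L_{W_{3}}(0)$-stable. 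A secondary subtlety is that $W_{1}$ and $W_{2}$ need not be grading-restricted, so their weight spaces may be infinite-dimensional; what rescues the argument is that each individual vector still lies in a finite-dimensional $L(0)$-invariant subspace by the $L(0)$-grading condition, which is exactly what is exploited above.
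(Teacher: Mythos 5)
Your proof is correct, but it takes a genuinely different route from the paper. The paper's proof is a one-paragraph reduction: it restricts everything to the fixed-point subalgebra $V^{g_{1},g_{2},g_{3}}$ of $V$ under the group generated by $g_{1}$, $g_{2}$, $g_{3}$, observes that $W_{1}$, $W_{2}$ become (untwisted) generalized modules, $W_{3}$ a weak module, and $\Y$ a surjective logarithmic intertwining operator for that subalgebra, and then quotes Proposition 3.4 of \cite{H-C1-vtc} to conclude that $W_{3}$ is a generalized module, hence a generalized $g_{3}$-twisted $V$-module. You instead prove the statement from scratch: you define $(W_{3})_{[n]}$ as the generalized $L_{W_{3}}(0)$-eigenspaces and use surjectivity plus the $L(0)$-commutator formula to show that every coefficient $\Y_{n,k}(w_{1})w_{2}$ (for homogeneous $w_{1}$, $w_{2}$) lies in the finite-dimensional $L_{W_{3}}(0)$-stable space $S_{n}$, so that $W_{3}$ decomposes into generalized eigenspaces. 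Your coefficient extraction from the commutator formula is correct, and the observation that $x\frac{d}{dx}$ preserves the power of $x$ while lowering the $\log x$ degree is exactly the point that makes $S_{n}$ finite-dimensional and invariant. In effect you have transplanted the proof of the cited untwisted proposition into the twisted setting, which works precisely because the $L(0)$-commutator formula for twisted intertwining operators has the same shape as in the untwisted case. What the paper's route buys is brevity and reuse of an established result; what yours buys is self-containedness and independence from the fixed-point-subalgebra device (in particular, no need to worry about what structure $V^{g_{1},g_{2},g_{3}}$ retains). One small point of hygiene: the uniform bound $K$ on the $\log x$-degree should be taken as the maximum over a basis of $U_{1}\otimes U_{2}$ if one does not assume a single $K$ works for all pairs, but this is harmless since $U_{1}$ and $U_{2}$ are finite-dimensional.
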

\begin{proof}
Let $V^{g_{1}, g_{2}, g_{3}}$ be the fixed point subalgebra of $V$ under the group 
generated by $g_{1}$, $g_{2}$ and $g_{3}$. Then 
$W_{1}$ and $W_{2}$ are also generalized $V^{g_{1}, g_{2}, g_{3}}$-modules, respectively,
and $W_{3}$  also a weak $V^{g_{1}, g_{2}, g_{3}}$-module. Let $\Y$ be the  
 surjective twisted intertwining operator $\Y$ of 
type $\binom{W_{3}}{W_{1}W_{2}}$ for the weak surjective product  $W_{3}$ of 
$W_{1}$ and $W_{2}$. Then $\Y$ is  an intertwining operator 
of the same type when $W_{1}$, $W_{2}$, 
and $W_{3}$ are viewed as generalized or weak $V^{g_{1}, g_{2}, g_{3}}$-modules. 
Moreover, since $\Y$ as a twisted intertwining operator among the generalized or weak 
twisted $V$-modules is surjective, it is also surjective as an intertwining operator among 
the generalized or weak $V^{g_{1}, g_{2}, g_{3}}$-modules. 
Then by Proposition 3.4 in \cite{H-C1-vtc}, $W_{3}$ is a generalized $V^{g_{1}, g_{2}, g_{3}}$-module. 
Thus $W_{3}$ is a generalized $g_{3}$-twisted $V$-module.
\end{proof}

\setcounter{equation}{0}

\section{A twisted Nahm inequality}

In this section, by using the same method as in \cite{H-C1-vtc}, 
we prove the following twisted Nahm inequality in the case that the automorphisms of $V$ involved 
commute, but in general can be of infinite orders and do not have to act semisimply on $V$:

\begin{thm}\label{n-W-1-W-2}
Let $g_{1}$ and $g_{2}$ be commuting automorphisms 
of $V$ and $W_{1}$, $W_{2}$ lower-bounded generalized 
$g_{1}$, $g_{2}$-twisted $V$-modules, respectively.  Then 
 for a generalized surjective product 
$W_{3}$ of $W_{1}$ and $W_{2}$ and $p, q\in 2+\N$, 
\begin{equation}\label{nahm-ineq}
\dim (W_{3}/
C_{\min(p, q)}(W_{3}))\le \dim (W_{1}/
C_{p}(W_{1}))\dim (W_{2}/
C_{q}(W_{2})).
\end{equation}
\end{thm}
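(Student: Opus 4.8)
The plan is to adapt the proof of the corresponding untwisted Nahm-type inequality in \cite{H-C1-vtc}, using the component form of the twisted Jacobi identity \eqref{jacobi-twisted-int-2} in place of the untwisted one. First I would fix a surjective twisted intertwining operator $\Y$ of type $\binom{W_{3}}{W_{1}W_{2}}$, so that $W_{3}$ is spanned by the coefficients of $\Y(w_{1}, x)w_{2}$ for $w_{1}\in W_{1}$, $w_{2}\in W_{2}$. I would pick finite-dimensional graded subspaces $M_{1}\subset W_{1}$ and $M_{2}\subset W_{2}$ whose images span $W_{1}/C_{p}(W_{1})$ and $W_{2}/C_{q}(W_{2})$ respectively; the goal is to show that the image of the (finite-dimensional) space spanned by the coefficients of $\Y(m_{1}, x)m_{2}$ for $m_{1}\in M_{1}$, $m_{2}\in M_{2}$ already spans $W_{3}/C_{\min(p,q)}(W_{3})$. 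Writing $r=\min(p,q)$, this would give $\dim(W_{3}/C_{r}(W_{3}))\le \dim M_{1}\cdot \dim M_{2} = \dim(W_{1}/C_{p}(W_{1}))\dim(W_{2}/C_{q}(W_{2}))$.

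The heart of the argument is a reduction: given a coefficient of $\Y(w_{1}, x)w_{2}$ with $w_{1}\in W_{1}$, $w_{2}\in W_{2}$ arbitrary, I would show modulo $C_{r}(W_{3})$ it can be rewritten in terms of coefficients of $\Y(w_{1}', x)w_{2}')$ with $w_{1}', w_{2}'$ of strictly lower weight, together with the action of modes $u_{\beta - r}$ for $u\in V_{+}^{[\beta]}$ (which land in $C_{r}(W_{3})$). Concretely, if $w_{1}\notin M_{1} + C_{p}(W_{1})$, then $w_{1}$ is a sum of terms $v_{\alpha_{1}-p}w_{1}''$ with $v\in V_{+}^{[\alpha_{1}]}$ and $w_{1}''$ of lower weight; I substitute this into $\Y$ and apply the component Jacobi identity \eqref{jacobi-twisted-int-2} with $n_{1}$ chosen so that $\alpha_{1} - n_{1} = \alpha_{1} - p$, i.e. $n_{1}=p$. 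The right-hand side of \eqref{jacobi-twisted-int-2} then produces terms $\Y((\cdots)_{\alpha_{1}-p+k+l}w_{1}'', x)$; the first term on the left involves modes $(\cdots)_{\alpha_{1}+\alpha_{2}-n_{1}-n_{2}-k-l}$ of $v$ acting on $W_{3}$, which for suitable choice of $n_{2}$ become modes of the form $u_{\gamma - p}$ (hence in $C_{p}(W_{3})\subset C_{r}(W_{3})$ since $p\ge r$, using Proposition 4.2... the $C_n$-containment proposition); the middle term involves modes $(\cdots)_{\alpha_{2}-n_{2}+k+l}$ acting on $W_{2}$ \emph{after} $\Y$, which I arrange (again by choosing $n_{2}$) to be modes $v'_{\alpha_{2}-q}$ so that they modify $w_{2}$ into an element of $C_{q}(W_{2})$, i.e. killed modulo $M_{2}+C_{q}(W_{2})$. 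A symmetric argument handles the case $w_{2}\notin M_{2}+C_{q}(W_{2})$. Iterating this reduction and using that $W_{1}$ and $W_{2}$ are lower-bounded (so weights cannot decrease indefinitely) terminates the process, leaving only coefficients of $\Y(m_{1}, x)m_{2}$ with $m_{i}\in M_{i}$, modulo $C_{r}(W_{3})$.

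There are some bookkeeping points I would handle carefully: the sums over $k, l\in\N$ in \eqref{jacobi-twisted-int-2} are finite when applied to any fixed vector because of local nilpotence of $\mathcal{N}_{g_1}, \mathcal{N}_{g_2}$ and the lower-truncation properties, so all manipulations are legitimate; the fractional powers $x^{\alpha_i - n_i - k - l}$ and the $\log x$ terms carried by $\Y$ do not obstruct the weight-counting since passing between $\Y(\cdot, x)$-coefficients only shifts weights by the relevant $\alpha$'s and integers, and the induction is on the real part of the weight of $w_1$ (or $w_2$), which is bounded below. One also has to check that taking coefficients (i.e. residues in $x_0, x_1$) interacts correctly with the grading so that the ``lower weight'' claim is genuine — here the fact that $V_{+} = \coprod_{n\in\Z_{+}}V_{(n)}$ has strictly positive weights is what forces $w_1''$ in $v_{\alpha_1 - p}w_1''$ to have smaller weight than $w_1$ (the mode $v_{\alpha_1 - p}$ raises weight by $\wt v - \alpha_1 + p - 1 \ge 1 - \alpha_1 + p - 1 > 0$ when $p\ge 2$ and $\Re\alpha_1 < 1$). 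The main obstacle, I expect, is precisely the simultaneous choice of the free indices $n_1, n_2$ in the Jacobi identity so that \emph{all three} groups of terms land where we want them — the $v$-modes on $W_3$ into $C_p(W_3)\subseteq C_r(W_3)$, the $v$-modes on $W_2$ into $C_q(W_2)$, and the surviving $\Y$-terms at strictly lower weight — while keeping track of the fact that $p$ and $q$ may differ and it is only $\min(p,q)$ that appears on the left; one must exploit Proposition 4.2 (that $C_n$-cofiniteness for larger $n$ implies it for smaller, and correspondingly that $C_p(W_3)\subseteq C_r(W_3)$-type containments via the $L(-1)$-derivative property) to absorb the excess. Modulo this, the argument is a faithful transcription of the method of \cite{H-C1-vtc}.
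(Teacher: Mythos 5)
Your reduction step (rewriting $\Y(v_{\alpha_1-p}w_1'',x)w_2$ via the component Jacobi identity and absorbing terms into $C_{\min(p,q)}(W_3)$) is in the right spirit, but the counting that is supposed to yield \eqref{nahm-ineq} does not work. The space spanned by the coefficients of $\Y(m_1,x)m_2$ for $m_1\in M_1$, $m_2\in M_2$ is \emph{not} finite-dimensional: for homogeneous $m_1,m_2$ the coefficients $\Y_{n,k}(m_1)m_2$ have weights $\wt m_1+\wt m_2-n-1$ ranging over infinitely many values of $n$, and their images in $W_3/C_{\min(p,q)}(W_3)$ lie in distinct graded pieces, so even a proof that they span $W_3/C_{\min(p,q)}(W_3)$ gives no bound of the form $\dim M_1\cdot\dim M_2$. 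The paper's proof (following \cite{H-C1-vtc}) gets the bound by dualizing and evaluating at the fixed point $z$: it identifies $W_3/C_{\min(p,q)}(W_3)$ with (the dual of) $C_{\min(p,q)}(W_3)^{\bot}\subset W_3'$ and shows that the single linear map $w_3'\mapsto \langle w_3',\Y(\cdot,z)\cdot\rangle|_{M_1\otimes M_2}$ into $(M_1\otimes M_2)^{*}$ is injective. The induction you describe appears there in dual form (proving $\langle w_3',\Y(w_1,z)w_2\rangle=0$ for all $w_1,w_2$ once it holds on $M_1\otimes M_2$), and the conclusion $w_3'=0$ then needs two further ingredients absent from your proposal: analyticity in $z$ via the $L(-1)$-derivative property (to pass from vanishing at one $z$ to vanishing of all coefficients $\Y_{n,k}$) and the surjectivity of $\Y$. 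The evaluation at a single $z$ is precisely what collapses the infinitely many coefficients into one functional on $M_1\otimes M_2$; without it, or some substitute, your argument cannot produce the right-hand side of \eqref{nahm-ineq}.

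Two smaller points. First, you cannot choose $n_2$ so that the middle term of \eqref{jacobi-twisted-int-2} only produces modes $v'_{\alpha_2-q}$ sending $w_2$ into $C_q(W_2)$: the indices there are $\alpha_2-n_2+k+l$ with $k,l$ ranging over all of $\N$, so at best the $k=l=0$ term lands in $C_q(W_2)$, and the remaining terms are genuinely annihilation-type modes applied to $w_2$. The paper disposes of these by the induction hypothesis, not by a $C_q$-containment. Second, and relatedly, the induction must be run on the \emph{sum} of the (real parts of the) weights of the two arguments, not on one argument at a time: reducing $w_1$ produces terms in which the $w_2$-slot acquires higher weight (and vice versa), so the two reductions interfere; only the total weight is guaranteed to drop, and it is bounded below because $W_1$ and $W_2$ are lower bounded. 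Your parenthetical ``induction on the weight of $w_1$ (or $w_2$)'' glosses over exactly this interaction.
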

\begin{proof}
In the case that at least one of $\dim (W_{1}/
C_{p}(W_{1}))$ and $\dim (W_{2}/
C_{q}(W_{2}))$ is $\infty$, \eqref{nahm-ineq} holds. 
So we need only prove \eqref{nahm-ineq} in the case that 
$\dim W_{1}/
C_{p}(W_{1}), \dim W_{2}/
C_{q}(W_{2})<\infty$, that is, in the case that 
$W_{1}$ and $W_{2}$ are $C_{p}$-cofinite and $C_{q}$-cofinite
lower-bounded generalized twisted $V$-modules, respectively. 
In this case,  the properties in Proposition \ref{C-2-gr-wk} hold for $W_{1}$ and $W_{2}$. 
In particular, $W_{1}$ and $W_{2}$  are quasi-finite-dimensional generalized twisted $V$-modules. 

Let $M_{1}$ and $M_{2}$ be finite-dimensional graded subspaces of $W_{1}$ and $W_{2}$,
respectively,  such that 
$W_{1}=C_{p}(W_{1})\oplus M_{1}$ and $W_{2}=C_{q}(W_{2})\oplus M_{2}$.
Then $\dim (W_{1}/C_{p}(W_{1}))=\dim M_{1}$ and $\dim (W_{2}/C_{q}(W_{2}))=\dim M_{2}$.

Since $W_{3}$
is a generalized $g_{1}g_{2}$-twisted $V$-module, $W_{3}$ is $\C$-graded and 
we have the contragredient $W_{3}'$ of 
$W_{3}$. 
Let $M_{3}$ be a graded subspace of $W_{3}$ 
such that $W_{3}=C_{\min(p, q)}(W_{3})\oplus M_{3}$. Then 
$W_{3}/C_{\min(p, q)}(W_{3})$ 
is isomorphic to $M_{3}$. 
We need only prove 
$\dim M_{3}\le \dim M_{1} \dim M_{2}$. 

Let 
$$C_{\min(p, q)}(W_{3})^{\bot}=\{w_{3}'\in W_{3}'\mid \langle w_{3}', 
C_{\min(p, q)}(W_{3})\rangle=0\}
\subset W_{3}'.$$
and let $M_{3}'$ be the graded dual of $M_{3}$ with respect to the 
$\C$-grading induced from the one on $W_{3}$. 
We define a linear map $r: C_{\min(p, q)}(W_{3})^{\bot}\to M_{3}'$ to 
be the restriction map sending elements of $C_{\min(p, q)}(W_{3})^{\bot}\subset W_{3}'$ 
to  their restrictions to $M_{3}$, that is, 
$(r(w_{3}'))(w_{3})=\langle w_{3}',  w_{3}\rangle$
for $w_{3}'\in C_{\min(p, q)}(W_{3})^{\bot}$ and $w_{3}\in M_{3}$. 
If $r(w_{3}')=0$ for $w_{3}'\in C_{\min(p, q)}(W_{3})^{\bot}$, 
then $\langle w_{3}', w_{3}\rangle=0$ for all $w_{3}\in M_{3}$. 
But by definition, $\langle w_{3}', C_{\min(p, q)}(W_{3})\rangle =0$. Since 
$W_{3}=C_{\min(p, q)}(W_{3})\oplus M_{3}$, we obtain
$\langle w_{3}', w_{3}\rangle=0$ for all $w_{3}\in W_{3}$. 
Thus $w_{3}'=0$, proving that $r$ is injective. 
Given $w_{3}'\in M_{3}'$, we extend it to an element
$\bar{w}_{3}'\in W_{3}'$ by 
$\langle \bar{w}_{3}', \tilde{w}_{3}+w_{3}\rangle=
\langle w_{3}', w_{3}\rangle$ for $\tilde{w}_{3}\in 
C_{\min(p, q)}(W_{3})$ and $w_{3}\in M_{3}$. By definition, 
$\bar{w}_{3}'\in C_{\min(p, q)}(W_{3})^{\bot}$ and $r(\bar{w}_{3}')=w_{3}'$, 
proving $r$ is surjective. 
We have proved that 
$r$ is a linear isomorphism. Therefore we need only prove that
$\dim C_{\min(p, q)}(W_{3})^{\bot}\le \dim M_{1} \dim M_{2}$.

Let $\Y$ be a surjective twisted intertwining operator of type $\binom{W_{3}}{W_{1}W_{2}}$.
Then we have
$$\Y(w_{1}, x)w_{2}=\sum_{k=0}^{K}\sum_{n\in \C} \Y_{n, k}
(w_{1})w_{2} x^{-n-1}(\log x)^{k}$$
for $w_{1}\in W_{1}$ and $w_{2}\in W_{2}$. 
For homogeneous $w_{1}\in W_{1}$ and $w_{2}\in W_{2}$, we have
$$\wt \Y_{n, k}(w_{1})w_{2}=\wt w_{1}-n-1+\wt w_{2}.$$ 
Hence for homogeneous $w_{1}\in W_{1}$, $w_{2}\in W_{2}$, and  $w_{3}'\in W_{3}'$,
$$\langle w_{3}',  \Y(w_{1}, x)w_{2}\rangle
=\sum_{k=0}^{K}\langle w_{3}',  \Y_{\swt w_{1}+\swt w_{2}-\swt w_{3}'-1, k}
(w_{1})w_{2}\rangle
x^{-\swt w_{1}-\swt w_{2}+\swt w_{3}'}(\log x)^{k}.$$

Fix $z\in \C^{\times}$. Recall from Section 2 that 
we use $\log z$  to denote 
$\log |z|+i\arg z$, where $0\le \arg z<2\pi$ and the $0$-th value $\Y(w_{1}, z)w_{2}= \Y^{0}(w_{1}, z)w_{2}$
of $ \Y(w_{1}, x)w_{2}$ at $x=z$. Then
$$\langle w_{3}' , \Y(w_{1}, z)w_{2}\rangle
=\sum_{k=0}^{K}\langle w_{3}',  \Y_{\swt w_{1}+\swt w_{2}-\swt w_{3}'-1, k}
(w_{1})w_{2}\rangle
e^{(-\swt w_{1}-\swt w_{2}+\swt w_{3}')\log z}(\log z)^{k}$$
is well defined. 
We define a linear map 
$f: C_{\min(p, q)}(W_{3})^{\bot}\to (M_{1}\otimes M_{2})^{*}$ by 
$$(f(w_{3}'))(w_{1}\otimes w_{2})
=\langle w_{3}', \Y(w_{1}, z)w_{2}\rangle$$
for $w_{1}\in M_{1}$, $w_{2}\in M_{2}$ and $w_{3}'\in C_{\min(p, q)}(W_{3})^{\bot}$. 
To prove 
$$\dim C_{\min(p, q)}(W_{3})^{\bot}\le \dim M_{1} \dim M_{2}=\dim (M_{1}\otimes M_{2})^{*},$$
we need only prove that $f$ is injective. 

Assume that $f(w_{3}')=0$ for an element $w_{3}'\in C_{\min(p, q)}(W_{3})^{\bot}$.
Then by the definition of $f$, for $w_{1}\in M_{1}$, $w_{2}\in M_{2}$,
\begin{equation}\label{int-w-1-w-2}
\langle w_{3}', \Y(w_{1}, z)w_{2}\rangle=0
\end{equation}
We now prove \eqref{int-w-1-w-2} for all $w_{1}\in W_{1}$, $w_{2}\in W_{2}$. 

Since $W_{1}$ and $W_{2}$ are 
quasi-finite-dimensional generalized $V$-modules, we have $W_{1}=\coprod_{m\in \C}(W_{1})_{[m]}$ and 
$W_{2}=\coprod_{n\in \C}(W_{2})_{[n]}$ such that for $N\in \N$, 
$$\dim \left(\coprod_{\Re(m)\le N}(W_{1})_{[m]}\right),
\dim \left(\coprod_{\Re(n)\le N}(W_{2})_{[n]}\right)<\infty.$$ 
In fact,  there exist $N_{1}^{0}\in \Z$ such that 
$(W_{1})_{[m]}=0$ for $\Re(m)<N_{1}^{0}$ and there exists $m\in \C$ satisfying $N_{1}^{0}\le \Re(m)<N_{1}^{0}+1$
such that $(W_{1})_{[m]}\ne 0$. Similarly we have such an $N_{2}^{0}$ for $W_{2}$. We want to prove  \eqref{int-w-1-w-2}
for all $w_{1}\in \coprod_{\Re(m)\le N_{1}^{0}+N_{1}}(W_{1})_{[m]}$ and $w_{2}\in 
\coprod_{\Re(n)\le N_{2}^{0}+N_{2}}(W_{1})_{[n]}$
for $N_{1}, N_{2}\in \N$. 
We use induction on $N_{1}+N_{2}$ to 
prove \eqref{int-w-1-w-2}.

When $N_{1}+N_{2}=0$, $N_{1}=N_{2}=0$. 
Since $W_{1}=C_{p}(W_{1})\oplus M_{1}$ and $W_{2}=C_{q}(W_{2})\oplus M_{2}$,
there exist homogeneous $u^{(k)}\in V_{g_{1}}^{[\alpha_{k}]}\cap V_{+}, v^{(l)}\in V_{g_{2}}^{[\beta_{l}]}\cap V_{+}$,
$\tilde{w}_{1}^{(k)} \in (W_{1})_{[p_{k}]}$, $\tilde{w}_{2}^{(l)} \in (W_{2})_{[q_{l}]}$, 
for $k=1, \dots, s$, $l=1, \dots, t$,
$\tilde{w}_{1}\in M_{1}\cap (W_{1})_{[ N_{1}^{0}]}$, and
$\tilde{w}_{2}\in M_{2}\cap (W_{2})_{[N_{2}^{0}]}$ such that 
\begin{align}
w_{1}&=\sum_{k=1}^{s}u^{(k)}_{\alpha_{k}-p}\tilde{w}^{(k)}_{1}
+\tilde{w}_{1},\label{nahm-ineq-1}\\
w_{2}&=\sum_{l=1}^{t}v^{(l)}_{\beta_{l}-q}\tilde{w}^{(l)}_{2}+\tilde{w}_{2}.
\label{nahm-ineq-2}
\end{align}
We must have $u^{(k)}_{\alpha_{k}-p}\tilde{w}^{(k)}_{1}\in (W_{1})_{[N_{1}^{0}]}$,
since $w_{1}\in (W_{1})_{[N_{1}^{0}]}$. 
On the other hand, we have 
$u^{(k)}_{\alpha_{k}-p}\tilde{w}^{(k)}_{1}\in (W_{1})_{[ \swt u^{(k)}-\alpha_{k}+
p-1+p_{k}]}$. So we obtain
$\wt u^{(k)}-\alpha_{k}+
p-1+p_{k}=N_{1}^{0}$, or equivalently, 
$p_{k}=N_{1}^{0}-\wt u^{(k)}+\alpha_{k}-p+1$. Since $\wt u^{(k)}\in \Z_{+}$ and 
$\Re(\alpha_{k}\in [0, 1)$, we have $\Re(p_{k})=N_{1}^{0}-\wt u^{(k)}+\Re(\alpha_{k})-p+1<N_{1}^{0}$.
Hence $(W_{1})_{[p_{k}]}=0$ and therefore
$u^{(k)}_{\alpha_{k}-p}\tilde{w}^{(k)}_{1}= 0$ for 
$k=1, \dots, s$. Similarly, we can prove $v^{(l)}_{\beta_{l}-q}\tilde{w}^{(l)}_{2}=0$.
Thus $w_{1}=\tilde{w}_{1}\in M_{1}$ and $w_{2}=\tilde{w}_{2}\in M_{2}$. In this case, \eqref{int-w-1-w-2} is true. 

Assume that when $N_{1}+N_{2}<m \in \N$, \eqref{int-w-1-w-2} is true. 
In the case $N_{1}+N_{2}=m$,  we consider $w_{1}\in (W_{1})_{[n_{1}]}$
and $w_{2}\in (W_{2})_{[n_{2}]}$ for $n_{1}, n_{2}\in \C$ such that $\Re(n_{1})\le N_{1}^{0}+N_{1}$, 
$\Re(n_{2})\le N_{2}^{0}+N_{2}$, and $N_{0}^{1}+N_{2}^{0}+m-1<\Re(n_{1}+n_{2})\le N_{0}^{1}+n_{2}^{0}+ N_{1}+N_{2}$. 
We still have \eqref{nahm-ineq-1} and \eqref{nahm-ineq-2} for
homogeneous $u^{(k)}\in V_{g_{1}}^{[\alpha_{k}]}\cap V_{+}, v^{(l)}\in V_{g_{2}}^{[\beta_{l}]}\cap V_{+}$,
$\tilde{w}_{1}^{(k)} \in (W_{1})_{[p_{k}]}$, $\tilde{w}_{2}^{(l)} \in (W_{2})_{[ q_{l}]}$, 
for $k=1, \dots, s$, $l=1, \dots, t$, $\tilde{w}_{1}\in M_{1}\cap (W_{1})_{[n_{1}]}$, and
$\tilde{w}_{2}\in M_{2}\cap (W_{2})_{[n_{2}]}$.
In fact, we can always find $u^{(k)}\in V_{g_{1}, g_{2}}^{[\alpha_{k}, \hat{\beta}_{k}]}\cap V_{+}, 
v^{(l)}\in V_{g_{1}, g_{2}}^{[\hat{\alpha}_{l}, \beta_{l}]}\cap V_{+}$ and the corresponding 
$\tilde{w}_{1}^{(k)}$, $\tilde{w}_{2}^{(l)}$, $\tilde{w}_{1}$, and $\tilde{w}_{2}$ such that 
 \eqref{nahm-ineq-1} and \eqref{nahm-ineq-2} hold. 
 In this case, similarly to the proof above in the case $N_{1}=N_{2}=0$, 
we have $\wt u^{(k)}-\alpha_{k}+p-1+p_{k}=n_{1}$, 
or equivalently, $\wt u^{(k)}+p_{k}=n_{1}+\alpha_{k}-p+1$.
Similarly, we also have $\wt v^{(l)}-\beta_{l}+q-1+q_{l}=n_{2}$
or equivalently, $\wt v^{(l)}+q_{l}=n_{2}+\beta_{l}-q+1$. 

We need to use \eqref{jacobi-twisted-int-2}. We first rewrite \eqref{jacobi-twisted-int-2} as
\begin{align}\label{jacobi-twisted-int-4}
&\sum_{k\in \N}\sum_{l\in \N}\binom{\alpha_{1}-n_{1}}{k}(-x)^{k+l}
\left(\binom{\mathcal{N}_{g_{1}}}{l}x^{-\mathcal{N}_{g_{2}}}v\right)
_{\alpha_{1}+\alpha_{2}-n_{1}-n_{2}-k-l}\Y(w_{1}, x)\nn
&\quad \quad-\sum_{k\in \N}\sum_{l\in \N}\binom{\alpha_{1}-n_{1}}{k}
e^{\pi \i (\alpha_{1}-n_{1}-k-l)}x^{\alpha_{1}-n_{1}-k-l}\cdot\nn
&\quad\quad\quad\quad\quad\quad\quad\cdot \Y(w_{1}, x)\left(e^{\pi \i\mathcal{N}_{g_{1}}}\binom{\mathcal{N}_{g_{1}}}{l}
x^{\mathcal{N}_{g_{1}}-\mathcal{N}_{g_{2}}}v\right)_{\alpha_{2}-n_{2}+k+l}\nn
&\quad=\sum_{k\in \N}\sum_{l\in \N}\binom{\alpha_{2}-n_{2}}{k}
x^{\alpha_{2}-n_{2}-k-l}
\Y\left(\left(\binom{\mathcal{N}_{g_{2}}}{l}v\right)_{\alpha_{1}-n_{1}+k+l}w_{1}, x\right)
\end{align}
by replacing $v$ by $x^{-\mathcal{N}_{g_{2}}}v$ in \eqref{jacobi-twisted-int-2}.
Then using \eqref{jacobi-twisted-int-4} with $v$, $\alpha_{1}$, $\alpha_{2}$, $n_{1}$, $n_{2}$, $w_{1}$, and $w_{2}$ 
in \eqref{jacobi-twisted-int-4} taken to be 
$u^{(k)}$, $\alpha_{k}$, $\hat{\beta}_{k}$, $p$, $1$,
$\tilde{w}^{(k)}_{1}$,  respectively, and changing the summation indices from $k, l$ to $i, j$, 
we obtain
\begin{align}\label{nahm-ineq-3}
&\langle w_{3}',  \Y(u^{(k)}_{\alpha_{k}-p}\tilde{w}^{(k)}_{1}, x)w_{2}\rangle\nn
&\quad =\sum_{i\in \N}\sum_{j\in \N}\binom{\alpha_{k}-p}{i}(-1)^{i+j}
x^{i+j-\hat{\beta}_{k}+1}\cdot\nn
&\quad\quad\quad\quad\quad\quad\cdot 
\left\langle w_{3}', \left(\binom{\mathcal{N}_{g_{1}}}{j}x^{-\mathcal{N}_{g_{2}}}u^{(k)}\right)
_{\alpha_{k}+\hat{\beta}_{k}-p-1 -i-j}\Y(\tilde{w}^{(k)}_{1}, x)w_{2}\right\rangle\nn
&\quad\quad -\sum_{i\in \N}\sum_{j\in \N, j+i\ne 0}\binom{\hat{\beta}_{k}-1}{i}x^{-i-j}
\left\langle w_{3}',  \Y\left(\left(\binom{\mathcal{N}_{g_{2}}}{j}u^{(k)}\right)_{\alpha_{k}-p+i+j}
\tilde{w}^{(k)}_{1}, x\right)w_{2}\right\rangle\nn
&\quad\quad -\sum_{i\in \N}\binom{\alpha_{k}-p}{i}e^{\pi \i(\alpha_{k}
-p-i-j)} x^{\alpha_{k}-\hat{\beta}_{k}-p+1-i-j}\cdot\nn
&\quad\quad\quad\quad\quad\quad\cdot 
\langle w_{3}',  \Y(\tilde{w}^{(k)}_{1}, x)\left(e^{\pi \i\mathcal{N}_{g_{1}}}\binom{\mathcal{N}_{g_{1}}}{j}
x^{\mathcal{N}_{g_{1}}-\mathcal{N}_{g_{2}}}u^{(k)}\right)_{\hat{\beta}_{k}-1+i+j}w_{2}\rangle
\end{align}
We now prove that each term in the right-hand side of \eqref{nahm-ineq-3} is $0$. 

Since  $\alpha_{k}+\hat{\beta}_{k}=\sigma(\alpha_{k}, \hat{\beta}_{k})+\epsilon(\alpha_{k}, \hat{\beta}_{k})$,
$\alpha_{k}+\hat{\beta}_{k}-p-1 -i-j=\sigma(\alpha_{k}, 
\hat{\beta}_{k})+\epsilon(\alpha_{k}, \hat{\beta}_{k})-p-1 -i-j$, where by definition, $\sigma(\alpha_{k}, 
\hat{\beta}_{k})\in P_{V}^{g_{1}g_{2}}$ and $\epsilon(\alpha_{k}, \hat{\beta}_{k})-p-1 -i-j\le -p$. 
So 
$$ \left(\binom{\mathcal{N}_{g_{1}}}{j}x^{-\mathcal{N}_{g_{2}}}u^{(k)}\right)
_{\alpha_{k}+\hat{\beta}_{k}-p-1 -i-j}\Y(\tilde{w}^{(k)}_{1}, x)w_{2}\in C_{p}(V)\{x\}[\log x]
\subset C_{\min(p, q)}(V)\{x\}[\log x].$$
Since  $w_{3}'\in C_{\min(p, q)}(W_{3})^{\bot}$, we see that the first term in the right-hand side of 
\eqref{nahm-ineq-3} is $0$. 

Since $\binom{\mathcal{N}_{g_{2}}}{j}u^{(k)}$ is also homogeneous with respect to the 
weight grading and its weight is equal to $\wt u^{(k)}$, 
we have 
$$\left(\binom{\mathcal{N}_{g_{2}}}{j}u^{(k)}\right)_{\alpha_{k}-p+i+j}\tilde{w}^{(k)}_{1}\in (W_{1})
_{[\swt u^{(k)}-\alpha_{k}+p-1-i+j+p_{k}]}$$
for $i, j\in \N$, $i+j\ne 0$.  We have also proved above $\wt u^{(k)}-\alpha_{k}+p-1+p_{k}=n_{1}$. So we have 
$$\Re(\wt u^{(k)}-\alpha_{k}+p-1-i+p_{k}+n_{2})=\Re(n_{1}+n_{2})-i<\Re(n_{1}+n_{2})\le N_{0}^{1}+N_{2}^{0}+ m$$
for $i\in \Z_{+}$.
By the induction assumption, 
$$\left\langle w_{3}',  \Y\left(\left(\binom{\mathcal{N}_{g_{2}}}{j}u^{(k)}\right)_{\alpha_{k}-p+i+j}
\tilde{w}^{(k)}_{1}, x\right)w_{2}\right\rangle=0.$$
Hence the second term in the right-hand side of 
\eqref{nahm-ineq-3} is $0$. 

Since $w_{2}\in (W_{2})_{[n_{2}]}$ and the coefficients of $e^{\pi \i\mathcal{N}_{g_{1}}}\binom{\mathcal{N}_{g_{1}}}{j}
x^{\mathcal{N}_{g_{1}}-\mathcal{N}_{g_{2}}}u^{(k)}$ are homogeneous with weight $\wt u^{(k)}$, we have
$$\left(e^{\pi \i\mathcal{N}_{g_{1}}}\binom{\mathcal{N}_{g_{1}}}{j}
x^{\mathcal{N}_{g_{1}}-\mathcal{N}_{g_{2}}}u^{(k)}\right)
_{\hat{\beta}_{k}-1+i+j}w_{2}\in (W_{2})_{[ \swt u^{(k)}-\hat{\beta}_{k}-i-j+n_{2}]}.$$
We also know that $\tilde{w}_{1}^{(k)} \in (W_{1})_{[p_{k}]}$. Using $\wt u^{(k)}+p_{k}=n_{1}+\alpha_{k}-p+1$
proved above, $\Re(\alpha_{k}), \Re(\hat{\beta}_{k})\in [0, 1)$, and $p\in 2+\N$, we have 
\begin{align*}
\Re(p_{k}+ \wt u^{(k)}-\hat{\beta}_{k}-i+n_{2})&=\Re(n_{1}+n_{2}+\alpha_{k}-p+1-\hat{\beta}_{k}-i)\nn
&<\Re(n_{1}+n_{2})\nn
&\le N_{0}^{1}+N_{2}^{0}+ m.
\end{align*}
Then by the induction assumption, 
$$\langle w_{3}',  \Y(\tilde{w}^{(k)}_{1}, x)\left(e^{\pi \i\mathcal{N}_{g_{1}}}\binom{\mathcal{N}_{g_{1}}}{j}
x^{\mathcal{N}_{g_{1}}-\mathcal{N}_{g_{2}}}u^{(k)}\right)_{\hat{\beta}_{k}-1+i+j}w_{2}\rangle=0$$
 and thus the 
third term in the right-hand side of 
\eqref{nahm-ineq-3} is  $0$. 

We have proved that each term in the right-hand side of 
\eqref{nahm-ineq-3} is $0$. So the left-hand side of 
\eqref{nahm-ineq-3} is also $0$. 

Similarly, we first rewrite \eqref{jacobi-twisted-int-2} as
\begin{align}\label{jacobi-twisted-int-5}
&\sum_{k\in \N}\sum_{l\in \N}\binom{\alpha_{1}-n_{1}}{k}(-x)^{k+l}
\left(\binom{\mathcal{N}_{g_{1}}}{j}e^{\pi \i\mathcal{N}_{g_{1}}}x^{-\mathcal{N}_{g_{1}}}v\right)
_{\alpha_{1}+\alpha_{2}-n_{1}-n_{2}-k-l}\Y(w_{1}, x)\nn
&\quad \quad-\sum_{k\in \N}\sum_{l\in \N}\binom{\alpha_{1}-n_{1}}{k}
e^{\pi \i (\alpha_{1}-n_{1}-k-l)}x^{\alpha_{1}-n_{1}-k-l}
\Y(w_{1}, x)\left(\binom{\mathcal{N}_{g_{1}}}{j}v\right)_{\alpha_{2}-n_{2}+k+l}\nn
&\quad=\sum_{k\in \N}\sum_{l\in \N}\binom{\alpha_{2}-n_{2}}{k}
x^{\alpha_{2}-n_{2}-k-l}\Y\left(\left(\binom{\mathcal{N}_{g_{2}}}{j}x^{\mathcal{N}_{g_{2}}}
e^{\pi \i\mathcal{N}_{g_{1}}}x^{-\mathcal{N}_{g_{1}}}v\right)_{\alpha_{1}-n_{1}+k+l}w_{1}, x\right)
\end{align}
by replacing $v$ in \eqref{jacobi-twisted-int-2} by $e^{\pi \i\mathcal{N}_{g_{1}}}x^{-\mathcal{N}_{g_{1}}}v$.
Then using \eqref{jacobi-twisted-int-5} with $v$, $\alpha_{1}$, $\alpha_{2}$, 
$n_{1}$, $n_{2}$, and $w_{1}$ taken to be 
$v^{(l)}$, $\hat{\alpha}_{l}$, $\beta_{l}$, $1$, $q$,
$w_{1}$, respectively, 
we have
\begin{align}\label{nahm-ineq-4}
&\langle w_{3}',  \Y(w_{1}, x)v^{(l)}_{\beta_{l}-q}\tilde{w}^{(l)}_{2}\rangle\nn
&\quad =\sum_{i\in \N}\sum_{j\in \N}\binom{\hat{\alpha}_{l}-1}{i}(-1)^{i+j}e^{\pi \i(\hat{\alpha}_{l}
-1)}
x^{i+j-\hat{\alpha}_{l}+1}\cdot\nn
&\quad\quad\quad\quad\quad\quad\cdot 
\left\langle w_{3}', \left(\binom{\mathcal{N}_{g_{1}}}{j}e^{\pi \i\mathcal{N}_{g_{1}}}
x^{-\mathcal{N}_{g_{1}}}v^{(l)}\right)_{\hat{\alpha}_{l}+\beta_{l}-1-q -i-j}\Y(w_{1}, x)\tilde{w}^{(l)}_{2}\right\rangle\nn
&\quad\quad -\sum_{i\in \N}\sum_{j\in \N}\binom{\beta_{l}-q}{i}e^{\pi \i(\hat{\alpha}_{l}
-1)}x^{-\hat{\alpha}_{l}+\beta_{l}+1-q-i-j}\cdot\nn
&\quad\quad\quad\quad\quad\quad\cdot 
\left\langle w_{3}',  \Y\left(\left(\binom{\mathcal{N}_{g_{2}}}{j}x^{\mathcal{N}_{g_{2}}}
e^{\pi \i\mathcal{N}_{g_{1}}}x^{-\mathcal{N}_{g_{1}}}v^{(l)}\right)
_{\hat{\alpha}_{l}-1+i+j}w_{1}, x\right)\tilde{w}^{(l)}_{2}\right\rangle\nn
&\quad\quad -\sum_{i\in \N}\sum_{j\in \N, j+i\ne 0}\binom{\hat{\alpha}_{l}-1}{i}e^{\pi \i (i+j)} x^{-i-j}
\left\langle w_{3}',  \Y(w_{1}, x)\left(\binom{\mathcal{N}_{g_{1}}}{j}v^{(l)}\right)_{\beta_{l}-q+i+j}\tilde{w}^{(l)}_{2}
\right\rangle
\end{align}
We now prove that the right-hand side of \eqref{nahm-ineq-4} is $0$.

Since $\hat{\alpha}_{l}+\beta_{l}=\sigma(\hat{\alpha}_{l}, \beta_{l})+\epsilon(\hat{\alpha}_{l}, \beta_{l})$,
$\hat{\alpha}_{l}+\beta_{l}-1-q -i=\sigma(\hat{\alpha}_{l}, \beta_{l})+\epsilon(\hat{\alpha}_{l}, \beta_{l})
-1-q -i$, where by definition, $\sigma(\hat{\alpha}_{l}, \beta_{l})\in P_{V}^{g_{1}g_{2}}$ 
and $\epsilon(\hat{\alpha}_{l}, \beta_{l})-1-q -i\le -q$. 
So 
\begin{align*}
\left(\binom{\mathcal{N}_{g_{1}}}{j}e^{\pi \i\mathcal{N}_{g_{1}}}
x^{-\mathcal{N}_{g_{1}}}v^{(l)}\right)_{\hat{\alpha}_{l}+\beta_{l}-1-q -i-j}\Y(w_{1}, x)\tilde{w}^{(l)}_{2}
&\in C_{q}(V)\{x\}[\log x]\nn
&\subset C_{\min(p, q)}(V)\{x\}[\log x].
\end{align*}
Since  $w_{3}'\in C_{\min(p, q)}(W_{3})^{\bot}$, we see that the first term in the right-hand side of 
\eqref{nahm-ineq-4} is $0$. 

Since $w_{1}\in (W_{1})_{[n_{1}]}$ and the coefficients of $\binom{\mathcal{N}_{g_{2}}}{j}x^{\mathcal{N}_{g_{2}}}
e^{\pi \i\mathcal{N}_{g_{1}}}x^{-\mathcal{N}_{g_{1}}}v^{(l)}$ are homogeneous with weight $\wt v^{(l)}$, we have
$$\left(\binom{\mathcal{N}_{g_{2}}}{j}x^{\mathcal{N}_{g_{2}}}
e^{\pi \i\mathcal{N}_{g_{1}}}x^{-\mathcal{N}_{g_{1}}}v^{(l)}\right)_{\hat{\alpha}_{l}-1+i+j}w_{1}
\in (W_{1})_{[ \swt v^{(l)}-\hat{\alpha}_{l}-i-j+n_{1}]}.$$
We also know that $\tilde{w}_{2}^{(l)} \in (W_{2})_{[q_{l}]}$. Using $\wt v^{(l)}+q_{l}=n_{2}+\beta_{l}-q+1$
proved above, $\Re(\hat{\alpha}_{l}), \Re(\beta_{l})\in [0, 1)$, and $q\in 2+\N$, we have 
\begin{align*}
\Re(\wt v^{(l)}-\hat{\alpha}_{l}-i-j+n_{1}+q_{l})&=\Re(n_{1}+n_{2}-\hat{\alpha}_{l}+\beta_{l}-q+1-i-j)\nn
&<\Re(n_{1}+n_{2})\nn
&\le N_{0}^{1}+N_{2}^{0}+ m.
\end{align*}
Then by the induction assumption, 
$$\left\langle w_{3}',  \Y\left(\left(\binom{\mathcal{N}_{g_{2}}}{j}x^{\mathcal{N}_{g_{2}}}
e^{\pi \i\mathcal{N}_{g_{1}}}x^{-\mathcal{N}_{g_{1}}}v^{(l)}\right)
_{\hat{\alpha}_{l}-1+i+j}w_{1}, x\right)\tilde{w}^{(l)}_{2}\right\rangle=0$$
 and thus the second term in the right-hand side of 
\eqref{nahm-ineq-4} is $0$. 

Since $\binom{\mathcal{N}_{g_{1}}}{j}v^{(l)}$ is homogeneous with weight equal to $\wt v^{(l)}$, 
$$\left(\binom{\mathcal{N}_{g_{1}}}{j}v^{(l)}\right)_{\beta_{l}-q+i+j}\tilde{w}^{(l)}_{2}\in (W_{2})
_{[\swt v^{(l)}-\beta_{l}+q-1-i-j+q_{l}]}$$
for  $i, j\in \N$, $i+j\ne 0$.  We have proved above $\wt v^{(l)}-\beta_{l}+q-1+q_{l}=n_{2}$. So we have 
$$\Re(n_{1}+\wt v^{(l)}-\beta_{l}+q-1-i-j+q_{l})=\Re(n_{1}+n_{2})-i-j<\Re(n_{1}+n_{2})\le N_{0}^{1}+N_{2}^{0}+ m$$
for $i, j\in \N$, $i+j\ne 0$.
By the induction assumption, 
$$\left\langle w_{3}',  \Y(w_{1}, x)\left(\binom{\mathcal{N}_{g_{1}}}{j}v^{(l)}\right)_{\beta_{l}-q+i+j}\tilde{w}^{(l)}_{2}
\right\rangle=0.$$
Thus the third term in the right-hand side of 
\eqref{nahm-ineq-4} is $0$. 

We have proved that the right-hand side of \eqref{nahm-ineq-4} is  $0$. 
So we obtain the left-hand side of \eqref{nahm-ineq-4} is $0$. 

Now we have
\begin{align*}
\langle w_{3}', \Y(w_{1}, z)w_{2}\rangle
&=\sum_{k=1}^{s}\langle w_{3}',  \Y(u^{(k)}_{\alpha_{k}-2}\tilde{w}^{(k)}_{1}, x)w_{2}\rangle
+\langle w_{3}',  \Y(\tilde{w}_{1}, z)w_{2}\rangle\nn
&=\langle w_{3}',  \Y(\tilde{w}_{1}, z)w_{2}\rangle\nn
&=\sum_{l=1}^{t}\langle w_{3}',  \Y(\tilde{w}_{1}, z)v^{(l)}_{-1}\tilde{w}^{(l)}_{2}\rangle
+\langle w_{3}',  \Y(\tilde{w}_{1}, z)\tilde{w}_{2}\rangle\nn
&=\langle w_{3}',  \Y(\tilde{w}_{1}, z)\tilde{w}_{2}\rangle\nn
&=0, 
\end{align*}
proving  \eqref{int-w-1-w-2} 
 for all $w_{1}\in W_{1}$ and 
$w_{2}\in W_{2}$.

Using the $L(-1)$-derivative property for $\Y$, we have 
$$\langle w_{3}', \Y(w_{1}, \xi)w_{2}\rangle
=\langle w_{3}', \Y(e^{(\xi -z)L_{W_{1}}(-1)}w_{1}, z)w_{2}\rangle=0$$
on the region $|\xi-z|<|z|$. But $\langle w_{3}', \Y(w_{1}, \xi)w_{2}\rangle$
is an analytic function of $\xi$. This analytic function equal to $0$ 
on a region means that it is equal to $0$ on its domain. So 
we obtain $\langle w_{3}', \Y(w_{1}, \xi)w_{2}\rangle=0$ on the region $\xi\ne 0$. 
Thus
$\langle w_{3}', \Y_{n, k}(w_{1})w_{2}\rangle=0$ for 
$w_{1}\in W_{1}$, 
$w_{2}\in W_{2}$,
$n\in \C$, $k=1, \dots, K$. 
Since $\Y$ is surjective, 
we must have $w_{3}'=0$, proving the injectivity of $f$. 
\end{proof}

\begin{rema}
{\rm The inequality \eqref{nahm-ineq} is an analogue of the (untwisted) Nahm inequality 
(3.8) in Theorem 3.5 in \cite{H-C1-vtc}. It is an ananlogue but not a generalization because 
$p, q\ne 1$ in \eqref{nahm-ineq}. 
In the case that $g_{1}$ and $g_{2}$ are of finite orders and $p=q=2$, \eqref{nahm-ineq} has been proved 
in \cite{YZ}.}
\end{rema}

\begin{rema}
{\rm By Proposition \ref{int-weak-generalized}, ``a generalized surjective product 
$W_{3}$'' in Theorem \ref{n-W-1-W-2} can be replaced by ``a weak surjective product 
$W_{3}$.'' }
\end{rema}

\setcounter{equation}{0}

\section{$P(z)$-tensor products of $C_{n}$-cofinite grading-restricted generalized twisted $V$-modules}

In this section, we consider two categories of grading-restricted generalized twisted $V$-modules 
for a M\"{o}bius vertex algebra or a quasi-vertex operator algebra $V$ (see \cite{FHL} and \cite{HLZ1})
and construct $P(z)$-tensor product bifunctors for these categories. 

In this section, $V$ is a M\"{o}bius vertex algebra. Note that automorphisms of 
$V$ also commute with the operator $L_{V}(1)$. Let $G$ be an abelian group 
of automorphisms of $V$ and $n\in 2+\N$. Let $\mathcal{C}_{n}^{G}$ be the category of 
$C_{n}$-cofinite grading-restricted generalized 
$g$-twisted $V$-modules for $g\in G$ and let $\widetilde{\mathcal{C}}_{n}^{G}$ be the category of 
$C_{n}$-cofinite grading-restricted generalized 
$g$-twisted $V$-modules with $G$-actions for $g\in G$. 
In Subsection 5.1, for $z\in \C^{\times}$,  we construct 
a $P(z)$-tensor product bifunctor for the category  $\mathcal{C}_{n}^{G}$. Then in Subsection 5.2, we
construct a $P(z)$-tensor product bifunctor for the category $\widetilde{\mathcal{C}}_{n}^{G}$.

In the case that $V$ is $C_{2}$-cofinite and of positive energy, by the main result in 
\cite{T1}, every finitely-generated lower-bounded generalized twisted $V$-module is $C_{n}$-cofinite
for $n\in 2+\N$. 
On the other hand, by Theorem \ref{C-2-gr-wk}, for $n\in 2+\N$, 
every $C_{n}$-cofinite lower-bounded
$g$-twisted generalized $V$-module is finitely generated. So in this case, 
the category $\mathcal{C}_{n}^{G}$ (or $\widetilde{\mathcal{C}}_{n}^{G}$)
is the same as the category of finitely-generated lower-bounded generalized twisted $V$-modules
(or the category of finitely-generated lower-bounded generalized twisted $V$-modules with $G$-actions).
Note that in this case, $\mathcal{C}_{n}^{G}$ and $\widetilde{\mathcal{C}}_{n}^{G}$ are independent of $n$. 

\subsection{$P(z)$-tensor product bifunctor for $\mathcal{C}_{n}^{G}$}

In this subsection, we construct a $P(z)$-tensor product bifunctor for $\mathcal{C}_{n}^{G}$.
We first recall twisted $P(z)$-intertwining maps in \cite{DH}. 

\begin{defn}
{\rm Let $g_{1}, g_{2}$ be commuting automorphisms of $V$, 
$W_{1}$, $W_{2}$, $W_{3}$ generalized $g_{1}$-, $g_{2}$-,
$g_{1}g_{2}$-twisted $V$-modules, respectively,
and  $z\in \C^{\times}$. A {\it twisted $P(z)$-intertwining map of type 
$\binom{W_{3}}{W_{1}W_{2}}$} is a linear 
map $I: W_{1}\otimes W_{2}\to \overline{W}_{3}$ 
given by
$I(w_{1}\otimes w_{2})=\Y(w_{1}, z)w_{2}$ for
$w_{1}\in W_{1}$ and $w_{2}\in W_{2}$, where 
$\Y$ is a twisted intertwining operator  of type 
$\binom{W_{3}}{W_{1}W_{2}}$.}
\end{defn}

Next we recall the definition of $P(z)$-tensor product in \cite{DH}
when the category is $\mathcal{C}_{n}^{G}$. 

\begin{defn}
{\rm Let $g_{1}, g_{2}\in G$
and let $W_{1}$ and $W_{2}$ be  grading-restricted generalized 
$g_{1}$- and $g_{2}$-twisted $V$-modules,
respectively,  in the category $\mathcal{C}_{n}^{G}$. A {\it $P(z)$-product of $W_{1}$ 
and $W_{2}$ in $\mathcal{C}_{n}^{G}$} 
is a pair $(W_{3}, I)$ consisting of a $C_{n}$-cofinite  grading-restricted generalized 
$g_{1}g_{2}$-twisted $V$-module  $W_{3}$ and 
a twisted $P(z)$-intertwining map $I$ of type 
$\binom{W_{3}}{W_{1}W_{2}}$. A {\it $P(z)$-tensor product 
of $W_{1}$ and $W_{2}$ in $\mathcal{C}_{n}^{G}$} is a $P(z)$-product $(W_{1}\boxtimes_{P(z)}W_{2},
\boxtimes_{P(z)})$ satisfying the following universal property:
For any $P(z)$-product $(W_{3}, I)$ of $W_{1}$ and $W_{2}$,
there exists a unique module map $f: W_{1}\boxtimes_{P(z)}W_{2}\to W_{3}$
such that we have the commutative diagram 
$$\begin{tikzcd}
W_{1}\otimes W_{2}\arrow[swap]{d}{\boxtimes_{P(z)}}
\arrow{r}{I}&\overline{W}_{3}\\
\overline{W_{1}\boxtimes_{P(z)}W_{2}}\arrow[swap]{ru}{\bar{f}}&
\end{tikzcd}$$
where $\bar{f}$ is the natural extension of $f$ to 
$\overline{W_{1}\boxtimes_{P(z)}W_{2}}$.}
\end{defn}

In \cite{DH}, a construction of a $P(z)$-tensor product bifunctor  using twisted intertwining 
maps is given based on some assumptions (see Assumption 4.4 in \cite{DH}) as mentioned 
above in the introduction. But 
the first two assumptions in  Assumption 4.4 in \cite{DH} do not hold for the 
category $\mathcal{C}_{n}^{G}$. On the other hand, many of the results and their proofs in \cite{DH} 
still hold in the general setting in our case. 
We now give a construction of the $P(z)$-tensor product bifunctor for the 
category $\mathcal{C}_{n}^{G}$ by using the results in \cite{DH} that still hold, the results in Sections 3 and 4,
and the method in \cite{H-C1-vtc}. Note that for a lower-bounded generalized twisted $V$-module, we 
still have the contragredient of this module. 
 
We recall the construction in \cite{DH} in the case that the category is 
$\mathcal{C}_{n}^{G}$, even though  Assumption 4.4 in \cite{DH} does hold for the 
category $\mathcal{C}_{n}^{G}$. Given a $P(z)$-product $(W_{3}, I)$ of $W_{1}$ and $W_{2}$  in $\mathcal{C}_{n}^{G}$,
for $w_{3}'\in W_{3}'$, we have an element $\lambda_{I, w_{3}'}\in 
(W_{1}\otimes W_{2})^{*}$ defined by
\begin{equation}\label{lambda-I-w-3-'}
\lambda_{I, w_{3}'}(w_{1}\otimes w_{2})
=\langle w_{3}', I(w_{1}\otimes w_{2})\rangle
\end{equation}
for $w_{1}\in W_{1}$ and $w_{2}\in W_{2}$. 
Let $W_{1}\hboxtr_{P(z)}W_{2}$ be the subspace of 
 $(W_{1}\otimes W_{2})^{*}$ spanned by
$\lambda_{I, w_{3}'}$ for all $P(z)$-products $(W_{3}, I)$
and $w_{3}'\in W_{3}'$. We define 
$(W_{1}\hboxtr_{P(z)}W_{2})_{[n]}^{[\alpha]}$ to be the subspace of $W_{1}\hboxtr_{P(z)}W_{2}$
spanned by $\lambda_{I, w_{3}'}$ for all $P(z)$-products $(W_{3}, I)$
and $w_{3}'\in w_{3}'\in (W_{3}')_{[n]}^{[\alpha]}\subset W_{3}'$ 
for $n\in \C$ and $\alpha\in P_{W_{3}'}^{g}$. Then 
$$W_{1}\hboxtr_{P(z)}W_{2}=\coprod_{n\in \C}\coprod_{\alpha+\Z\in \C/\Z}
(W_{1}\hboxtr_{P(z)}W_{2})_{[n]}^{[\alpha]}.$$

We define a vertex operator map 
$$Y_{W_{1}\hboxtr_{P(z)}W_{2}}:
V\otimes (W_{1}\hboxtr_{P(z)}W_{2})\to 
(W_{1}\hboxtr_{P(z)}W_{2})\{x\}[\log x]$$
by
\begin{equation}\label{Y-W-1-W_2}
Y_{W_{1}\hboxtr_{P(z)}W_{2}}(v, x)\lambda_{I, w_{3}'}
=\lambda_{I, Y_{W_{3}'}
(v, x)w_{3}'}
\end{equation}
for $v\in V$ and $\lambda_{I, w_{3}'}\in W_{1}\hboxtr_{P(z)}W_{2}$.
We recall the following result in  \cite{DH}:

\begin{prop}[Proposition 4.3 in \cite{DH}] 
The pair $(W_{1}\hboxtr_{P(z)}W_{2}, 
Y^{g_{1}g_{2}}_{W_{1}\hboxtr_{P(z)}W_{2}})$ is a generalized 
$(g_{1}g_{2})^{-1}$-twisted $V$-module. 
\end{prop}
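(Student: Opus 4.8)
The plan is to verify the defining axioms of a generalized $(g_1g_2)^{-1}$-twisted $V$-module for the pair $(W_1\hboxtr_{P(z)}W_2,\,Y_{W_1\hboxtr_{P(z)}W_2})$ by transporting each axiom from the contragredient modules $W_3'$ through the defining formula \eqref{Y-W-1-W_2}. First I would record that since each $P(z)$-product $(W_3,I)$ has $W_3$ a grading-restricted generalized $g_1g_2$-twisted $V$-module (lower-bounded in particular), its contragredient $W_3'$ exists and is a generalized $(g_1g_2)^{-1}$-twisted $V$-module, with twisted vertex operator map $Y_{W_3'}$ involving $\mathcal{L}_{(g_1g_2)^{-1}} = -\mathcal{L}_{g_1g_2} = -(\mathcal{L}_{g_1}+\mathcal{L}_{g_2})$; this is where the inverse automorphism enters. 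I would then check that \eqref{Y-W-1-W_2} is well defined: if $\sum_j \lambda_{I_j,(w_3^{(j)})'} = 0$ in $(W_1\otimes W_2)^*$, then pairing with $Y_{W_j}$-images and using the $P(z)$-intertwining-map property (equivalently, the Jacobi identity for the underlying $\Y_j$) one shows $\sum_j\lambda_{I_j,Y_{W_3'}(v,x)(w_3^{(j)})'}=0$. This well-definedness is really the crux, and it is exactly what the dual/contragredient pairing is designed to make work.

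Next I would run through the module axioms in order. The lower-truncation condition and the $L(0)$-grading condition follow from the grading defined on $W_1\hboxtr_{P(z)}W_2$ via the gradings on the $W_3'$, using the $L_{W_3'}(0)$-grading condition together with the definition of $(W_1\hboxtr_{P(z)}W_2)_{[n]}^{[\alpha]}$. The identity property $Y_{W_1\hboxtr_{P(z)}W_2}(\mathbf{1},x)\lambda_{I,w_3'} = \lambda_{I,Y_{W_3'}(\mathbf{1},x)w_3'} = \lambda_{I,w_3'}$ is immediate. The equivariance property for $(g_1g_2)^{-1}$ transports directly from the equivariance property of $Y_{W_3'}$ as a $(g_1g_2)^{-1}$-twisted operator. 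The $L(0)$-, $L(-1)$-commutator and $L(-1)$-derivative formulas are obtained by applying the corresponding formulas for $Y_{W_3'}$ to the right-hand side of \eqref{Y-W-1-W_2} after one sets up $L_{W_1\hboxtr_{P(z)}W_2}(0)$ and $L_{W_1\hboxtr_{P(z)}W_2}(-1)$ by the analogous dual formulas $L_{W_1\hboxtr_{P(z)}W_2}(j)\lambda_{I,w_3'} = \lambda_{I,L_{W_3'}(j)w_3'}$ for $j\in\{0,-1\}$, which again require a short well-definedness check of the same flavor as above.

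The heart of the argument is the Jacobi identity \eqref{jacobi-1} for $Y_{W_1\hboxtr_{P(z)}W_2}$. I would reduce it, as in \cite{DH} and \cite{H-C1-vtc}, to the Jacobi identity already satisfied by $Y_{W_3'}$: for $u,v\in V$, applying both sides of the desired identity to an arbitrary $\lambda_{I,w_3'}$ and unwinding via \eqref{Y-W-1-W_2} turns the assertion into the statement that $\lambda_{I,(\cdot)}$ intertwines the two sides of the Jacobi identity for $Y_{W_3'}$, which holds because $W_3'$ is a $(g_1g_2)^{-1}$-twisted module; the $\mathcal{L}_{(g_1g_2)^{-1}}$ appearing in the generating-function factor $\left(\frac{x_2+x_0}{x_1}\right)^{\mathcal{L}_{(g_1g_2)^{-1}}}$ is the correct one since $g_1g_2$ and $(g_1g_2)^{-1}$ share the same generalized eigenspace decomposition with reciprocal eigenvalues. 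Since $G$ is abelian, $g_1$ and $g_2$ commute and hence so do $g_1g_2$ and its inverse with everything in sight, so no noncommutativity obstruction arises.

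The main obstacle I anticipate is the well-definedness of $Y_{W_1\hboxtr_{P(z)}W_2}$ (and of $L_{W_1\hboxtr_{P(z)}W_2}(0),L_{W_1\hboxtr_{P(z)}W_2}(-1)$): one must show that the prescription on generators $\lambda_{I,w_3'}$ descends to a well-defined operator on the span, i.e. that it annihilates all linear relations among the $\lambda_{I,w_3'}$. The standard route, following \cite{DH}, is to observe that $\lambda_{I,w_3'}$ is determined by its values $\langle w_3', I(w_1\otimes w_2)\rangle = \langle w_3',\Y(w_1,z)w_2\rangle$, that the action of $Y_{W_3'}(v,x)$ on $w_3'$ translates under the contragredient pairing into an action on $\Y$-correlators expressible purely in terms of $\langle w_3', \Y(w_1',z)w_2'\rangle$ for various $w_1',w_2'$ (by the Jacobi identity for $\Y$ and the opposite-vertex-operator formula defining the contragredient twisted module), and hence any relation $\sum_j\lambda_{I_j,(w_3^{(j)})'}=0$ is preserved. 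I would carry this out carefully since it is the only step with genuine content; the rest is bookkeeping of axiom-by-axiom transport through \eqref{Y-W-1-W_2}.
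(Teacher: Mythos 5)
The paper gives no proof of this proposition: it is quoted verbatim from \cite{DH} (Proposition 4.3 there), so there is no internal argument to compare against. Your reconstruction follows the standard route of \cite{DH} and of \cite{HLZ3} in the untwisted case, and the axiom-by-axiom transport through \eqref{Y-W-1-W_2} is the right bookkeeping. The one point worth sharpening is exactly the one you flag as the crux: in the actual construction the action of $V$ is first defined on \emph{all} of $(W_{1}\otimes W_{2})^{*}$ by an intrinsic ``opposite vertex operator'' formula (pairing $\lambda$ against modified arguments $w_{1}'\otimes w_{2}'$ obtained from the contragredient formula and the duality/Jacobi identity for $\Y$), and one then \emph{proves} that this ambient action sends $\lambda_{I,w_{3}'}$ to $\lambda_{I,Y_{W_{3}'}(v,x)w_{3}'}$, i.e.\ that it preserves the span. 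With that ordering, the well-definedness you worry about is automatic, and \eqref{Y-W-1-W_2} becomes a derived identity rather than a definition on generators; your last paragraph describes precisely this resolution, so the gap is one of presentation rather than substance. Two small cautions: (i) $\mathcal{L}_{(g_{1}g_{2})^{-1}}=-\mathcal{L}_{g_{1}g_{2}}=-(\mathcal{L}_{g_{1}}+\mathcal{L}_{g_{2}})$ holds only up to integer shifts on each common generalized eigenspace, because of the normalization $\Re(\alpha)\in[0,1)$ built into $\mathcal{S}_{g}$ (cf.\ the $\epsilon(\alpha_{1},\alpha_{2})$ bookkeeping in Section 2); this does not affect $e^{2\pi\i\mathcal{L}}$ but does affect which powers of the formal variables appear, so the exponents in the Jacobi identity for $Y_{W_{3}'}$ must use the correctly normalized $\mathcal{L}_{(g_{1}g_{2})^{-1}}$. (ii) The directness of the sum $\coprod_{n}\coprod_{\alpha}(W_{1}\hboxtr_{P(z)}W_{2})_{[n]}^{[\alpha]}$ also needs the ambient-space viewpoint (homogeneity is detected by intrinsically defined operators on $(W_{1}\otimes W_{2})^{*}$), since a priori one $\lambda$ could arise from $w_{3}'$ of different weights in different products.
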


Note that even though we have not 
shown that $W_{1}\hboxtr_{P(z)}W_{2}$ is grading-restricted yet,  its contragredient 
$(W_{1}\hboxtr_{P(z)}W_{2})'$ is still well defined. 
Let $W_{1}\boxtimes_{P(z)}W_{2}=(W_{1}\hboxtr_{P(z)}W_{2})'$.
The results and proofs in \cite{DH} in fact shows that $\boxtimes_{P(z)}$ 
gives a functor from 
$\mathcal{C}_{n}^{G}\times \mathcal{C}_{n}^{G}$ to the category of generalized 
$g$-twisted $V$-module for $g\in G$. We want to show in particular that the image of this functor 
is  in $\mathcal{C}_{n}^{G}$. 

We have the candidate  $W_{1}\boxtimes_{P(z)}W_{2}$ for our $P(z)$-tensor product 
twisted module.
To obtain  a $P(z)$-tensor product of $W_{1}$ and $W_{2}$, we also need 
a $P(z)$-intertwining map $\boxtimes_{P(z)}$. 
We recall the construction of $\boxtimes_{P(z)}$ in \cite{DH}. 

The construction of $\boxtimes_{P(z)}$ in \cite{DH} is given after Assumption 4.4 in \cite{DH}.
But this construction 
works without Assumption 4.4. In fact,  Assumption 4.4 in \cite{DH} contains
three assumptions. In our case, the first and second assumptions in Assumption 4.4 in \cite{DH}
do not hold. But we will still be able to prove below 
that $W_{1}\boxtimes_{P(z)}W_{2}$ is a $P(z)$-tensor product of $W_{1}$ and $W_{2}$ in
the category $\mathcal{C}_{n}^{G}$. 

We need Proposition 4.5 in \cite{DH}. Let $W$ be a generalized 
$(g_{1}g_{2})^{-1}$-twisted $V$-module
and $f: W\to W_{1}\hboxtr_{P(z)}W_{2}$ a $V$-module map. Note that in \cite{DH},
$W$ is required to be in the category $\mathcal{C}$ considered there. 
But what we know now is that  $W_{1}\boxtimes_{P(z)}W_{2}$ is 
 a generalized $(g_{1}g_{2})^{-1}$-twisted $V$-module. 
In this section, we will use Proposition 4.5 in \cite{DH} to prove that 
 $W_{1}\boxtimes_{P(z)}W_{2}$ is $C_{n}$-cofinite grading-restricted generalized 
$(g_{1}g_{2})^{-1}$-twisted $V$-module. So we cannot assume that $W$ is an object of 
$\mathcal{C}_{n}^{G}$. 
But the construction and Proposition 4.5 in \cite{DH} works even when we do not know whether 
$W$ is an object of 
$\mathcal{C}_{n}^{G}$. 

Since the double contragredient of $W$ might not be equivalent to 
$W$, we cannot view elements of $W''$ as elements of $W$. But we 
know that an element of $W'$ is given by its pairing with all elements of $W$.
We write the pairing between $W$ and $W'$ by $\langle w, w'\rangle$ 
instead of $\langle w', w\rangle$
for $w\in W$ and $w'\in W'$. 
We define a linear map
\begin{align*}
f': W_{1}\otimes W_{2}&\to \overline{W'}=\prod_{n\in \C}W_{[n]}^{*}\nn
w_{1}\otimes w_{2}&\mapsto \Y_{f}(w_{1}, z)w_{2}
\end{align*} 
by
\begin{equation}\label{int-op-z}
\langle w, f'(w_{1}\otimes w_{2})\rangle
=(f(w))(w_{1}\otimes w_{2})
\end{equation}
for $w_{1}\in W_{1}$, $w_{2}\in W_{2}$ and 
$w\in W$. We then define another  linear map 
\begin{align*}
\Y_{f}: W_{1}\otimes W_{2}&\to W'\{x\}[\log x]\nn
w_{1}\otimes w_{2}&\mapsto f'(w_{1}\otimes w_{2})
\end{align*} 
by
\begin{equation}\label{int-op-x}
\Y_{f}(w_{1}, x)w_{2}=x^{L_{W'}(0)}
e^{-(\log z) L_{W_{3}}(0)}
\Y_{f}(x^{-L_{W_{1}}(0)}e^{(\log z) L_{W_{1}}(0)}w_{1}, z)
x^{-L_{W_{2}}(0)}e^{(\log z) L_{W_{2}}(0)}w_{2}
\end{equation}
for $w_{1}\in W_{1}$ and $w_{2}\in W_{2}$. 
Note that by our notation, $f'=\Y_{f}(\cdot z)\cdot$.

\begin{prop}[Proposition 4.5 in \cite{DH}]\label{mod-map-int-op}
The linear map $f'=\Y_{f}(\cdot z)\cdot$ given by \eqref{int-op-z}
and 
$\Y_{f}: W_{1}\otimes W_{2}\to W'\{x\}[\log x]$
given by \eqref{int-op-z} and \eqref{int-op-x}
are a twisted $P(z)$-intertwining map and 
 a twisted intertwining operator, respectively, 
of type $\binom{W'}{W_{1}W_{2}}$. In particular, 
in the case that $W=W_{1}\hboxtr_{P(z)}W_{2}$ and 
$f=1_{W_{1}\hboxtr_{P(z)}W_{2}}: W\to W_{1}\hboxtr_{P(z)}W_{2}$
is the identity map, we obtain a  twisted $P(z)$-intertwining map
$1_{W_{1}\hboxtr_{P(z)}W_{2}}'$ and a twisted 
intertwining operator $\Y_{1_{W_{1}\hboxtr_{P(z)}W_{2}}}$
of type $\binom{W_{1}\boxtimes_{P(z)}W_{2}}{W_{1}W_{2}}$.
\end{prop}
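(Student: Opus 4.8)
The plan is to verify directly that $\Y_{f}$, as defined by \eqref{int-op-z} and \eqref{int-op-x}, satisfies the four axioms in the definition of a twisted intertwining operator of type $\binom{W'}{W_{1}W_{2}}$ — the lower-truncation property, the Jacobi identity \eqref{jacobi-twisted-int-0}, the $L(0)$-commutator formula, and the $L(-1)$-derivative property — and then to read off the remaining assertions. Since $f'=\Y_{f}(\cdot,z)\cdot$ is by construction the value of $\Y_{f}$ at $x=z$ in the $0$-th branch, once $\Y_{f}$ is known to be a twisted intertwining operator, $f'$ is a twisted $P(z)$-intertwining map of the same type by the very definition of such a map; and specializing to $W=W_{1}\hboxtr_{P(z)}W_{2}$ with $f=1_{W_{1}\hboxtr_{P(z)}W_{2}}$, so that $W'=W_{1}\boxtimes_{P(z)}W_{2}$, gives the last claim. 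Thus the entire content is the four axioms for $\Y_{f}$, and I claim the argument coincides with the one for Proposition 4.5 in \cite{DH}: it uses only that $f$ is a $V$-module map and that, by the definition of $W_{1}\hboxtr_{P(z)}W_{2}$ and of $P(z)$-products in $\mathcal{C}_{n}^{G}$, every element of $W_{1}\hboxtr_{P(z)}W_{2}$ is a finite sum $\sum_{i}\lambda_{I_{i},w_{3,i}'}$ with each $I_{i}(w_{1}\otimes w_{2})=\Y_{i}(w_{1},z)w_{2}$ for a genuine twisted intertwining operator $\Y_{i}$ of type $\binom{W_{3,i}}{W_{1}W_{2}}$, $W_{3,i}\in\mathcal{C}_{n}^{G}$ — so that neither of the two parts of Assumption 4.4 of \cite{DH} that fail for $\mathcal{C}_{n}^{G}$ is needed.

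First I would check that $\Y_{f}(w_{1},x)w_{2}$ indeed lies in $W'\{x\}[\log x]$. That its coefficients lie in $W'$, and that only finitely many powers of $\log x$ occur, follows from the compatibility of $f$ with $L_{W}(0)$ and the generalized $L(0)$-grading condition on $W$; the lower-truncation property — for fixed homogeneous $w_{1},w_{2}$ the coefficient of $x^{-n-1}(\log x)^{k}$ vanishes once $\Re(n)$ is large — follows because, after the $L(0)$-conjugation in \eqref{int-op-x}, the element $f'(\cdot\otimes\cdot)\in\overline{W'}$ is supported in the bounded-below part of the $\C$-grading of $W'$ (this is exactly the point where the lower-boundedness underlying the existence of $W'$ is used), so pairing with $w\in W_{[m]}$ forces $n$ to be an affine function of $m$ with $\Re(m)$ bounded below, hence $\Re(n)$ bounded above. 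These are the grading computations of \cite{DH}.

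The core of the proof is the Jacobi identity \eqref{jacobi-twisted-int-0} for $\Y_{f}$ (with $W_{3}$ there replaced by $W'$). I would fix $v\in V$, $w_{1}\in W_{1}$, $w_{2}\in W_{2}$ and an arbitrary $w\in W$, and pair both sides against $w$. Using that the contragredient vertex operator $Y_{W'}$ is the appropriate adjoint of $Y_{W}$ (as in \cite{H-twisted-int}), the left-hand side becomes an expression in which $Y_{W}$ acts on $w$; then \eqref{int-op-x} and \eqref{int-op-z} convert the pairings with $\Y_{f}$ into the pairing of $f$ applied to that vector against $w_{1}\otimes w_{2}$; next the module-map property $f\circ Y_{W}(v,x_{0})=Y_{W_{1}\hboxtr_{P(z)}W_{2}}(v,x_{0})\circ f$ together with the defining formula \eqref{Y-W-1-W_2}, $Y_{W_{1}\hboxtr_{P(z)}W_{2}}(v,x_{0})\lambda_{I_{i},w_{3,i}'}=\lambda_{I_{i},Y_{W_{3,i}'}(v,x_{0})w_{3,i}'}$, and the adjoint description of $Y_{W_{3,i}'}$ move the vertex operator onto $\Y_{i}(w_{1},z)w_{2}\in W_{3,i}$; and what is then left, term by term in $i$, is precisely the Jacobi identity satisfied by the genuine twisted intertwining operator $\Y_{i}$ of type $\binom{W_{3,i}}{W_{1}W_{2}}$. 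The $L(0)$-commutator formula and the $L(-1)$-derivative property for $\Y_{f}$ come out in the same way but more easily, by commuting and differentiating through the $L(0)$-conjugation in \eqref{int-op-x} and invoking the compatibility of $f$ with $L_{W}(0)$ and $L_{W}(-1)$ together with the corresponding formulas for the $\Y_{i}$.

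The main obstacle I anticipate is the bookkeeping in the Jacobi-identity step: keeping track of the factors $\left(\frac{x_{1}-x_{2}}{x_{0}}\right)^{\mathcal{L}_{g_{1}}}$ and $e^{\pi\i\mathcal{L}_{g_{1}}}\left(\frac{x_{2}-x_{1}}{x_{0}}\right)^{\mathcal{L}_{g_{1}}}$ that appear in \eqref{jacobi-twisted-int-0} for $\Y_{f}$, matching them against the twist produced by passing to the contragredients (which interchanges $g_{1}g_{2}$- and $(g_{1}g_{2})^{-1}$-twistings) and against the conjugation in \eqref{int-op-x} that reintroduces the formal variable in place of the fixed point $z$, all while keeping the finitely many branch choices consistent throughout. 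Since this is exactly the computation carried out in the proof of Proposition 4.5 of \cite{DH}, and since — as noted in the text preceding this proposition — that computation does not rely on the parts of Assumption 4.4 of \cite{DH} that fail for $\mathcal{C}_{n}^{G}$, the verification goes through verbatim in the present setting.
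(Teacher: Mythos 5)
The paper gives no proof of this proposition at all—it is imported verbatim from Proposition 4.5 of \cite{DH}, with only the remark that the argument there uses neither of the two parts of Assumption 4.4 that fail for $\mathcal{C}_{n}^{G}$. Your outline correctly reconstructs that argument (reduce each axiom for $\Y_{f}$, via the module-map property of $f$, the formula \eqref{Y-W-1-W_2}, and adjointness of $Y_{W'}$, to the corresponding axiom for the genuine twisted intertwining operators $\Y_{i}$ underlying the spanning elements $\lambda_{I_{i},w_{3,i}'}$), and it isolates exactly the same point the paper makes about why the hypothesis on $W$ can be weakened, so it is consistent with the paper's treatment.
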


We denote the $P(z)$-intertwining map of type $\binom{W_{1}\boxtimes_{P(z)}W_{2}}
{W_{1}W_{2}}$ in Proposition \ref{mod-map-int-op} by $\boxtimes_{P(z)}$. Let 
$$w_{1}\boxtimes_{P(z)}w_{2n}=\boxtimes_{P(z)}(w_{1}\otimes w_{2})
=\Y(w_{1}, z)w_{2}\in \overline{W_{1}\boxtimes_{P(z)}W_{2}}$$ 
for $w_{1}\in W_{1}$ and 
$w_{2}\in W_{2}$. The element $w_{1}\boxtimes_{P(z)}w_{2}$ is the 
tensor product 
of the elements $w_{1}$ and $w_{2}$. By \eqref{int-op-z}, we have
\begin{equation}\label{int-op-x-6}
\lambda(w_{1}\otimes w_{2})
=\langle \lambda, w_{1}\boxtimes_{P(z)}w_{2}\rangle
\end{equation}
for $\lambda\in W_{1}\hboxtr_{P(z)}W_{2}$, $w_{1}\in W_{1}$ and 
$w_{2}\in W_{2}$.

We also need a result on lower-bounded
generalized   surjective  product of two $C_{n}$-cofinite grading-restricted generalized
 $V$-modules.  

\begin{prop}\label{gradings-submod}
Let $g_{1}$ and $g_{2}$ be commuting automorphisms of $V$ and
 $W_{1}$ and $W_{2}$ $C_{n}$-cofinite grading-restricted generalized $g_{1}$- and $g_{2}$-twisted 
 $V$-modules, respectively.  Then 
the lower-bounded generalized surjective products of $W_{1}$ and $W_{2}$ 
are uniformly lower bounded, that is, there exists $N_{0}\in \Z$ depending only on $W_{1}$ and $W_{2}$
 such that for any lowr-bounded
generalized surjective product $W_{3}$ of $W_{1}$ and $W_{2}$,  
$(W_{3})_{[m]}=0$ when $\Re(m)<N_{0}$. 
\begin{comment}
\item Let $W$ be a lower-bounded generalized $V$-module 
contained in  $(W_{1}\otimes W_{2})^{*}$ (equipped with the action 
$Y_{P(z)}'(\cdot, x)$ of $V$ and the corresponding actions of $L(-1)$, $L(0)$ and $L(1)$ 
on $(W_{1}\otimes W_{2})^{*}$). 
Then  $W'$ is a grading-restricted  generalized
surjective product of $W_{1}$ and $W_{2}$.
In particular,  $W\subset W_{1}\hboxtr_{P(z)}W_{1}$,
\begin{equation}\label{W-lambda-1}
W=\coprod_{i=1}^{k}\coprod_{n\in \N}
W_{[h_{i}+n]},
\end{equation}
where $h_{1}, \dots, h_{k}$ are complex numbers given above, and
$W'$ is $C_{1}$-cofinite, 
\end{enumerate}
\end{comment}
\end{prop}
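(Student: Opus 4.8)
The plan is to reduce the claim to the twisted Nahm inequality \eqref{nahm-ineq} together with the quasi-finite-dimensionality and finite-generation properties in Proposition \ref{C-2-gr-wk}. First I would fix once and for all a finite-dimensional graded subspace $M_{1}$ of $W_{1}$ with $W_{1}=C_{n}(W_{1})\oplus M_{1}$ and a finite-dimensional graded subspace $M_{2}$ of $W_{2}$ with $W_{2}=C_{n}(W_{2})\oplus M_{2}$; since $W_{1}$ and $W_{2}$ are grading-restricted (hence quasi-finite-dimensional, so lower bounded), there exist $N_{1}, N_{2}\in\Z$ with $(W_{1})_{[m]}=0$ for $\Re(m)<N_{1}$ and $(W_{2})_{[m]}=0$ for $\Re(m)<N_{2}$, and moreover $M_{1}\subset\coprod_{\Re(m)\le N_{1}'}(W_{1})_{[m]}$, $M_{2}\subset\coprod_{\Re(m)\le N_{2}'}(W_{2})_{[m]}$ for some $N_{1}',N_{2}'$. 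These data depend only on $W_{1}$ and $W_{2}$, not on $W_{3}$.

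Next, let $W_{3}$ be any lower-bounded generalized surjective product of $W_{1}$ and $W_{2}$, with surjective twisted intertwining operator $\Y$ of type $\binom{W_{3}}{W_{1}W_{2}}$. By Proposition \ref{int-weak-generalized}, $W_{3}$ is automatically a generalized $g_{1}g_{2}$-twisted $V$-module, and by Theorem \ref{n-W-1-W-2} (with $p=q=n$) together with Remark after it, $W_{3}$ is $C_{n}$-cofinite with $\dim(W_{3}/C_{n}(W_{3}))\le\dim M_{1}\dim M_{2}$. The key point is that this bound is uniform in $W_{3}$. Now I would argue that the lowest weight of $W_{3}$ is controlled: since $\Y$ is surjective, $W_{3}$ is spanned by the coefficients of $\Y(w_{1},x)w_{2}$, and using the finite generation of $W_{1}$ and $W_{2}$ from Proposition \ref{C-2-gr-wk}(2) together with the $L(0)$-commutator formula and the $L(-1)$-derivative property for $\Y$, the lowest possible weight appearing in any coefficient $\Y_{m,k}(w_{1})w_{2}$ with $w_{1}\in M_{1}$, $w_{2}\in M_{2}$ is bounded below by a quantity $\Re(\wt w_{1}+\wt w_{2}-m-1)$. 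The weights of elements of $M_{1}$ and $M_{2}$ are bounded, but the exponent $m$ can a priori be large, which is where the real work lies.

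The hard part will be bounding the exponents $m$ for which $\Y_{m,k}(w_{1})w_{2}\neq 0$ in terms of data of $W_{1}$, $W_{2}$ alone. Here I would invoke Proposition \ref{C-2-gr-wk}(2): $W_{3}$ is finitely generated (being $C_{n}$-cofinite), so it is spanned by elements of the form $v^{(1)}_{\alpha_{1}-n}\cdots v^{(i)}_{\alpha_{i}-n}w$ for $w$ in a finite-dimensional subspace $M_{3}$ of $W_{3}$; each such element has weight $\ge\Re(\wt w)$ since $n\ge 2$ and $\wt v^{(j)}\in\Z_{+}$ force $\wt v^{(j)}_{\alpha_{j}-n}$ to raise the real part of the weight. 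Thus the lowest weight of $W_{3}$ equals the lowest weight occurring in $M_{3}$, and since $\dim M_{3}\le\dim M_{1}\dim M_{2}$ is uniformly bounded, it suffices to show that any such $M_{3}$ can be taken inside a region $\coprod_{\Re(m)\le N_{0}}(W_{3})_{[m]}$ with $N_{0}$ uniform. For this I would use the Jacobi identity \eqref{jacobi-twisted-int-2} exactly as in the proof of Theorem \ref{n-W-1-W-2} (or as in \cite{H-C1-vtc}): the same downward induction on real parts of weights that establishes \eqref{int-w-1-w-2} shows that $W_{3}$ is spanned, modulo $C_{n}(W_{3})$, by the coefficients $\Y_{m,k}(w_{1})w_{2}$ with $w_{1}\in M_{1}$, $w_{2}\in M_{2}$ and $\Re(m)$ bounded above by a constant depending only on $N_{1}',N_{2}'$; combining this with the bound on $\dim(W_{3}/C_{n}(W_{3}))$ and the fact (from the $C_{n}$-cofinite module structure) that passing from a generating set to all of $W_{3}$ only raises weights, one obtains $N_{0}:=N_{1}+N_{2}-$(constant depending on $M_{1},M_{2}$) as the desired uniform lower bound. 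I expect the bookkeeping of which exponents $m$ survive — tracking the interplay of the $\alpha_{i}$, the nilpotent parts $\mathcal{N}_{g_{i}}$, and the shift $n$ — to be the only genuinely delicate step, and it parallels the argument already carried out in the proof of Theorem \ref{n-W-1-W-2}.
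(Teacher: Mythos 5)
Your reduction to the twisted Nahm inequality is the right starting point, and the observation that the weight of $\Y_{m,k}(w_1)w_2$ is $\wt w_1+\wt w_2-m-1$, so that a lower bound on weights amounts to an upper bound on $\Re(m)$, correctly identifies where the difficulty lies. But the step you flag as ``the only genuinely delicate step'' is a genuine gap, and the tool you propose does not close it. The lower-truncation property gives, for each fixed $\Y$ (hence for each fixed $W_3$), an upper bound on the $\Re(m)$ with $\Y_{m,k}(w_1)w_2\ne 0$, but that bound depends on the intertwining operator and hence on $W_3$; the downward induction in the proof of Theorem \ref{n-W-1-W-2} only establishes that an element of $C_n(W_3)^{\bot}$ vanishing on $\Y(w_1,z)w_2$ for $w_1\in M_1$, $w_2\in M_2$ vanishes identically --- it never produces an upper bound on the exponents $m$, uniform or otherwise. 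Asserting that $W_3$ is spanned modulo $C_n(W_3)$ by coefficients with $\Re(m)$ bounded by a constant depending only on $W_1$ and $W_2$ is essentially a restatement of the proposition, so the argument is circular precisely at the point where the work has to happen. (There is also a sign slip: you would need $M_3$ inside $\coprod_{\Re(m)\ge N_0}(W_3)_{[m]}$, not $\Re(m)\le N_0$.)

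The paper closes this gap with an extremal argument that has no counterpart in your proposal. One takes a surjective product $W_3^{\mathrm{max}}$ maximizing $\dim(W_3/C_n(W_3))$ (the maximum exists by the Nahm inequality) and lets $N_0$ be its lower truncation level. For an arbitrary lower-bounded surjective product $W_3$, one forms the sum $W$ of the images of $W_3'$ and $(W_3^{\mathrm{max}})'$ inside $W_1\hboxtr_{P(z)}W_2$; then $W'$ is again a surjective product, and the surjection $f:W'\to W_3^{\mathrm{max}}$ dual to the inclusion $(W_3^{\mathrm{max}})'\hookrightarrow W$ induces a surjection $W'/C_n(W')\to W_3^{\mathrm{max}}/C_n(W_3^{\mathrm{max}})$ which, by maximality, must be an isomorphism; hence $\ker f\subset C_n(W')$. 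A minimal-weight argument then gives $(W')_{[m]}=0$ for $\Re(m)<N_0$: a nonzero homogeneous element of minimal weight below $N_0$ would lie in $\ker f\subset C_n(W')$, but elements of $C_n(W')$ of that weight are built from strictly lower weights, which vanish. Since $W_3'$ embeds in $W$, the same bound transfers to $W_3$. You would need this maximality device, or some equivalent source of uniformity across all surjective products, to complete your argument.
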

\begin{proof}
We consider the set of $\dim (W_{3}/C_{1}(W_{3}))$ for all lower-bounded
generalized surjective product $W_{3}$ of $W_{1}$ and $W_{2}$.
By Theorem \ref{n-W-1-W-2},
this set of nonnegative integers 
is bounded from above by $\dim (W_{1}/C_{1}(W_{1}))\dim  (W_{2}/C_{1}(W_{2}))$. 
Hence there must be a maximum of this set. 
Let $W_{3}^{\mathrm{max}}$ be a lower-bounded
generalized surjective product  of $W_{1}$ and $W_{2}$
such that $\dim W_{3}/C_{1}(W_{3})$
is the maximum of the set. Since $W_{3}^{\mathrm{max}}$ is 
lower-bounded, there exists $N_{0}\in \Z$ such that 
$(W_{3}^{\mathrm{max}})_{[n]}=0$ when $\Re(n)<N_{0}$. 

Given any lower-bounded
generalized surjective product $W_{3}$ of $W_{1}$ and $W_{2}$, we have a 
surjective intertwining operator $\Y$ of type $\binom{W_{3}}{W_{1}W_{2}}$.
For any fixed $z\in \C^{\times}$, we have a $P(z)$-intertwining map 
$I_{\Y}=\Y(\cdot, z)\cdot$ of type $\binom{W_{3}}{W_{1}W_{2}}$.
The $P(z)$-intertwining map $I_{\Y}$ gives 
a $V$-module map $I_{\Y}'$ from $W_{3}'$ to 
the generalized $(g_{1}g_{2})^{-1}$-twisted $V$-module $W_{1}\hboxtr_{P(z)}W_{2}$
defined by 
$$(I_{\Y}'(w_{3}'))(w_{1}\otimes w_{2})=\langle w_{3}', I_{\Y}(w_{1}\otimes w_{2})\rangle
=\langle w_{3}', \Y(w_{1}, z)w_{2}\rangle$$
for $w_{1}\in W_{1}$, $w_{2}\in W_{2}$, and $w_{3}'\in W_{3}'$.
Since the intertwining operator $\Y$ is surjective, this $V$-module 
map is injective.
The image of $W_{3}'$ under this $V$-module map 
is a lower-bounded generalized $V$-submdoule of 
$W_{1}\hboxtr_{P(z)}W_{2}$.  The same is also true for 
$W_{3}^{\mathrm{max}}$.
Let $W$ be the sum of the image of $W_{3}'$ under
the $V$-module map from $W_{3}'$ to  $W_{1}\hboxtr_{P(z)}W_{2}$ 
and the image of $(W_{3}^{\mathrm{max}})'$ under 
the $V$-module map from $(W_{3}^{\mathrm{max}})'$ to  
$W_{1}\hboxtr_{P(z)}W_{2}$. Then $W$ is 
also a lower-bounded generalized $V$-submodule
of $W_{1}\hboxtr_{P(z)}W_{2}$. Let 
$J: W\to W_{1}\hboxtr_{P(z)}W_{2}$ be the inclusion map. 
Then by Proposition \ref{mod-map-int-op}, we have a twisted intertwining operator 
$\Y_{J}$
of type $\binom{W'}{W_{1}W_{2}}$. Since $J$ is injective,  $\Y_{J'}$
is  surjective. 
Since $W$ is lower bounded, $W'$ is also 
lower bounded. By Theorem \ref{n-W-1-W-2},
$W'$ is a $C_{n}$-cofinite grading-restricted 
generalized $g_{1}g_{2}$-twisted $V$-module and
$\dim W'/C_{n}(W')\le 
\dim W_{3}^{\mathrm{max}}/C_{1}(W_{3}^{\mathrm{max}})$. 

We now prove 
$(W')_{[m]}=0$ when $\Re(m)<N_{0}$. 
By definition, we have an injective $V$-module map 
from $(W_{3}^{\mathrm{max}})'$ to $W$. 
Its adjoint is a surjective $V$-module map 
$f: W'\to W_{3}^{\mathrm{max}}$.
Since $f$ is a $V$-module map, 
$f(C_{n}(W'))\subset C_{n}(W_{3}^{\mathrm{max}})$. 
Then the map $f$ induces a surjective linear map 
$\bar{f}: W'/C_{n}(W')
\to W_{3}^{\mathrm{max}}/C_{n}(W_{3}^{\mathrm{max}})$. In particular, we have 
$\dim (W'/C_{n}(W'))\ge 
\dim (W_{3}^{\mathrm{max}}/C_{n}(W_{3}^{\mathrm{max}}))$. But we already 
have $\dim (W'/C_{n}(W'))\le
\dim (W_{3}^{\mathrm{max}}/C_{n}(W_{3}^{\mathrm{max}}))$. So we obtain 
$$\dim (W'/C_{n}(W'))=
\dim (W_{3}^{\mathrm{max}}/C_{n}(W_{3}^{\mathrm{max}})).$$ 
Thus $\bar{f}$ is injective and 
$\ker \bar{f}=0$. Since $\bar{f}$ is induced from $f$, we 
obtain $\ker f\subset C_{n}(W')$. 

If $(W')_{[m]}\ne 0$ for some $m\in \C$ satisfying $\Re(m)<N_{0}$, then we can find $m_{0}\in \C$
such that $\Re(m_{0})<N_{0}$, $(W')_{m_{0}]}\ne 0$, and $(W')_{[m]]}=0$ for $\Re(m)< m_{0}-1$ 
since $W'$ is lower-bounded. 
Let $w'\in W_{[m_{0}]}^{*}$. 
Then $f(w')\in (W_{3}^{\mathrm{max}})_{[m_{0}]}$. Since $\Re(m_{0})<N_{0}$, we have $f(w')=0$
from the definition of $N_{0}$. Then $w'\in \ker f\subset C_{n}(W')$. Then $w'$ 
is a linear combination of elements of the form 
$v_{\alpha-n}\tilde{w}'$ for homogeneous $v\in V^{[\alpha]}$ and homogeneous $\tilde{w}'\in W'$.
But $\wt v_{\alpha-n}\tilde{w}'=\wt v-\alpha+n-1+\wt \tilde{w}'$. Since $w'$ is a linear combination
of such elements, we must have $m_{0}=\wt v_{\alpha-n}\tilde{w}'=\wt v-\alpha+n-1+\wt \tilde{w}'$. 
Then we have 
$\wt \tilde{w}'=m_{0}-\wt v+\alpha-n+1<m_{0}-1$. Since $(W')_{[m]]}=0$ for $\Re(m)< m_{0}-1$, 
we obtain $\tilde{w}'=0$ and therefore $w'=0$. Thus we obtain $(W')_{m_{0}]}=0$. Contradiction. 
So we have proved that  $(W')_{[m]}=0$ when $\Re(m)<N_{0}$. But 
$(W')_{[m]}=0$ when $\Re(m)<N_{0}$ implies $W_{[m]}=0$ when $\Re(m)<N_{0}$.

Since the image $I_{\Y}'(W_{3}')$ of $W_{3}'$ under $I_{\Y}'$ in $W_{1}\hboxtr_{P(z)}W_{2}$ is in $W$, 
we also have $(I_{\Y}'(W_{3}'))_{[m]}=0$ when $\Re(m)<N_{0}$. Since $\Y$ is surjective, $I_{\Y}'$
is injective. So we also have $(W_{3}')_{[m]}=0$ when $\Re(m)<N_{0}$. Thus we obtain 
$(W_{3})_{[m]}=0$ when $\Re(m)<N_{0}$.
\end{proof}

\begin{thm}\label{hboxtr-closed}
Let $g_{1}, g_{2}\in G$, 
$W_{1}$, $W_{2}$, $W_{3}$ grading-restricted generalized $g_{1}$-, $g_{2}$-,
$g_{1}g_{2}$-twisted $V$-modules, respectively, in the category $\mathcal{C}_{n}^{G}$
and  $z\in \C^{\times}$. Then $W_{1}\hboxtr_{P(z)}W_{2}$ is a 
grading-restricted generalized $(g_{1}g_{2})^{-1}$-twisted $V$-module and 
$W_{1}\boxtimes_{P(z)}W_{2}$ is a $C_{n}$-cofinite grading-restricted generalized 
$g$-twisted $V$-module.  Moreover, $(W_{1}\boxtimes_{P(z)}W_{2}, \boxtimes_{P(z)})$ is a 
$P(z)$-tensor product of $W_{1}$ and $W_{2}$ in the category $\mathcal{C}_{n}^{G}$. 
\end{thm}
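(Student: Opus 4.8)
The plan is to follow the strategy of \cite{H-C1-vtc} (used for the untwisted case there) and of \cite{HLZ3}, \cite{HLZ4}, adapting each step to the twisted setting with the twisted Nahm inequality \eqref{nahm-ineq} replacing the untwisted one. First I would establish that $W_{1}\hboxtr_{P(z)}W_{2}$ is grading-restricted. By Proposition 4.3 in \cite{DH} (quoted above) it is already a generalized $(g_{1}g_{2})^{-1}$-twisted $V$-module, so what remains is lower-boundedness and finite-dimensionality of each weight space. For lower-boundedness, I would invoke Proposition \ref{gradings-submod}: every lower-bounded generalized surjective product $W_{3}$ of $W_{1}$ and $W_{2}$ has $(W_{3})_{[m]}=0$ for $\Re(m)<N_{0}$ with $N_{0}$ uniform, hence every $\lambda_{I,w_{3}'}$ spanning $W_{1}\hboxtr_{P(z)}W_{2}$ comes from such a $W_{3}$ and lies in bounded-above weights; dualizing, $W_{1}\hboxtr_{P(z)}W_{2}$ is lower-bounded. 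For finite-dimensionality of weight spaces I would use Theorem \ref{n-W-1-W-2} in the form $p=q=n$: any lower-bounded generalized surjective product $W_{3}$ of $W_{1}$ and $W_{2}$ satisfies $\dim(W_{3}/C_{n}(W_{3}))\le \dim(W_{1}/C_{n}(W_{1}))\dim(W_{2}/C_{n}(W_{2}))<\infty$, so by Proposition \ref{C-2-gr-wk} such $W_{3}$ is quasi-finite-dimensional with a uniform bound on $\dim\left(\coprod_{\Re(m)\le N}(W_{3})_{[m]}\right)$; combining with the uniform lower bound $N_{0}$, the images of the $W_{3}'$ inside $W_{1}\hboxtr_{P(z)}W_{2}$ have uniformly bounded weight-space dimensions, so $W_{1}\hboxtr_{P(z)}W_{2}$ is quasi-finite-dimensional and in particular grading-restricted.

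Next I would deduce that $W_{1}\boxtimes_{P(z)}W_{2}=(W_{1}\hboxtr_{P(z)}W_{2})'$ is a $C_{n}$-cofinite grading-restricted generalized $g_{1}g_{2}$-twisted $V$-module. Since $W_{1}\hboxtr_{P(z)}W_{2}$ is grading-restricted (hence lower-bounded), its contragredient is a grading-restricted generalized $g_{1}g_{2}$-twisted $V$-module. For $C_{n}$-cofiniteness, the key point is that the twisted intertwining operator $\Y_{1_{W_{1}\hboxtr_{P(z)}W_{2}}}$ of type $\binom{W_{1}\boxtimes_{P(z)}W_{2}}{W_{1}W_{2}}$ produced by Proposition \ref{mod-map-int-op} (taking $W=W_{1}\hboxtr_{P(z)}W_{2}$, $f=\mathrm{id}$) is surjective — this follows because $\boxtimes_{P(z)}=1'_{W_{1}\hboxtr_{P(z)}W_{2}}$ is a twisted $P(z)$-intertwining map whose image, by \eqref{int-op-x-6} and the fact that a $\lambda\in W_{1}\hboxtr_{P(z)}W_{2}$ vanishes iff its pairing with all $w_{1}\boxtimes_{P(z)}w_{2}$ vanishes, spans $W_{1}\boxtimes_{P(z)}W_{2}$. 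Then $W_{1}\boxtimes_{P(z)}W_{2}$ is a generalized surjective product of $W_{1}$ and $W_{2}$, so by Theorem \ref{n-W-1-W-2} with $p=q=n$, $\dim(W_{1}\boxtimes_{P(z)}W_{2}/C_{n}(W_{1}\boxtimes_{P(z)}W_{2}))\le \dim(W_{1}/C_{n}(W_{1}))\dim(W_{2}/C_{n}(W_{2}))<\infty$. Hence $W_{1}\boxtimes_{P(z)}W_{2}\in\mathcal{C}_{n}^{G}$, and since it is a generalized $(g_{1}g_{2})^{-1}$-twisted module before dualizing, its contragredient is $g_{1}g_{2}$-twisted, matching the expected type.

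Finally I would verify the universal property. Given any $P(z)$-product $(W_{3},I)$ of $W_{1}$ and $W_{2}$ in $\mathcal{C}_{n}^{G}$, with $I=\Y(\cdot,z)\cdot$, the element $\lambda_{I,w_{3}'}$ of \eqref{lambda-I-w-3-'} lies in $W_{1}\hboxtr_{P(z)}W_{2}$ by construction, and $w_{3}'\mapsto\lambda_{I,w_{3}'}$ is a $V$-module map $W_{3}'\to W_{1}\hboxtr_{P(z)}W_{2}$ by \eqref{Y-W-1-W_2}; taking its adjoint and using that $W_{3}$ is grading-restricted (so $W_{3}''\cong W_{3}$) yields a module map $f\colon W_{1}\boxtimes_{P(z)}W_{2}\to W_{3}$. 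Using \eqref{int-op-x-6} one checks $\bar f(w_{1}\boxtimes_{P(z)}w_{2})=I(w_{1}\otimes w_{2})$, giving the commutative diagram. Uniqueness follows because $W_{1}\boxtimes_{P(z)}W_{2}$ is spanned by the coefficients of $w_{1}\boxtimes_{P(z)}w_{2}$ (surjectivity of $\boxtimes_{P(z)}$ just established), so a module map out of it is determined by its values on these coefficients. Bifunctoriality of $\boxtimes_{P(z)}$ then comes formally from the universal property applied to morphisms. I expect the main obstacle to be the grading-restriction argument: pinning down the uniform lower bound $N_{0}$ and the uniform weight-space dimension bound across \emph{all} lower-bounded generalized surjective products simultaneously — this is exactly where Proposition \ref{gradings-submod} and the $p=q=n$ case of Theorem \ref{n-W-1-W-2} must be combined carefully, tracking that the $P(z)$-intertwining maps $I_{\Y}'$ land inside a single finitely-generated submodule $W$ of $W_{1}\hboxtr_{P(z)}W_{2}$ whose contragredient is controlled.
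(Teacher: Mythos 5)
Your overall architecture matches the paper's: lower-boundedness of $W_{1}\hboxtr_{P(z)}W_{2}$ via the uniform bound $N_{0}$ of Proposition \ref{gradings-submod}, $C_{n}$-cofiniteness of $W_{1}\boxtimes_{P(z)}W_{2}$ via surjectivity of $\Y_{1_{W_{1}\hboxtr_{P(z)}W_{2}}}$ and Theorem \ref{n-W-1-W-2} with $p=q=n$, and the universal property via the adjoint of $I'$ together with \eqref{int-op-x-6}. The universal-property paragraph and the $C_{n}$-cofiniteness argument are essentially the paper's.

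There is, however, a genuine gap in your first paragraph, in the step establishing finite-dimensionality of the weight spaces of $W_{1}\hboxtr_{P(z)}W_{2}$. You argue that each image $I'(W_{3}')$ has weight spaces of uniformly bounded dimension and conclude that their sum $W_{1}\hboxtr_{P(z)}W_{2}$ is quasi-finite-dimensional. That inference fails: $W_{1}\hboxtr_{P(z)}W_{2}$ is the (not necessarily direct) sum of the subspaces $I'(W_{3}')$ over \emph{all} $P(z)$-products $(W_{3}, I)$, of which there may be infinitely many pairwise non-isomorphic ones, and a sum of infinitely many subspaces, each of dimension at most $D$, inside a fixed weight space of $(W_{1}\otimes W_{2})^{*}$ can perfectly well be infinite-dimensional. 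The paper avoids this by proving only lower-boundedness of $W_{1}\hboxtr_{P(z)}W_{2}$ at this stage (a vanishing condition, which does pass to arbitrary sums of the $I'(W_{3}')$), then applying Theorem \ref{n-W-1-W-2} to the single module $W_{1}\boxtimes_{P(z)}W_{2}$ itself --- which is a lower-bounded generalized surjective product of $W_{1}$ and $W_{2}$ via the surjective $\Y_{1_{W_{1}\hboxtr_{P(z)}W_{2}}}$ --- to get $C_{n}$-cofiniteness, and only then invoking Proposition \ref{C-2-gr-wk} to obtain quasi-finite-dimensionality, hence grading-restriction, of $W_{1}\boxtimes_{P(z)}W_{2}$ and therefore of its contragredient $W_{1}\hboxtr_{P(z)}W_{2}$. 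All the ingredients for this correct ordering already appear in your second paragraph (note that it needs only lower-boundedness, not grading-restriction, of $W_{1}\hboxtr_{P(z)}W_{2}$, since the contragredient of a lower-bounded generalized twisted module is defined), so the repair is a reordering rather than a new idea; but as written the quasi-finite-dimensionality claim for $W_{1}\hboxtr_{P(z)}W_{2}$ does not follow.
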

\begin{proof}
For any $P(z)$-product $(W_{3}, I)$, we have a module map $I': W_{3}'\to 
W_{1}\hboxtr_{P(z)}W_{2}$ given by $(I'(w_{3}'))(w_{1}\otimes w_{2})=
\langle w_{3}', I(w_{1}\otimes w_{2})\rangle$ for $w_{3}'\in W_{3}'$, $w_{1}\in W_{1}$, and
$w_{2}\in W_{2}$. Consider the image $I'(W_{3})\in W_{1}\hboxtr_{P(z)}W_{2}$ of $W_{3}'$ under $I'$. 
By Proposition \ref{mod-map-int-op}, the inclusion map $J: I'(W_{3})\to W_{1}\hboxtr_{P(z)}W_{2}$ gives a 
$P(z)$-intertwining operator $\Y_{J}$
of type $\binom{I'(W_{3})'}{W_{1} W_{2}}$.   Since $J$ is injective,  $\Y_{J}$ is surjective
By Proposition \ref{gradings-submod}, $(I'(W_{3})')_{[m]}=0$ when $\Re(m)<N_{0}$,
where $N_{0}\in \Z$ depends only on $W_{1}$ and $W_{2}$ and its existence 
 is given by Proposition \ref{gradings-submod}.  Hence $I'(W_{3})$ has the same property, that is, 
$I'(W_{3})_{[m]}=0$ when $\Re(m)<N_{0}$. 

By definition, $W_{1}\hboxtr_{P(z)}W_{2}$ is in fact the sum of $I'(W_{3})$ for all $P(z)$-products
of the form $(W_{3}, I)$. From the property $I'(W_{3})_{[m]}=0$ when $\Re(m)<N_{0}$,
the sum of all  such $I'(W_{3})$ satisfies the same property, that is, 
$(W_{1}\hboxtr_{P(z)}W_{2})_{[m]}=0$ when $\Re(m)<N_{0}$. Thus $W_{1}\hboxtr_{P(z)}W_{2}$ 
is in fact lower-bounded. So $W_{1}\boxtimes_{P(z)}W_{2}$  is also lower-bounded. 

The same argument shows that the  twisted intertwining operator $\Y_{1_{W_{1}\hboxtr_{P(z)}W_{2}}}$ 
given in Proposition \ref{mod-map-int-op}
is surjective. By Theorem \ref{n-W-1-W-2}, we see that $W_{1}\boxtimes_{P(z)}W_{2}$ 
is $C_{n}$-cofinite. Then by Proposition \ref{C-2-gr-wk}, $W_{1}\boxtimes_{P(z)}W_{2}$  
is quasi-finite-dimensional, and, in particular, is grading-restricted. As the contragredient of 
$W_{1}\boxtimes_{P(z)}W_{2}$, $W_{1}\hboxtr_{P(z)}W_{2}$ is also grading-restricted. 

Finally, we prove that $W_{1}\boxtimes_{P(z)}W_{2}$ together with the twisted $P(z)$-intertwining map 
$\boxtimes_{P(z)}$ is a $P(z)$-tensor product of $W_{1}$ and $W_{2}$. Given a $P(z)$-product 
$(W_{3}, I)$, we have a module map $I': W_{3}'\to W_{1}\hboxtr_{P(z)}W_{2}$ defined above. 
Then the adjoint $\eta: W_{1}\boxtimes_{P(z)}W_{2}\to W_{3}$ of $I'$ is also a module map. 
Let $\bar{\eta}: \overline{W_{1}\boxtimes_{P(z)}W_{2}}\to \overline{W}_{3}$ be the natural extension of $\eta$ 
Then by definitions, we have 
\begin{align}\label{hboxtr-closed-2}
\langle w_{3}',  \bar{\eta}(w_{1}\boxtimes_{P(z)} w_{2})\rangle =
\langle I'(w_{3}'), w_{1}\boxtimes_{P(z)} w_{2})\rangle 
= (I'(w_{3}'))(w_{1}\otimes w_{2})=\langle w_{3}', I(w_{1}\otimes w_{2})\rangle
\end{align}
for $w_{1}\in W_{1}$, $w_{2}\in W_{2}$, and $w_{3}'\in W_{3}'$.
So we obtain $\bar{\eta}(w_{1}\boxtimes_{P(z)} w_{2})=I(w_{1}\otimes w_{2})$. 
Since $\bar{\eta}(w_{1}\boxtimes_{P(z)} w_{2})$ can be rewritten as 
$(\bar{\eta}\circ \boxtimes_{P(z)})(w_{1}\otimes w_{2})$, we obtain 
$\bar{\eta}\circ \boxtimes_{P(z)}=I$. The uniqueness of $\eta$ follows from the definition of $\eta$. 
We have proved the universal property for $(W_{1}\boxtimes_{P(z)}W_{2}, \boxtimes_{P(z)})$.
So $(W_{1}\boxtimes_{P(z)}W_{2}, \boxtimes_{P(z)})$ is a $P(z)$-tensor product of $W_{1}$ and $W_{2}$. 
\end{proof}

Theorem \ref{hboxtr-closed} in fact assigns each object $(W_{1}, W_{2})$ in the category 
$\mathcal{C}_{n}^{G}\times \mathcal{C}_{n}^{G}$ an object $W_{1}\boxtimes_{P(z)}W_{2}$
in $\mathcal{C}_{n}^{G}$. As in \cite{DH}, we can also assign a morphism $(f_{1}, f_{2})$ in 
$\mathcal{C}_{n}^{G}\times \mathcal{C}_{n}^{G}$ a morphism $f_{1}\boxtimes_{P(z)}f_{2}$
in $\mathcal{C}_{n}^{G}$ in the same way by using the universal property of 
a $P(z)$-tensor product. We then also obtain  the following result whose proof is the same as the proof 
of Theorem 4.7 in \cite{DH}:

\begin{thm}\label{P-z-bifunctor}
The assignment, denoted by $\boxtimes_{P(z)}$, given by $(W_{1}, W_{2})\mapsto W_{1}\boxtimes_{P(z)}W_{2}$
and $(f_{1}, f_{2})\mapsto f_{1}\boxtimes_{P(z)}f_{2}$ is a functor from 
$\mathcal{C}_{n}^{G}\times \mathcal{C}_{n}^{G}$ to $\mathcal{C}_{n}^{G}$.
\end{thm}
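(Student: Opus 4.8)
The plan is to mirror the proof of Theorem 4.7 in \cite{DH}, adapting it to our setting where $\mathcal{C}_{n}^{G}$ has now been shown (Theorem \ref{hboxtr-closed}) to be closed under $\boxtimes_{P(z)}$ on objects. The object assignment is already given by Theorem \ref{hboxtr-closed}, so the content is to define $\boxtimes_{P(z)}$ on morphisms and verify functoriality. First I would fix a morphism $(f_{1},f_{2}) \colon (W_{1},W_{2}) \to (\widetilde{W}_{1},\widetilde{W}_{2})$ in $\mathcal{C}_{n}^{G}\times \mathcal{C}_{n}^{G}$, i.e.\ $V$-module maps $f_{i}\colon W_{i}\to \widetilde{W}_{i}$. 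The composite
$$\widetilde{\boxtimes}_{P(z)}\circ (f_{1}\otimes f_{2})\colon W_{1}\otimes W_{2}\to \overline{\widetilde{W}_{1}\boxtimes_{P(z)}\widetilde{W}_{2}}$$
is a twisted $P(z)$-intertwining map of type $\binom{\widetilde{W}_{1}\boxtimes_{P(z)}\widetilde{W}_{2}}{W_{1}W_{2}}$ (composing an intertwining map with module maps on the two inputs again yields an intertwining map — this is the twisted analogue of the corresponding fact in \cite{HLZ3}, and follows directly from the Jacobi identity and $L(0)$-, $L(-1)$-axioms in the definition of twisted intertwining operator). Hence $(\widetilde{W}_{1}\boxtimes_{P(z)}\widetilde{W}_{2}, \widetilde{\boxtimes}_{P(z)}\circ(f_{1}\otimes f_{2}))$ is a $P(z)$-product of $W_{1}$ and $W_{2}$ in $\mathcal{C}_{n}^{G}$, so by the universal property of $(W_{1}\boxtimes_{P(z)}W_{2},\boxtimes_{P(z)})$ proved in Theorem \ref{hboxtr-closed}, there is a unique module map
$$f_{1}\boxtimes_{P(z)}f_{2}\colon W_{1}\boxtimes_{P(z)}W_{2}\to \widetilde{W}_{1}\boxtimes_{P(z)}\widetilde{W}_{2}$$
with $\overline{f_{1}\boxtimes_{P(z)}f_{2}}\circ \boxtimes_{P(z)} = \widetilde{\boxtimes}_{P(z)}\circ(f_{1}\otimes f_{2})$. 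This defines the assignment on morphisms.

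Next I would check the two functoriality axioms. For identities: $(1_{W_{1}},1_{W_{2}})$ gives $\widetilde{\boxtimes}_{P(z)}\circ(1_{W_{1}}\otimes 1_{W_{2}}) = \boxtimes_{P(z)}$, and since $1_{W_{1}\boxtimes_{P(z)}W_{2}}$ also satisfies the defining commutative diagram $\overline{1}\circ\boxtimes_{P(z)} = \boxtimes_{P(z)}$, the uniqueness clause forces $1_{W_{1}}\boxtimes_{P(z)}1_{W_{2}} = 1_{W_{1}\boxtimes_{P(z)}W_{2}}$. For composition: given $(f_{1},f_{2})\colon (W_{1},W_{2})\to(\widetilde{W}_{1},\widetilde{W}_{2})$ and $(g_{1},g_{2})\colon (\widetilde{W}_{1},\widetilde{W}_{2})\to(\widehat{W}_{1},\widehat{W}_{2})$, both $(g_{1}\circ f_{1})\boxtimes_{P(z)}(g_{2}\circ f_{2})$ and $(g_{1}\boxtimes_{P(z)}g_{2})\circ(f_{1}\boxtimes_{P(z)}f_{2})$ are module maps $W_{1}\boxtimes_{P(z)}W_{2}\to\widehat{W}_{1}\boxtimes_{P(z)}\widehat{W}_{2}$, and a short diagram chase — using $\overline{g_{1}\boxtimes_{P(z)}g_{2}}\circ\widetilde{\boxtimes}_{P(z)} = \widehat{\boxtimes}_{P(z)}\circ(g_{1}\otimes g_{2})$ composed with $f_{1}\otimes f_{2}$, and then the relation for $f_{1}\boxtimes_{P(z)}f_{2}$ — shows both composites satisfy the same universal-property diagram $(\ \cdot\ )\circ\boxtimes_{P(z)} = \widehat{\boxtimes}_{P(z)}\circ(g_{1}\circ f_{1}\otimes g_{2}\circ f_{2})$. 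By the uniqueness in that universal property they agree. This establishes that $\boxtimes_{P(z)}$ is a bifunctor.

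I do not anticipate a genuine obstacle: once Theorem \ref{hboxtr-closed} secures that $\boxtimes_{P(z)}$ of objects in $\mathcal{C}_{n}^{G}$ stays in $\mathcal{C}_{n}^{G}$ and that the universal property holds, everything here is formal — the same categorical argument as for any tensor-product construction defined by a universal mapping property. The only point requiring a (routine) verification is that precomposing a twisted $P(z)$-intertwining map with module maps on the inputs yields again a twisted $P(z)$-intertwining map; this is the twisted version of the standard fact, and since the Jacobi identity \eqref{jacobi-twisted-int-0} and the $L(0)$- and $L(-1)$-axioms for $\Y$ are all compatible with composing with module maps (which by definition commute with $Y_{W_{i}}$, $L_{W_{i}}(0)$, $L_{W_{i}}(-1)$, and with the $G$-action), it goes through verbatim as in \cite{DH}. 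Thus the proof reduces to citing Theorem \ref{hboxtr-closed} and invoking the proof of Theorem 4.7 in \cite{DH}, which is exactly what the statement already indicates.
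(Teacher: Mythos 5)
Your proposal is correct and follows the same route as the paper: the paper defines $f_{1}\boxtimes_{P(z)}f_{2}$ via the universal property established in Theorem \ref{hboxtr-closed} and then refers the functoriality verification to the proof of Theorem 4.7 in \cite{DH}, which is exactly the formal argument you spell out. No gaps.
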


\subsection{$P(z)$-tensor product bifunctor for $\widetilde{\mathcal{C}}_{n}^{G}$}

In this subsection, for $z\in \C^{\times}$, 
we give a $P(z)$-tensor product bifunctor for the category $\widetilde{\mathcal{C}}_{n}^{G}$
 of $C_{n}$-cofinite grading-restricted generalized $g$-twisted $V$-modules
with $G$-actions for $g\in G$. 

Let $g_{1}, g_{2}\in G$. 
Given $C_{n}$-cofinite grading-restricted generalized $g_{1}$- and $g_{2}$-twisted $V$-modules
$W_{1}$ and $W_{2}$, respectively, for $g_{1}, g_{2}\in G$. 
Since $W_{1}$ and $W_{2}$ are $G$-modules, $(W_{1}\otimes W_{2})^{*}$ is also a $G$-module. 
In particular, $(W_{1}\otimes W_{2})^{*}$ has an action of $(g_{1}g_{2})^{-1}$. 

Given a $P(z)$-product $(W_{3}, I)$ of $W_{1}$ and $W_{2}$  in $\widetilde{\mathcal{C}}_{n}^{G}$,
for $w_{3}'\in W_{3}'$, we have $\lambda_{I, w_{3}'}\in 
(W_{1}\otimes W_{2})^{*}$ defined by \eqref{lambda-I-w-3-'}.
Let $W_{1}\hboxtr_{P(z)}W_{2}$ be the subspace of 
 $(W_{1}\otimes W_{2})^{*}$ spanned by
$\lambda_{I, w_{3}'}$ for all $P(z)$-products $(W_{3}, I)$
and $w_{3}'\in W_{3}'$. Note that in general, $W_{1}\hboxtr_{P(z)}W_{2}$ here
might be different from $W_{1}\hboxtr_{P(z)}W_{2}$ in the case that 
$W_{1}$ and $W_{2}$ are viewed as grading-restricted generalized $g$-twisted $V$-modules
without $g$ and $G$-actions. For simplicity, we use the same notation $\hboxtr_{P(z)}$, but we will call
them $W_{1}\hboxtr_{P(z)}W_{2}$ in $\widetilde{\mathcal{C}}_{n}^{G}$ and
$W_{1}\hboxtr_{P(z)}W_{2}$ in $\mathcal{C}_{n}^{G}$ to distinguish them when it is necessary. 

As discussed above, $G$ acts on $(W_{1}\otimes W_{2})^{*}$.
By the definition of twisted $P(z)$-intertwining map
in the category $\widetilde{\mathcal{C}}_{n}^{G}$, 
for a $P(z)$-product $(W_{3}, I)$ of $W_{1}$ and $W_{2}$  in $\widetilde{\mathcal{C}}_{n}^{G}$,
and  $w_{3}'\in (W_{3}')^{[\alpha]}$, we have 
\begin{align*}
(h\lambda_{I, w_{3}'})(w_{1}\otimes w_{2})
&=\lambda_{I, w_{3}'}(h^{-1}(w_{1}\otimes w_{2}))\nn
&=\lambda_{I, w_{3}'}(h^{-1}w_{1}\otimes h^{-1}w_{2})\nn
&=\langle w_{3}', I(h^{-1}w_{1}\otimes h^{-1}w_{2})\rangle\nn
&=\langle w_{3}', h^{-1}I(w_{1}\otimes w_{2})\rangle\nn
&=\langle hw_{3}', I(w_{1}\otimes w_{2})\rangle\nn
&=\lambda_{I, hw_{3}'}(w_{1}\otimes w_{2})
\end{align*}
for $w_{1}\in W_{1}$ and $w_{2}\in W_{2}$.  So we obtain 
$h\lambda_{I, w_{3}'}=\lambda_{I, hw_{3}'}$, which means that 
$h\lambda_{I, w_{3}'}\in W_{1}\hboxtr_{P(z)}W_{2}$. Thus  the  $G$-action on $(W_{1}\otimes W_{2})^{*}$
induces a $G$-action on $W_{1}\hboxtr_{P(z)}W_{2}$.

Taking $h=(g_{1}g_{2})^{-1}$ in $h\lambda_{I, w_{3}'}=\lambda_{I, hw_{3}'}$, we obtain
$((g_{1}g_{2})^{-1}\lambda_{I, w_{3}'})
=\lambda_{I, (g_{1}g_{2})^{-1}w_{3}'}$.
Then we have
$$((g_{1}g_{2})^{-1}-e^{2\pi \i\alpha})^{k}\lambda_{I, w_{3}'}
=\lambda_{I, (g_{1}g_{2})^{-1}-e^{2\pi \i\alpha})^{k}w_{3}'}$$
for $k\in \Z_{+}$. Since $w_{3}'\in (W_{3}')^{[\alpha]}$, there exists $k\in \Z_{+}$ such that
$((g_{1}g_{2})^{-1}-e^{2\pi \i\alpha})^{k}w_{3}'=0$. So  we obtain
$((g_{1}g_{2})^{-1}-e^{2\pi \i\alpha})^{k}\lambda_{I, w_{3}'}=0$. 
Let $(W_{1}\hboxtr_{P(z)}W_{2})^{[\alpha]}$ be the generalized eigenspace of 
the action of $(g_{1}g_{2})^{-1}$ on $W_{1}\hboxtr_{P(z)}W_{2}$ with eigenvalue 
$e^{2\pi \i\alpha}$. We have proved that $\lambda_{I, w_{3}'}\in (W_{1}\hboxtr_{P(z)}W_{2})^{[\alpha]}$. 
Since $W_{1}\hboxtr_{P(z)}W_{2}$ is spanned by elements of the form 
$\lambda_{I, w_{3}'}$, $W_{1}\hboxtr_{P(z)}W_{2}$ is a direct sum of 
generalized eigenspaces of the action of $(g_{1}g_{2})^{-1}$ on $W_{1}\hboxtr_{P(z)}W_{2}$,
that is, 
$$W_{1}\hboxtr_{P(z)}W_{2}=\coprod_{\alpha+\Z\in \C/\Z}
(W_{1}\hboxtr_{P(z)}W_{2})^{[\alpha]}.$$

We define a vertex operator map 
$$Y_{W_{1}\hboxtr_{P(z)}W_{2}}:
V\otimes (W_{1}\hboxtr_{P(z)}W_{2})\to 
(W_{1}\hboxtr_{P(z)}W_{2})\{x\}[\log x]$$
using the same formula \eqref{Y-W-1-W_2} as in the case 
in the category $\mathcal{C}_{n}^{G}$. 
We consider the identity operator $1_{W_{1}\hboxtr_{P(z)}W_{2}}: 
W_{1}\hboxtr_{P(z)}W_{2}\to W_{1}\hboxtr_{P(z)}W_{2}$ on 
$W_{1}\hboxtr_{P(z)}W_{2}$.
By taking $f=1_{W_{1}\hboxtr_{P(z)}W_{2}}$ in Proposition \ref{mod-map-int-op}, we obtain a 
twisted $P(z)$-intertwining map $1_{W_{1}\hboxtr_{P(z)}W_{2}}'$ in the category 
$\mathcal{C}_{n}^{G}$. But note that
$W_{1}\hboxtr_{P(z)}W_{2}$ in this subsection might be 
different from $W_{1}\hboxtr_{P(z)}W_{2}$ in the preceding subsection.
This twisted $P(z)$-intertwining map $\boxtimes_{P(z)}$ in the category $\mathcal{C}_{n}^{G}$
might also be different from the twisted $P(z)$-intertwining map $\boxtimes_{P(z)}$ obtained in 
the last part of Proposition \ref{mod-map-int-op}. For simplicity, we will use the same 
notation $\boxtimes_{P(z)}$ to denote $1_{W_{1}\hboxtr_{P(z)}W_{2}}'$. 
We also have $w_{1}\boxtimes_{P(z)} w_{2}$ of $w_{1}\in W_{1}$ and $w_{2}\in W_{2}$
defined to be $\boxtimes_{P(z)}(w_{1}\otimes w_{2})$. 
Proposition \ref{mod-map-int-op} does not tell us whether 
$\boxtimes_{P(z)}$ is a twisted $P(z)$-intertwining map  in the category $\widetilde{\mathcal{C}}_{n}^{G}$.
But we have 
\begin{align*}
\langle \lambda, h(w_{1}\boxtimes_{P(z)} w_{2})\rangle
&=\langle h^{-1}\lambda, w_{1}\boxtimes_{P(z)} w_{2}\rangle\nn
&=(h^{-1}\lambda)(w_{1}\otimes w_{2})\nn
&=\lambda(h(w_{1}\otimes w_{2}))\nn
&=\lambda(hw_{1}\otimes hw_{2})\nn
&=\langle \lambda, hw_{1}\boxtimes_{P(z)} hw_{2}\rangle
\end{align*}
for $\lambda \in W_{1}\hboxtr_{P(z)}W_{2}$ in $\widetilde{\mathcal{C}}_{n}^{G}$, 
$w_{1}\in W_{1}$, and $W_{2}\in W_{2}$. 
Thus we obtain 
$$h(w_{1}\boxtimes_{P(z)} w_{2})=hw_{1}\boxtimes_{P(z)} hw_{2}$$
for $h\in G$, $w_{1}\in W_{1}$ and $w_{2}\in W_{2}$. 
So  $\boxtimes_{P(z)}$  is in fact a twisted $P(z)$-intertwining map 
in the category $\widetilde{\mathcal{C}}_{n}^{G}$. In particular, the twisted 
intertwining operator $\Y_{1_{W_{1}\hboxtr_{P(z)}W_{2}}}$ is a twisted intertwining operator 
in the category $\widetilde{\mathcal{C}}_{n}^{G}$. 

\begin{thm}\label{hboxtr-closed-tilde-C}
Let $g_{1}, g_{2}$ be commuting automorphisms of $V$, 
$W_{1}$, $W_{2}$, $W_{3}$ grading-restricted generalized $g_{1}$-, $g_{2}$-,
$g_{1}g_{2}$-twisted $V$-modules, respectively, in the category $\widetilde{\mathcal{C}}_{n}^{G}$
and  $z\in \C^{\times}$. Then $W_{1}\hboxtr_{P(z)}W_{2}$ is a 
grading-restricted generalized $(g_{1}g_{2})^{-1}$-twisted $V$-module with 
a $G$-action and its contragredient 
$W_{1}\boxtimes_{P(z)}W_{2}=(W_{1}\hboxtr_{P(z)}W_{2})'$ is a $C_{n}$-cofinite grading-restricted generalized 
$g$-twisted $V$-module with a $G$-action.  Moreover, $(W_{1}\boxtimes_{P(z)}W_{2}, \boxtimes_{P(z)})$ is a 
$P(z)$-tensor product of $W_{1}$ and $W_{2}$ in the category $\widetilde{\mathcal{C}}_{n}^{G}$. 
\end{thm}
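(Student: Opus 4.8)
The plan is to mirror the proof of Theorem \ref{hboxtr-closed} almost verbatim, adding the bookkeeping needed to carry $G$-actions through every construction. First I would verify that $W_{1}\hboxtr_{P(z)}W_{2}$ (the version in $\widetilde{\mathcal{C}}_{n}^{G}$) is lower-bounded. The key point is that Proposition \ref{gradings-submod} only requires $W_{1}$ and $W_{2}$ to be $C_{n}$-cofinite grading-restricted generalized twisted modules, with no reference to $G$-actions; so it applies to the underlying twisted modules here and produces a single $N_{0}\in \Z$ such that every lower-bounded generalized surjective product $W_{3}$ of $W_{1}$ and $W_{2}$ satisfies $(W_{3})_{[m]}=0$ for $\Re(m)<N_{0}$. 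As in the proof of Theorem \ref{hboxtr-closed}, for each $P(z)$-product $(W_{3}, I)$ in $\widetilde{\mathcal{C}}_{n}^{G}$ the map $I': W_{3}'\to W_{1}\hboxtr_{P(z)}W_{2}$ has image $I'(W_{3}')$ with $(I'(W_{3}'))_{[m]}=0$ for $\Re(m)<N_{0}$ (using Proposition \ref{mod-map-int-op} and the surjectivity of $\Y_{J}$ for the inclusion $J$), and since $W_{1}\hboxtr_{P(z)}W_{2}$ is the sum of all such images, it is lower-bounded. Hence $W_{1}\boxtimes_{P(z)}W_{2}$ is lower-bounded.

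Next I would establish $C_{n}$-cofiniteness and grading-restriction. By the discussion preceding the theorem, $\boxtimes_{P(z)}=1_{W_{1}\hboxtr_{P(z)}W_{2}}'$ is a twisted $P(z)$-intertwining map in $\widetilde{\mathcal{C}}_{n}^{G}$ and $\Y_{1_{W_{1}\hboxtr_{P(z)}W_{2}}}$ is the corresponding twisted intertwining operator; the same argument as in Theorem \ref{hboxtr-closed} shows it is surjective. Then Theorem \ref{n-W-1-W-2} with $p=q=n$ gives $\dim(W_{1}\boxtimes_{P(z)}W_{2}/C_{n}(W_{1}\boxtimes_{P(z)}W_{2}))<\infty$, and Proposition \ref{C-2-gr-wk} (Property 1) upgrades this to quasi-finite-dimensionality, hence grading-restriction, of $W_{1}\boxtimes_{P(z)}W_{2}$; consequently $W_{1}\hboxtr_{P(z)}W_{2}$, being its contragredient, is also grading-restricted. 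The $G$-action: I already have (from the paragraphs before the theorem) that the $G$-action on $(W_{1}\otimes W_{2})^{*}$ restricts to one on $W_{1}\hboxtr_{P(z)}W_{2}$ satisfying $h\lambda_{I, w_{3}'}=\lambda_{I, hw_{3}'}$, that this action commutes with $Y_{W_{1}\hboxtr_{P(z)}W_{2}}$ via \eqref{Y-W-1-W_2} and the $G$-compatibility of $Y_{W_{3}'}$, and that it is compatible with $L_{W_{1}\hboxtr_{P(z)}W_{2}}(0)$ and $L_{W_{1}\hboxtr_{P(z)}W_{2}}(-1)$; I would note it commutes with $L(1)$ as well since $V$ is a M\"{o}bius vertex algebra and automorphisms commute with $L_{V}(1)$. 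The $g$-grading condition for $W_{1}\hboxtr_{P(z)}W_{2}$ as a $(g_{1}g_{2})^{-1}$-twisted module with a $G$-action follows from the direct-sum decomposition $W_{1}\hboxtr_{P(z)}W_{2}=\coprod_{\alpha+\Z}(W_{1}\hboxtr_{P(z)}W_{2})^{[\alpha]}$ already derived. Passing to the contragredient $W_{1}\boxtimes_{P(z)}W_{2}$, the dual $G$-action makes it a $C_{n}$-cofinite grading-restricted generalized $g$-twisted $V$-module with a $G$-action.

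Finally, for the universal property in $\widetilde{\mathcal{C}}_{n}^{G}$: given any $P(z)$-product $(W_{3}, I)$ in $\widetilde{\mathcal{C}}_{n}^{G}$, the map $I': W_{3}'\to W_{1}\hboxtr_{P(z)}W_{2}$ is a $V$-module map, and since $I$ satisfies $h I(w_{1}\otimes w_{2})=I(hw_{1}\otimes hw_{2})$, the computation $h\lambda_{I,w_{3}'}=\lambda_{I,hw_{3}'}$ shows $I'$ is also a $G$-module map. Its adjoint $\eta: W_{1}\boxtimes_{P(z)}W_{2}\to W_{3}$ is then a morphism in $\widetilde{\mathcal{C}}_{n}^{G}$, and the identity $\bar\eta\circ\boxtimes_{P(z)}=I$ together with uniqueness of $\eta$ follows exactly as in the proof of Theorem \ref{hboxtr-closed} via the pairing computation \eqref{hboxtr-closed-2}. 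The main obstacle — really the only nonroutine point — is making sure that Propositions \ref{gradings-submod} and \ref{mod-map-int-op} and Theorem \ref{n-W-1-W-2} genuinely apply to the $\widetilde{\mathcal{C}}_{n}^{G}$ setting, i.e. that replacing "without $g$-actions" by "with $G$-actions" does not disturb the constructions; this is true because those results are statements about the underlying twisted modules and intertwining operators, and the $G$-equivariance is an extra structure that is preserved functorially by contragredients, images, inclusions, and adjoints. After proving the object-level statement, the functoriality (analogue of Theorem \ref{P-z-bifunctor}) follows by the same universal-property argument as in \cite{DH}, now checking $G$-equivariance of the induced maps $f_{1}\boxtimes_{P(z)}f_{2}$, which is immediate from the definition via the universal property.
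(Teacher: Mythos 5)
Your proposal is correct, but it takes a slightly different route to grading-restriction than the paper does. You re-run the whole argument of Theorem \ref{hboxtr-closed} inside $\widetilde{\mathcal{C}}_{n}^{G}$: uniform lower-boundedness from Proposition \ref{gradings-submod} applied to the underlying twisted modules, then $C_{n}$-cofiniteness from the surjectivity of $\Y_{1_{W_{1}\hboxtr_{P(z)}W_{2}}}$ and Theorem \ref{n-W-1-W-2}, then quasi-finite-dimensionality (hence grading-restriction) from Proposition \ref{C-2-gr-wk}. The paper instead observes that every twisted $P(z)$-intertwining map in $\widetilde{\mathcal{C}}_{n}^{G}$ is in particular one in $\mathcal{C}_{n}^{G}$, so the $W_{1}\hboxtr_{P(z)}W_{2}$ of Subsection 5.2 is a generalized $(g_{1}g_{2})^{-1}$-twisted $V$-submodule of the one already treated in Theorem \ref{hboxtr-closed}, and grading-restriction is inherited immediately; only the $C_{n}$-cofiniteness and the universal property are then argued afresh. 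Your version is more self-contained but duplicates the lower-boundedness argument; the paper's is shorter because it exploits the containment between the two constructions. Both are valid, and your explicit checks that $I'$ and hence its adjoint $\eta$ are $G$-module maps (via $h\lambda_{I,w_{3}'}=\lambda_{I,hw_{3}'}$) correctly fill in what the paper leaves implicit when it says the universal-property argument is "the same" as in Theorem \ref{hboxtr-closed}.
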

\begin{proof}
Since twisted $P(z)$-intertwining maps 
in the category $\widetilde{\mathcal{C}}_{n}^{G}$ are also twisted $P(z)$-intertwining maps 
in the category $\mathcal{C}_{n}^{G}$, $W_{1}\hboxtr_{P(z)}W_{2}$ in 
this subsection is a generalized $(g_{1}g_{1})^{-1}$-twisted 
$V$-submodule of what is denoted using the same notation 
$W_{1}\hboxtr_{P(z)}W_{2}$ in the preceding subsection. By Theorem \ref{hboxtr-closed},
$W_{1}\hboxtr_{P(z)}W_{2}$ in  the preceding subsection is grading-restricted, its 
generalized $(g_{1}g_{1})^{-1}$-twisted 
$V$-submodule $W_{1}\hboxtr_{P(z)}W_{2}$ in this subsection is also grading-restricted. 
Thus its contragredient $W_{1}\boxtimes_{P(z)}W_{2}$ is also grading-restricted. 

Since $1_{W_{1}\hboxtr_{P(z)}W_{2}}$ is injective, the twisted intertwining operator 
$\Y_{1_{W_{1}\hboxtr_{P(z)}W_{2}}}$ is surjective. Thus by Theorem \ref{n-W-1-W-2},
$W_{1}\boxtimes_{P(z)}W_{2}$ is $C_{n}$-cofinite and is thus in $\widetilde{\mathcal{C}}_{n}^{G}$.
Now the same argument as in the last part of the proof of 
Theorem \ref{hboxtr-closed} shows that 
$(W_{1}\boxtimes_{P(z)}W_{2}, \boxtimes_{P(z)})$ is a 
$P(z)$-tensor product of $W_{1}$ and $W_{2}$ in the category $\widetilde{\mathcal{C}}_{n}^{G}$. 
\end{proof}

We can also define $f_{1}\boxtimes_{P(x)}f_{2}$ for two morphisms $f_{1}$ and $f_{2}$ in 
$\widetilde{\mathcal{C}}_{n}^{G}$. 
Then the following result is obtained in the same way as Theorem \ref{P-z-bifunctor}:

\begin{thm}
The assignment, denoted by $\boxtimes_{P(z)}$, given by $(W_{1}, W_{2})\mapsto W_{1}\boxtimes_{P(z)}W_{2}$
and $(f_{1}, f_{2})\mapsto f_{1}\boxtimes_{P(z)}f_{2}$ is a functor from 
$\widetilde{\mathcal{C}}_{n}^{G}\times \widetilde{\mathcal{C}}_{n}^{G}$ to $\widetilde{\mathcal{C}}_{n}^{G}$.
\end{thm}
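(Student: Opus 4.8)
The plan is to mirror the proof of Theorem~\ref{P-z-bifunctor} (equivalently the proof of Theorem 4.7 in~\cite{DH}). By Theorem~\ref{hboxtr-closed-tilde-C} the object $W_{1}\boxtimes_{P(z)}W_{2}$ is already known to lie in $\widetilde{\mathcal{C}}_{n}^{G}$ and, together with the twisted $P(z)$-intertwining map $\boxtimes_{P(z)}$, to satisfy the universal property of a $P(z)$-tensor product in that category. So the assignment on objects is already well defined, and it remains only to define the assignment on morphisms and to verify the two functoriality axioms; both follow formally from that universal property.

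First I would define $f_{1}\boxtimes_{P(z)}f_{2}$ for morphisms $f_{i}\colon W_{i}\to \widetilde{W}_{i}$ in $\widetilde{\mathcal{C}}_{n}^{G}$. Let $\Y_{\boxtimes}$ be the twisted intertwining operator of type $\binom{\widetilde{W}_{1}\boxtimes_{P(z)}\widetilde{W}_{2}}{\widetilde{W}_{1}\widetilde{W}_{2}}$ in $\widetilde{\mathcal{C}}_{n}^{G}$ whose $0$-th value at $z$ is $\boxtimes_{P(z)}$ (obtained by applying Proposition~\ref{mod-map-int-op} to the identity map of the $\widetilde{\mathcal{C}}_{n}^{G}$-version of $W_{1}\hboxtr_{P(z)}W_{2}$). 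The first step is to check that $\Y_{\boxtimes}(f_{1}(\cdot),x)f_{2}(\cdot)$ is again a twisted intertwining operator, now of type $\binom{\widetilde{W}_{1}\boxtimes_{P(z)}\widetilde{W}_{2}}{W_{1}W_{2}}$: the lower-truncation property, the Jacobi identity~\eqref{jacobi-twisted-int-0}, the $L(0)$-commutator formula, and the $L(-1)$-derivative property all transport immediately because $f_{1}$ and $f_{2}$ intertwine the twisted vertex operators and the operators $L(0)$ and $L(-1)$, and because $f_{1},f_{2}$ are $G$-module maps the extra condition $h\Y(w_{1},x)w_{2}=\Y(hw_{1},x)hw_{2}$ is inherited as well. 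Taking the $0$-th value at $z$ then gives a twisted $P(z)$-intertwining map, so $(\widetilde{W}_{1}\boxtimes_{P(z)}\widetilde{W}_{2},\,\Y_{\boxtimes}(f_{1}(\cdot),z)f_{2}(\cdot))$ is a $P(z)$-product of $W_{1}$ and $W_{2}$ in $\widetilde{\mathcal{C}}_{n}^{G}$. The universal property of $(W_{1}\boxtimes_{P(z)}W_{2},\boxtimes_{P(z)})$ then yields a unique module map $f_{1}\boxtimes_{P(z)}f_{2}\colon W_{1}\boxtimes_{P(z)}W_{2}\to \widetilde{W}_{1}\boxtimes_{P(z)}\widetilde{W}_{2}$ in $\widetilde{\mathcal{C}}_{n}^{G}$, characterised by $(f_{1}\boxtimes_{P(z)}f_{2})(w_{1}\boxtimes_{P(z)}w_{2})=f_{1}(w_{1})\boxtimes_{P(z)}f_{2}(w_{2})$.

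The two functoriality axioms then follow from uniqueness in this universal property. For identities, both $1_{W_{1}}\boxtimes_{P(z)}1_{W_{2}}$ and $1_{W_{1}\boxtimes_{P(z)}W_{2}}$ satisfy the defining commutative diagram of the $P(z)$-product $(W_{1}\boxtimes_{P(z)}W_{2},\boxtimes_{P(z)})$ taken with $I=\boxtimes_{P(z)}$, hence they coincide. For composition, given further morphisms $g_{i}\colon \widetilde{W}_{i}\to \widehat{W}_{i}$ in $\widetilde{\mathcal{C}}_{n}^{G}$, both $(g_{1}\boxtimes_{P(z)}g_{2})\circ(f_{1}\boxtimes_{P(z)}f_{2})$ and $(g_{1}\circ f_{1})\boxtimes_{P(z)}(g_{2}\circ f_{2})$ send $w_{1}\boxtimes_{P(z)}w_{2}$ to $g_{1}(f_{1}(w_{1}))\boxtimes_{P(z)}g_{2}(f_{2}(w_{2}))$, so both satisfy the universal property characterising the latter and are therefore equal.

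I expect the only genuine subtlety---the point where this proof differs from the $\mathcal{C}_{n}^{G}$ case treated in Theorem~\ref{P-z-bifunctor}---to be keeping track of the fact, noted just before Theorem~\ref{hboxtr-closed-tilde-C}, that $W_{1}\hboxtr_{P(z)}W_{2}$ and $\boxtimes_{P(z)}$ in $\widetilde{\mathcal{C}}_{n}^{G}$ are in general not the same as the objects denoted by the same symbols in $\mathcal{C}_{n}^{G}$. So throughout one must use the $G$-equivariant versions consistently, and in particular verify that $\Y_{\boxtimes}(f_{1}(\cdot),x)f_{2}(\cdot)$ stays $G$-equivariant; this is handled by the $G$-equivariance of the twisted intertwining operators in $\widetilde{\mathcal{C}}_{n}^{G}$ recorded before Theorem~\ref{hboxtr-closed-tilde-C}. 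With this proviso the argument runs exactly as the proof of Theorem 4.7 in~\cite{DH}.
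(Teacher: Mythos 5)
Your proposal is correct and follows essentially the same route as the paper, which simply defers to the argument of Theorem~4.7 in \cite{DH}: define $f_{1}\boxtimes_{P(z)}f_{2}$ via the universal property established in Theorem~\ref{hboxtr-closed-tilde-C} and deduce functoriality from the uniqueness clause of that property. Your explicit attention to the $G$-equivariance of $\Y_{\boxtimes}(f_{1}(\cdot),x)f_{2}(\cdot)$ and to the distinction between the $\widetilde{\mathcal{C}}_{n}^{G}$- and $\mathcal{C}_{n}^{G}$-versions of $\hboxtr_{P(z)}$ is exactly the point one must track, and you handle it correctly.
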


\noindent {\small \sc Department of Mathematics, Rutgers University,
110 Frelinghuysen Rd., Piscataway, NJ 08854-8019}

\noindent {\em E-mail address}: yzhuang@math.rutgers.edu

\end{document}